\newtheorem{theorem}{Theorem}[section]
\newtheorem{lemma}[theorem]{Lemma}
\newtheorem{proposition}[theorem]{Proposition}
\newtheorem{Proposition}[theorem]{Proposition}
\theoremstyle{definition}
\newtheorem{definition}[theorem]{Definition}
\theoremstyle{remark}
\newtheorem{remark}[theorem]{Remark}
\theoremstyle{example}
\def\beq{ \begin{equation} }
\def\eeq{ \end{equation} }
\def\mn{\medskip\noindent}
\def\ep{\epsilon}
\def\square{\vcenter{\vbox{\hrule height .4pt
  \hbox{\vrule width .4pt height 5pt \kern 5pt
        \vrule width .4pt} \hrule height .4pt}}}
\def\RR{\mathbb{R}}
\def\ZZ{\mathbb{Z}}
\def\BP{P}
\newcommand{\upspace}{\mathbb{H}}
\newcommand{\harm}{\mathcal{H}}
\newcommand{\sharm}{\bar{\mathcal{H}}}
\newcommand{\asharm}{\widetilde{\mathcal{H}}}
\newcommand{\bharm}{\mathsf{H}}
\numberwithin{equation}{section} 
\begin{document}
\title[]{Stationary Harmonic Measure as the Scaling Limit of Truncated Harmonic Measure}
\author{Eviatar B. Procaccia}
\address[Eviatar B. Procaccia\footnote{Research supported by NSF grant DMS-1812009.}]{Texas A\&M University}
\urladdr{www.math.tamu.edu/~procaccia}
\email{eviatarp@gmail.com}
 
\author{Jiayan Ye}
\address[Jiayan Ye]{Texas A\&M University}
\urladdr{http://www.math.tamu.edu/~tomye}
\email{tomye@math.tamu.edu }

\author{Yuan Zhang}
\address[Yuan Zhang]{Peking University}
\email{zhangyuan@math.pku.edu.cn}

\begin{abstract}
In this paper we prove that the stationary harmonic measure of an infinite set in the upper planar lattice can be represented as the proper scaling limit of the classical harmonic measure of truncations of the infinite set.
\end{abstract}

\maketitle
\tableofcontents
\section{Introduction}
Motivated by the study of Diffusion Limited Aggregation (DLA) on graphs with absorbing boundaries, recently \cite{procaccia2017stationary,procaccia2017sets} an appropriate harmonic measure was defined on the upper planar lattice. The so called stationary harmonic measure is a natural growth measure for DLA in the upper planar lattice. In \cite{procaccia2017stationary} a finite DLA process on the upper planar lattice was defined and studied. Moreover an infinite stationary process that bounds from above any process generated by the stationary harmonic measure was defined. However the most interesting process we wish to study is an infinite stationary DLA. Proving that the stationary DLA is well defined seems to be quite challenging. This paper makes a big contribution in this direction connecting the stationary harmonic measure and the correct scaling of the harmonic measure. A dynamical version of our result will allow the construction of the SDLA. Once defined many geometric results on the SDLA will follow from general theory developed in recent study of stationary aggregation processes e.g. stationary Eden model \cite{Berger-Kagan-Procaccia} and stationary internal DLA \cite{2017stationary}.

\subsection{Notations and Definitions}

Let $\upspace = \{ (x, y) \in \mathbb{Z}^2: y \geq 0 \}$ be the upper half plane including the the x-axis, and $S_n$, $n \geq 0$ be  a $2$-dimensional simple random walk. For any $x \in \upspace$, we write
$$
x = (x^{(1)}, x^{(2)}),
$$
where $x^{(i)}$ denotes the i-th coordinate of $x$. For each $n \geq 0$, define the subsets $L_n \subset \upspace$ as follows:
$$
L_n = \{ (x, n) : x \in \mathbb{Z} \},
$$
i.e. $L_n$ is the horizontal line of height $n$. For each subset $A \subset \upspace$, we define the stopping times
$$
\tau_A = \min \{ n \geq 1: S_n \in A \},
$$
and
$$
\overline{\tau}_A = \min \{ n \geq 0: S_n \in A \}.
$$
 For any $R > 0$, let $B(0,R) = \{ x \in \mathbb{Z}^2 : ||x||_2 < R \}$ be the discrete ball of radius $R$, and abbreviate 
$$
\tau_{R}=\tau_{B(0,R)}, \   \bar\tau_{R}=\bar\tau_{B(0,R)}.
$$
Let $||\cdot||_1$ be the $l_1$ norm. We define
$$
\partial^{\text{out}} A := \{ y \in \upspace \setminus A: \exists x \in A, ||x - y||_1 =1 \}
$$
as the outer vertex boundary of $A$, and define
$$
\partial^{\text{in}} A := \{ y \in A: \exists x \in \upspace \setminus A, ||x - y||_1 =1 \}
$$
as the inner vertex boundary of $A$. Let $P_x(\cdot) = P(\cdot | S_0 = x)$. The stationary harmonic measure $\sharm_A $ on $\upspace$ is introduced in \cite{procaccia2017stationary}. Let $A \subset \upspace$ be a connected set. For any edge $e = (x,y)$ with $x \in A$ and $y \in \upspace \setminus A$, define
$$
\sharm_{A,N}(e) = \sum_{z \in L_N \setminus A} P_z \big( S_{\bar{\tau}_{A \cup L_0}} =x, S_{\bar{\tau}_{A \cup L_0} -1} = y \big).
$$
Note that $\sharm_{A,N}(e) > 0$ if and only if $x \in \partial^{in} A$ and $||x - y||_1 =1$. For all $x \in A$, define
$$
\sharm_{A,N}(x) = \sum_{e \text{ starting from } x} \sharm_{A,N}(e), 
$$
and for all $y \in \upspace \setminus A$, define
$$
\hat{\harm}_{A,N}(y) = \sum_{e \text{ starting in } A \text{ ending at } y} \sharm_{A,N}(e).
$$
\begin{proposition} [Proposition $1$ in \cite{procaccia2017stationary}]
	\label{pp}
For any $A$ and $e$ above, there is a finite $\sharm_A (e)$ such that
$$
\lim_{N \rightarrow \infty} \sharm_{A,N}(e) = \sharm_A (e).
$$
\end{proposition}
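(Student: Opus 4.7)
The plan is to establish that $\{\sharm_{A,N}(e)\}_{N\geq 1}$ is non-negative, uniformly bounded, and monotone non-increasing once $N>x^{(2)}$; convergence to a finite limit then follows from the monotone convergence theorem for real sequences. Non-negativity is immediate from the definition as a sum of probabilities, so the two nontrivial ingredients are (a) a uniform upper bound, and (b) the monotonicity.

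For (a), I would use path reversal. A trajectory from $z$ to $x$ of length $k$ whose final step is from $y$ and which avoids $A\cup L_0$ before time $k$ has probability $(1/4)^k$, equal to the probability of its time reverse: starting at $x$, stepping to $y$, then performing a walk avoiding $A\cup L_0$ for $k-1$ further steps and terminating at $z$. Summing over $k$ yields the Green-function identity
\[
\sharm_{A,N}(e) \;=\; \frac{1}{4}\sum_{z\in L_N\setminus A} G^{A\cup L_0}(y,z) \;=\; \frac{1}{4}\, E_y\bigl[\#\{0\leq n<\overline{\tau}_{A\cup L_0} : S_n\in L_N\}\bigr],
\]
where $G^{A\cup L_0}$ is the Green's function of SRW killed on $A\cup L_0$. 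Removing the absorption on $A$ only increases the count, and the resulting quantity depends on the walk only through the height process $\eta_n=S_n^{(2)}$, a lazy one-dimensional simple random walk absorbed at $0$. An explicit Green's function computation for $\eta$ gives the uniform bound $\sharm_{A,N}(e)\leq \min(y^{(2)},N)\leq y^{(2)}<\infty$.

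For (b), fix $N>x^{(2)}$. Because $x$ lies strictly below $L_N$, every trajectory from $z\in L_{N+1}$ whose absorption point is $x$ must reach $L_N$ before hitting $A\cup L_0$, and must do so at a point of $L_N\setminus A$. Applying the strong Markov property at $\tau_{L_N}$ and interchanging the order of summation,
\[
\sharm_{A,N+1}(e) \;=\; \sum_{z'\in L_N\setminus A} c_{z'}\, P_{z'}\bigl(S_{\overline{\tau}_{A\cup L_0}}=x,\, S_{\overline{\tau}_{A\cup L_0}-1}=y\bigr),
\]
where $c_{z'}=\sum_{z\in L_{N+1}\setminus A} P_z(\tau_{L_N}<\overline{\tau}_{A\cup L_0},\, S_{\tau_{L_N}}=z')$. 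The key estimate is $c_{z'}\leq 1$, which follows from the identity $\sum_{z\in L_{N+1}}P_z(S_{\tau_{L_N}}=z')=1$, valid for every $z'\in L_N$. Using horizontal and vertical translation invariance of SRW, this sum reduces to $\sum_{c\in\mathbb{Z}}P_{(0,1)}\bigl(S_{\tau_{L_0}}=(c,0)\bigr)=1$, the latter holding because two-dimensional SRW hits $L_0$ almost surely from $(0,1)$, so its hitting distribution on $L_0$ is a probability measure. Comparing with the definition of $\sharm_{A,N}(e)$ then yields $\sharm_{A,N+1}(e)\leq\sharm_{A,N}(e)$.

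The main obstacle I anticipate is identifying and rigorously verifying the exchange-of-roles identity $\sum_{z\in L_{N+1}}P_z(S_{\tau_{L_N}}=z')=1$; the sum is over an infinite index set even though individual terms are small, so one must carry out the translation reduction cleanly (Tonelli is available since all terms are non-negative). Once this linchpin is in hand, monotonicity for $N>x^{(2)}$ combines with non-negativity and the uniform bound $y^{(2)}$ to produce $\sharm_A(e):=\lim_{N\to\infty}\sharm_{A,N}(e) \in [0, y^{(2)}]$ as required.
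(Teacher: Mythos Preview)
The paper does not prove this proposition; it is quoted as Proposition~1 of \cite{procaccia2017stationary} and used as a known input. So there is no in-paper proof to compare against directly. The paper does, however, leave two strong hints about the original argument: the proof of Lemma~\ref{l1} says the argument in \cite{procaccia2017stationary} ``follows a coupling argument by translating one path starting from a fixed point of $L_N$ horizontally,'' and in the unnumbered proposition after Lemma~\ref{l2} the identity $\sum_{y\in L_{m_1}}P_y(S_{\tau_{L_m}}=z)=1$ is used explicitly to show $\sharm_{A,m_1}(x)=\sharm_{A,m}(x)$ once $m$ exceeds the height of a finite $A$; later the paper also invokes that ``the sequence $H_{A,j}(x)$ is decreasing for $j\ge N$'' from the proof in \cite{procaccia2017stationary}.

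Your monotonicity step is exactly this translation identity combined with the strong Markov property at $\tau_{L_N}$, so it matches the intended method. The only difference is that you keep the inequality $c_{z'}\le 1$ (which accommodates unbounded $A$), whereas the paper's finite-$A$ computation gets equality. Your path-reversal/Green-function bound $\sharm_{A,N}(e)\le y^{(2)}$ is a clean uniform bound that the present paper does not spell out but which is consistent with the reversibility arguments it uses elsewhere (e.g.\ in the proof of Lemma~\ref{l5}). Your argument is correct and in line with the cited approach.
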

$\sharm_A (e)$ is called the stationary harmonic measure of $e$ with respect to $A$. The limits
$$
\sharm_{A}(x) := \lim_{N \rightarrow \infty} \sharm_{A,N}(x) 
$$
and 
$$
\hat{\harm}_{A}(y) := \lim_{N \rightarrow \infty} \hat{\harm}_{A,N}(y) 
$$
also exist, and they are called the stationary harmonic measure of $x$ and $y$ with respect to $A$.

\begin{definition}
	\label{definition growth}
	We say that a set $L_0 \subset A \subset \upspace$ has a polynomial sub-linear growth if there exists a constant $\alpha \in (0,1)$ such that
	$$
	| \{ x = (x^{(1)}, x^{(2)}) \in A: x^{(2)} > |x^{(1)}|^{\alpha} \} | < \infty.
	$$
\end{definition}

In this paper, we write positive constants as $c$, $C$, or $c_0$, but their values can be different from place to place.

\subsection{Main Theorem}

Let $\harm$ be the regular harmonic measure. The main result of this paper proves the asymptotic  equivalence between the stationary harmonic measure of any given point with respect to subset $A$ satisfying Definition \ref{definition growth} and the rescaled regular harmonic measure of the same point with respect to the truncations of $A$. To be precise, 
\begin{theorem}
	\label{theorem part 2}
	For any subset $A$ satisfying Definition \ref{definition growth} and any positive integer $n$, let 
	\beq
	\label{truncation}
	A_n=A \cap \Big\{ [-n, n] \times \mathbb{Z} \Big\}
	\eeq
	be the truncation of $A$ with width $2n$. There is a constant $C\in (0,\infty)$, independent of the set $A$, such that any point $x\in A\setminus L_0$, 
	\beq
	\label{equivalence}
	C \lim_{n\to\infty} n \harm_{A_n}(x)=\sharm_{A}(x). 
	\eeq
	Moreover, $C = 2/ \lim_{n \rightarrow \infty } n \harm_{D_n} (0)$, where $D_n = ([-n,n] \times \{0\}) \cap \mathbb{Z}^2$.
\end{theorem}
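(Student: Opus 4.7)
The plan is to represent both $\harm_{A_n}(x)$ and $\sharm_A(x)$ via random walks from an intermediate horizontal line $L_N$ at height $N=N(n)=\lfloor n^\beta\rfloor$, with $\beta\in(\alpha,1)$ fixed. By Definition~\ref{definition growth}, $A_n$ sits strictly below $L_N$ for all $n$ large, while $N/n\to 0$ keeps the scale $N$ negligible on the scale of the width $n$. The universal constant $C$ will be identified by applying the same framework to the trivial case $A=L_0$, $A_n=D_n$.

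Concretely, for $y$ above $L_N$ the strong Markov property at $\tau_{L_N}$ gives
\[
P_y\bigl(S_{\tau_{A_n}}=x\bigr)=\sum_{z\in L_N}P_y\bigl(S_{\tau_{L_N}}=z\bigr)\,P_z\bigl(S_{\tau_{A_n}}=x\bigr),
\]
in which the weight $P_y(S_{\tau_{L_N}}=z)$ is controlled by the discrete Poisson kernel of the half-plane above $L_N$. The core step is the uniform comparison, for $z\in L_N$ with $|z^{(1)}|$ in a bulk window (say $\le n/2$),
\[
P_z\bigl(S_{\tau_{A_n}}=x\bigr)=P_z\bigl(S_{\bar\tau_{A\cup L_0}}=x\bigr)\bigl(1+o(1)\bigr),
\]
whose error is bounded by the event that the $\mathbb Z^2$ walk from $z$ crosses $L_0$ outside $[-n,n]$ before being absorbed, of probability $O(N/n)=o(1)$ by Poisson-kernel tail estimates for a walk descending height $N$. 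Coupled with a tail bound for $z\in L_N$ with $|z^{(1)}|\gtrsim n$ (based on the decay of both factors in $|z^{(1)}|$), a careful asymptotic analysis of the identity as $|y|\to\infty$, together with Proposition~\ref{pp} applied to replace $\sum_{z\in L_N}P_z(S_{\bar\tau_{A\cup L_0}}=x)$ by $\sharm_A(x)$, yields a representation
\[
\harm_{A_n}(x)=\kappa_n\,\sharm_A(x)\bigl(1+o(1)\bigr),
\]
where $\kappa_n$ depends only on $n$, $N$ and the two-dimensional simple random walk, not on $A$, and is of order $1/n$.

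To evaluate $\kappa_n$ I apply the framework to $A=L_0$, for which $A_n=D_n$ and a direct Poisson-kernel computation gives $\sharm_{L_0}(x)\equiv 1$ for every $x\in L_0$. Specializing to $x=0$ would naively produce $\harm_{D_n}(0)=\kappa_n(1+o(1))$, but one must account for the fact that the $L_N$-decomposition with $L_N\subset\upspace$ only sees walks approaching $0$ from above, whereas $\harm_{D_n}(0)$ collects contributions from both half-planes. Reflection symmetry of the two-dimensional simple random walk across the invariant set $L_0$ makes these two contributions equal, so the correct identity becomes $\harm_{D_n}(0)=2\kappa_n(1+o(1))$. Substitution yields $n\harm_{A_n}(x)\to\sharm_A(x)/C$ with $C=2/\lim_n n\harm_{D_n}(0)$, which is the claimed equation.

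The main obstacle is the quantitative version of the comparison step together with the tail control: the error estimates must be sharp enough to survive the rescaling by $n$. The polynomial sub-linear growth hypothesis is essential, since it is exactly what makes the intermediate scale $N=n^\beta$, $\beta\in(\alpha,1)$, available, and hence the lateral-escape probability $O(N/n)$ genuinely $o(1)$ uniformly across admissible $A$. In particular, extracting the precise constant factor (rather than merely the $O(1/n)$ scaling) from the $L_N$-decomposition requires a delicate asymptotic analysis of the two-dimensional Poisson kernel on the relevant scales.
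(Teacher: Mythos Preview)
Your overall strategy---decompose the harmonic measure through an intermediate layer at height $\sim n^\beta$, replace the inner factor by the stationary harmonic measure, and calibrate the remaining prefactor via $A=L_0$---coincides with the paper's. The gap is the choice of intermediate set: stopping on the \emph{infinite} line $L_N$ does not produce an $A$-independent prefactor $\kappa_n$.

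Concretely, the Poisson kernel $P_y(S_{\tau_{L_N}}=z)$ spreads its mass over a window of width comparable to $y^{(2)}-N$. For $P_y(S_{\tau_{A_n}}=x)$ to approximate $\harm_{A_n}(x)$ you need $\|y\|\gg n$; but then most of the Poisson mass sits on $z$ with $|z^{(1)}|\gg n$, exactly where your bulk comparison $P_z(S_{\tau_{A_n}}=x)\approx P_z(S_{\bar\tau_{A}}=x)$ fails (for such $z$ the left side is already close to $\harm_{A_n}(x)$, so the decomposition becomes tautological in the limit). Conversely, if $y^{(2)}\ll n$ the bulk comparison does dominate and one gets $P_y(S_{\tau_{A_n}}=x)\approx\tfrac{1}{\pi(y^{(2)}-N)}\sharm_A(x)$, but this is \emph{not} $\harm_{A_n}(x)$: take $A=L_0$, $x=0$, where the left side is $\sim 1/y^{(2)}$ while $\harm_{D_n}(0)\sim 1/n$. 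No choice of $y$ makes both desiderata hold, so the $L_N$-decomposition cannot isolate an $A$-independent $\kappa_n$ of order $1/n$; your ``tail bound based on the decay of both factors'' cannot be uniform in $y$.

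The paper fixes this by stopping instead on the boundary of the \emph{finite} rectangle $Box(n)=[-n,n]\times[0,\lfloor n^{\alpha_1}\rfloor]$, so that $\harm_{Box(n)}$ is a genuine probability measure on a finite set and
\[
\harm_{A_n}(x)=\sum_{y\in\partial^{in}Box(n)}\harm_{Box(n)}(y)\,P_y(S_{\bar\tau_{A_n}}=x)
\]
holds exactly, with no limit in $y$. One then shows that (i) points outside a central portion $l_n$ of the top edge contribute $o(1/n)$; (ii) $\sum_{y\in l_n}P_y(S_{\bar\tau_{A_n}}=x)\to\sharm_A(x)$ (this is where your comparison idea is actually used); and (iii) $\harm_{Box(n)}(y)=\tfrac12\harm_{D_n}(0)+o(1/n)$ uniformly for $y\in l_n$. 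Step (iii) is the ``delicate asymptotic analysis'' you flag, and it, together with the separate result that $\lim_n n\harm_{D_n}(0)$ exists---which your sketch assumes but never addresses---is where the universal constant actually comes from.
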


\begin{remark}
	For points in $L_0$, we can replace the regular harmonic measure $\harm_{A_n}(x)$ in \eqref{equivalence} by its edge version. I.e.,  we have for all $x\in L_0$, 
	\beq
	\label{equivalence 0}
	C \lim_{n\to\infty} \lim_{\|y\|\to\infty} n P_y \left(S_{\tau_{A_n}}=x, S^{(2)}_{\tau_{A_n}-1}>0\right)=\sharm_{A}(x). 
	\eeq
	Later one can see the proof of \eqref{equivalence 0} follows exactly the same argument as the one for \eqref{equivalence}. 
\end{remark}

The structure of this paper is as follows: We show that stationary harmonic measure is equivalent to a normalized harmonic measure in section \ref{s1}, and the proof of Theorem \ref{theorem part 2} is presented in section \ref{s2}. 

\section{Stationary Harmonic Measure is Equivalent to Normalized Harmonic Measure}
\label{s1}

\begin{lemma}
\label{l1}
For all $x \in L_0$, $\sharm_{L_0} (x) = 1$.
\end{lemma}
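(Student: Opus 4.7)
The plan is to use reversibility of simple random walk to rewrite $\sharm_{L_0,N}(x)$ as a single Green's-function sum, then to reduce that sum to a one-dimensional gambler's-ruin computation, and finally to observe that the answer does not depend on $N$.

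For $x=(x^{(1)},0)\in L_0$, the only edge $e=(x,y)$ with $y\in \upspace\setminus L_0$ is the vertical edge to $y:=(x^{(1)},1)$, since the horizontal neighbours of $x$ remain in $L_0$; hence $\sharm_{L_0,N}(x)=\sharm_{L_0,N}(e)$. By reversibility of the simple random walk, any path $z=w_0,w_1,\dots,w_{n-1}=y,w_n=x$ with $w_0,\dots,w_{n-1}\notin L_0$ has the same probability as its time reversal $x,y,w_{n-2},\dots,w_0=z$: a first step $x\to y$ of probability $1/4$, followed by an $(n-1)$-step path from $y$ to $z$ that avoids $L_0$. Summing over $n\ge 1$ yields
\[
P_z\!\left(S_{\bar\tau_{L_0}}=x,\,S_{\bar\tau_{L_0}-1}=y\right)\;=\;\tfrac14\,G(y,z),
\]
where $G(y,z):=E_y\!\left[\#\{m\ge 0:S_m=z,\ \bar\tau_{L_0}>m\}\right]$ is the Green's function of the walk killed on $L_0$. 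Summing over $z\in L_N\setminus L_0=L_N$ then gives
\[
\sharm_{L_0,N}(x)\;=\;\tfrac14\,E_y\!\left[\#\{m\ge 0:S_m\in L_N,\ \bar\tau_{L_0}>m\}\right].
\]

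The key reduction is that the vertical coordinate $Y_m:=S_m^{(2)}$ is, on its own, a lazy simple random walk on $\mathbb{Z}$ with staying probability $1/2$ (it does not move when the two-dimensional walk moves horizontally). Under this identification $\{S_m\in L_N\}=\{Y_m=N\}$ and $\bar\tau_{L_0}=\inf\{m\ge 0:Y_m=0\}$, so the bracket above equals the expected time the lazy walk started at $Y_0=1$ spends at level $N$ before hitting $0$.

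This expectation is a standard two-step computation. (i) For the embedded non-lazy walk on $\mathbb{Z}$, gambler's ruin gives $P_1(\tau_N<\tau_0)=1/N$, and from $N$ one returns to $N$ before $0$ with probability $\tfrac12\cdot 1+\tfrac12\cdot\tfrac{N-1}{N}=1-\tfrac{1}{2N}$; so the expected number of non-lazy visits to $N$ before absorption at $0$, starting from $1$, equals $(1/N)\cdot 2N=2$. (ii) Each such visit is inflated in the lazy walk to a $\mathrm{Geom}(1/2)$ sojourn of mean $2$ at $N$. Multiplying, the expected lazy time at level $N$ is $4$, and therefore
\[
\sharm_{L_0,N}(x)\;=\;\tfrac14\cdot 4\;=\;1\qquad\text{for every }N\ge 1,
\]
so $\sharm_{L_0}(x)=1$ in the limit. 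The only conceptual step is the reversibility identity converting a sum of hitting probabilities into a single Green's function; after that the horizontal translation invariance of $L_N$ and $L_0$ makes the reduction to one dimension exact, and no real obstacle remains.
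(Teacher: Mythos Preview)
Your proof is correct, but it takes a genuinely different route from the paper's.

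The paper's argument is a one-line translation coupling: for fixed $N$, run a single random walk $S^{(0,N)}$ from $(0,N)$ and set $S^{(k,N)}=S^{(0,N)}+(k,0)$ for every $k\in\ZZ$. All these translates share the same $\bar\tau_{L_0}$, and $S^{(k,N)}_{\bar\tau_{L_0}}=S^{(0,N)}_{\bar\tau_{L_0}}+(k,0)$, so for every outcome exactly one $k$ satisfies $S^{(k,N)}_{\bar\tau_{L_0}}=x$. Hence $\sharm_{L_0,N}(x)=\sum_{k\in\ZZ}P(S^{(k,N)}_{\bar\tau_{L_0}}=x)=1$ for every $N$, with no computation at all.

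Your approach instead converts the sum over starting points into the killed Green's function $G(y,z)$ via reversibility, sums over $z\in L_N$, projects to the lazy one-dimensional walk $Y_m=S_m^{(2)}$, and then evaluates the expected occupation time of level $N$ before absorption at $0$ by gambler's ruin. Every step is valid: the reversibility identity is standard, the projection is exact because $\{S_m\in L_N\}=\{Y_m=N\}$ and the killing time depends only on $Y$, and the lazy-walk occupation computation (probability $1/N$ of reaching $N$, geometric number $2N$ of non-lazy visits, each inflated by a mean-$2$ sojourn) is correct. What you gain is an explicit probabilistic decomposition showing \emph{where} the constant $1$ comes from, and a technique that would adapt to weighted or non-translation-invariant variants; what the paper's argument buys is brevity and a direct conceptual explanation via the translation symmetry of $L_0$ and $L_N$.
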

 
\begin{proof}
Like Proposition $1$ in \cite{procaccia2017stationary}, the proof follows a coupling argument by translating one path starting from a fixed point of $L_N$ horizontally. For each $N$, let $S_n^{(0,N)}$ be a simple random walk in the probability space $P_{(0,N)} (\cdot)$ starting at $(0,N)$, and $S_n^{(k,N)} = S_n^{(0,N)} + (k,0)$ for all $k \in \mathbb{Z}$. Note that $S_n^{(k,N)}$ is a simple random walk starting at $(k,N)$. Let
$$
\overline{\tau}_{L_0} = \inf \{n \geq 0: S_n^{(0,N)} \in L_0 \} 
$$
be a stopping time. Then we have
$$
\overline{\tau}_{L_0} = \inf \{n \geq 0: S_n^{(k,N)} \in L_0 \} 
$$
for any $k \in \mathbb{Z}$, and
$$
S_{\overline{\tau}_{L_0}}^{(k,N)} = S_{\overline{\tau}_{L_0}}^{(0,N)} + (k,0).
$$
Hence,
$$
\sharm_{L_0, N} (x) = \sum_{k \in \mathbb{Z} } P( S_{\overline{\tau}_{L_0}}^{(k,N)} = x) = 1.
$$
By definition of the stationary harmonic measure,
$$
\sharm_{L_0} (x) = \lim_{N \rightarrow \infty} \sharm_{L_0, N} (x) =1.
$$

\end{proof}

We now define a new measure $\asharm_A(\cdot)$ which can be shown equivalent to the stationary harmonic measure $\sharm_A (\cdot)$. For each $n > 0$, we first define
$$
\asharm_{A,n} (x) = \pi n P_{(0,n)} (S_{\tau_{A \cup L_0}} = x).
$$

\begin{lemma}
\label{l2}
For all $x = (x^{(1)}, 0) \in L_0$,
$$
\lim_{n \rightarrow \infty} \asharm_{L_0 ,n} (x) = 1.
$$
\end{lemma}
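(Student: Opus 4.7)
The plan is to identify $\asharm_{L_0,n}(x)=\pi n P_{(0,n)}(S_{\tau_{L_0}}=x)$ as a rescaled discrete Poisson kernel on the upper half lattice, and to evaluate its asymptotic by passing through the Green's function of SRW absorbed on $L_0$ and the potential kernel of $\mathbb{Z}^2$. Since both $(0,n)$ and the target $(x^{(1)},0)$ are on the vertical line through the origin and $x^{(1)}$ stays fixed while $n\to\infty$, the Cauchy-type decay $P_{(0,n)}(S_{\tau_{L_0}}=(k,0))\sim\frac{1}{\pi}\cdot\frac{n}{n^2+k^2}$ collapses to $\frac{1}{\pi n}$, making the limit equal to $1$.

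First I would observe that since $S_0=(0,n)$ with $n>0$, the walk stays strictly above $L_0$ until $\tau_{L_0}$, so it must reach $x=(x^{(1)},0)$ from its unique off-line neighbor $(x^{(1)},1)$, giving
$$P_{(0,n)}\bigl(S_{\tau_{L_0}}=x\bigr)=\tfrac14\,G^{L_0}\!\bigl((0,n),(x^{(1)},1)\bigr),$$
where $G^{L_0}$ is the Green's function for SRW killed on $L_0$. Next, writing $a(\cdot)$ for the potential kernel of planar SRW (with $a(0)=0$ and $a(u,v)=a(u,-v)$), I would verify the reflection identity
$$G^{L_0}(x,y)=a(x-\widetilde y)-a(x-y),\qquad \widetilde y:=(y^{(1)},-y^{(2)}),$$
by checking that the right-hand side vanishes on $L_0$ (by the $L_0$-symmetry of $a$), has the correct discrete Laplacian on $\upspace\setminus L_0$ (with a unit source at $y$, since $\widetilde y$ lives in the lower half plane), and tends to $0$ at infinity.

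With $x=(0,n)$ and $y=(x^{(1)},1)$ this gives $G^{L_0}((0,n),(x^{(1)},1))=a(-x^{(1)},n+1)-a(-x^{(1)},n-1)$. I would then apply the classical expansion $a(z)=\tfrac{2}{\pi}\log|z|+c_0+O(|z|^{-2})$ to rewrite this as
$$\tfrac{1}{\pi}\log\frac{(x^{(1)})^2+(n+1)^2}{(x^{(1)})^2+(n-1)^2}+O(n^{-2})=\tfrac{4}{\pi}\cdot\frac{n}{(x^{(1)})^2+n^2}+O(n^{-2}),$$
using $\log(1+\tfrac{4n}{(x^{(1)})^2+(n-1)^2})=\tfrac{4}{n}+O(n^{-2})$ for fixed $x^{(1)}$. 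Multiplying by $\tfrac14$ and then by $\pi n$ yields
$$\asharm_{L_0,n}(x)=\frac{n^2}{(x^{(1)})^2+n^2}+O(1/n)\longrightarrow 1.$$

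The main obstacle is not any single calculation but the careful invocation of the reflection identity for the Green's function on the \emph{infinite} boundary $L_0$: since 2D SRW is recurrent, the ambient Green's function on $\mathbb{Z}^2$ is infinite, and the reflection principle must be phrased in terms of differences of the potential kernel rather than differences of Green's functions. Once that is in place, the sharp remainder in the potential kernel asymptotic is a standard fact (Spitzer/Lawler) and the final limit is routine.
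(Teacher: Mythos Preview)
Your proof is correct, but it takes a different and more hands-on route than the paper. The paper simply invokes Theorem 8.1.2 of Lawler--Limic, which already gives the sharp half-plane Poisson kernel asymptotic
\[
P_{(0,n)}(S_{\tau_{L_0}}=x)=\frac{n}{\pi(n^2+(x^{(1)})^2)}\Bigl(1+O\bigl(\tfrac{n}{n^2+(x^{(1)})^2}\bigr)\Bigr)+O\bigl((n^2+(x^{(1)})^2)^{-3/2}\bigr),
\]
and multiplies by $\pi n$. You instead re-derive the needed special case from first principles: last-step decomposition $P_{(0,n)}(S_{\tau_{L_0}}=x)=\tfrac14\,G^{L_0}((0,n),(x^{(1)},1))$, the reflection identity $G^{L_0}(x,y)=a(x-\widetilde y)-a(x-y)$, and the Stöhr/Spitzer expansion of the potential kernel. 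Each step is sound; the only place to be slightly more careful in a write-up is the uniqueness argument behind the reflection identity (the difference $G^{L_0}-f$ is bounded, harmonic on the open upper half lattice, and vanishes on $L_0$, hence extends by odd reflection to a bounded harmonic function on $\ZZ^2$ and is therefore zero).

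What each approach buys: the paper's one-line citation is the shortest path and also yields the uniform-in-$x^{(1)}$ bound used later (e.g.\ in the dominated convergence step of the next proposition). Your argument is more self-contained and makes the mechanism transparent, but as written it only handles fixed $x^{(1)}$; if you need the uniform bound $\sup_{y_0}\asharm_{L_0,n}(y_0)<\infty$ later, you would either note that your computation also gives $\asharm_{L_0,n}(x)\le 1+O(n^{-1})$ uniformly, or cite the same Lawler--Limic estimate at that point.
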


\begin{proof}
By Theorem 8.1.2 in Lawler and Limic \cite{lawler2010random},
$$
P_{(0,n)} (S_{\tau_{L_0}} = x) = \dfrac{n}{\pi (n^2 + (x^{(1)})^2)} \Bigg( 1 + O \bigg(  \dfrac{n}{n^2 + (x^{(1)})^2} \bigg) \Bigg) + O \Bigg( \dfrac{1}{(n^2 + (x^{(1)})^2)^{3/2}} \Bigg). 
$$
So,
$$
\lim_{n \rightarrow \infty} \asharm_{L_0 ,n} (x) = 1.
$$
\end{proof}

Similar to the construction of the stationary harmonic measure $\sharm_A (\cdot)$, we want to define a measure $\asharm_{A}$ on $\upspace$ as following:
$$
\asharm_{A} (x) := \lim_{ N \rightarrow \infty } \asharm_{A,N} (x),
$$
and denote it by the {\bf $in$-harmonic measure}. We want to show that $ \asharm_{A} = \sharm_A $. We already proved that $\asharm_{L_0} = \sharm_{L_0}$ in Lemma \ref{l1} and Lemma \ref{l2}.
 
\begin{proposition}
Let $A \subset \upspace$ be a connected finite subset. For any $x \in \upspace$,
$$
\asharm_{A} (x) := \lim_{ N \rightarrow \infty } \asharm_{A,N} (x)
$$
exists, and $\asharm_{A} (x) = \sharm_A (x)$.
\end{proposition}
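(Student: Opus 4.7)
The plan is to decompose both $\asharm_{A,N}(x)$ and $\sharm_{A,M}(x)$ by conditioning on the first passage through a well-chosen intermediate horizontal line $L_M$, and then compare them using the explicit Poisson-kernel expansion of Lemma~\ref{l2}. Since $A$ is finite, fix $M$ strictly larger than $h(A):=\max_{v\in A}v^{(2)}$, so that $A\cup L_0\subset\{v^{(2)}<M\}$; write $\tau:=\bar\tau_{A\cup L_0}$ and $\sigma_{L_M}:=\min\{n\ge0:S_n\in L_M\}$.

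The first observation is that $\sharm_{A,M'}(x)$ is already independent of $M'$ whenever $M'>M$, so by Proposition~\ref{pp}, $\sharm_A(x)=\sharm_{A,M}(x)$. Indeed, any walk starting on $L_{M'}$ that ever reaches $A\cup L_0$ must first cross $L_M$, so the strong Markov property at $\sigma_{L_M}$ yields
\beq
\sharm_{A,M'}(x)=\sum_{z\in L_M}\Big(\sum_{w\in L_{M'}}P_w(S_{\sigma_{L_M}}=z)\Big)\,P_z(S_{\tau}=x).
\eeq
The inner parenthesis equals $1$ for every fixed $z\in L_M$, by the horizontal-translation coupling used in Lemma~\ref{l1}: translating one fixed walk started from $(0,M')$ by each integer $k$ realizes every possible hitting site on $L_M$ exactly once. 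Hence $\sharm_{A,M'}(x)=\sharm_{A,M}(x)$.

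The second step decomposes $\asharm_{A,N}(x)$ through the same line. For $N>M$ the walk from $(0,N)$ must also cross $L_M$ before hitting $A\cup L_0$, so
\beq
\asharm_{A,N}(x)=\sum_{z\in L_M}\pi N\,P_{(0,N)}(S_{\sigma_{L_M}}=z)\,P_z(S_{\tau}=x).
\eeq
By vertical translation invariance $P_{(0,N)}(S_{\sigma_{L_M}}=(z,M))=P_{(0,N-M)}(S_{\tau_{L_0}}=(z,0))$, to which Lemma~\ref{l2} applies: for each fixed $z$ the weight $\pi N\,P_{(0,N-M)}(S_{\tau_{L_0}}=(z,0))$ tends to $1$ as $N\to\infty$, and a careful tracking of the error terms of Theorem 8.1.2 of Lawler-Limic shows that, for $N\ge 2M$, this weight is bounded uniformly in $(N,z)$ by some absolute constant $C$ (the principal term satisfies $\frac{N(N-M)}{(N-M)^2+z^2}\le\frac{N}{N-M}\le 2$, while the residual terms contribute only $O(1/N)$ uniformly in $z$).

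The final step is dominated convergence inside the sum of Step~2. The majorant $z\mapsto C\,P_z(S_{\tau}=x)$ is summable precisely because $\sum_{z\in L_M}P_z(S_{\tau}=x)=\sharm_{A,M}(x)<\infty$, which is guaranteed by Proposition~\ref{pp}. Combining the three steps,
\beq
\lim_{N\to\infty}\asharm_{A,N}(x)=\sum_{z\in L_M}P_z(S_{\tau}=x)=\sharm_{A,M}(x)=\sharm_A(x).
\eeq
The chief technical obstacle is precisely the uniform bound in Step~2: in the regime $|z|\gg N$ the crude leading-term estimate is not sharp, so one must verify that the two $O$-terms of Lemma~\ref{l2}, multiplied by $\pi N$ and weighted against $P_z(S_{\tau}=x)$, make only a vanishing contribution; the summable envelope needed to pass the limit inside the sum comes for free from Proposition~\ref{pp}. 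The argument is insensitive to whether $x\in A$ or $x\in A\cap L_0$, and the same decomposition underlies the edge-version extension stated in the remark after Theorem~\ref{theorem part 2}.
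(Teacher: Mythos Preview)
Your proof is correct and follows essentially the same route as the paper's: both decompose through an intermediate horizontal line $L_M$ above $A$ via the strong Markov property, show the Poisson-kernel weights $\pi N\,P_{(0,N)}(S_{\sigma_{L_M}}=z)$ converge to $1$ and are uniformly bounded (the paper invokes the monotonicity $\asharm_{L_0,n}(0)\ge\asharm_{L_0,n}(y_0)$ for this, whereas you bound the terms of the Lawler--Limic expansion directly), apply dominated convergence with the summable envelope $\sharm_{A,M}(x)<\infty$, and verify separately that $\sharm_{A,M'}(x)$ is constant for $M'>h(A)$ via the translation-coupling identity of Lemma~\ref{l1}. The only cosmetic difference is the order of presentation---you establish the constancy of $\sharm_{A,M'}(x)$ first, while the paper does it last.
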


\begin{proof}
Without loss of generality, we assume $x \in \partial^{out} A$. Let 
$$
k = \max \{ x^{(2)} : x = (x^{(1)}, x^{(2)}) \in A \},
$$
and $ n > m > k$ so that $L_m \cap A = \emptyset $. By Strong Markov Property and translation invariance of simple random walk,
\begin{equation}
\begin{split}
& \asharm_{A,n} (x) \\
& = \pi n P_{(0,n)} (S_{\tau_{A \cup L_0}} = x) \\
& = \pi n \sum_{ y \in L_m } P_{(0,n)} (S_{\tau_{L_m}} = y) P_y (S_{\tau_{A \cup L_0}} = x) \\
& = \dfrac{n}{n-m} \sum_{ y \in L_m } P_y (S_{\tau_{A \cup L_0}} = x) \bigg[ \pi (n-m) P_{(0,n)} (S_{\tau_{L_m}} = y) \bigg] \\
& = \dfrac{n}{n-m} \sum_{ y \in L_m } P_y (S_{\tau_{A \cup L_0}} = x) \asharm_{L_0 ,n - m} (y_0),
\end{split}
\end{equation}
where $y_0 = (y^{(1)},0)$. Then by Dominated Convergence Theorem and Lemma \ref{l2},
\begin{equation}
\label{666}
\begin{split}
& \lim_{ n \rightarrow \infty } \asharm_{A,n} (x) \\
& = \lim_{ n \rightarrow \infty } \sum_{ y \in L_m } P_y (S_{\tau_{A \cup L_0}} = x) \dfrac{n}{n-m} \asharm_{L_0 ,n - m} (y_0) \\
& = \sum_{ y \in L_m } P_y (S_{\tau_{A \cup L_0}} = x) \bigg[ \lim_{ n \rightarrow \infty } \dfrac{n}{n-m} \asharm_{L_0 ,n - m} (y_0) \bigg] \\
& = \sum_{ y \in L_m } P_y (S_{\tau_{A \cup L_0}} = x) \\
& = \sharm_{A, m} (x).
\end{split}
\end{equation}
We can apply Dominated Convergence Theorem in equation (\ref{666}) because $\asharm_{L_0 ,n - m} (y_0)$ is uniformly bounded from above for all $n$ and $y_0 \in \mathbb{Z}$ by Theorem 8.1.2 of \cite{lawler2010random} and the fact that $\asharm_{L_0 ,n - m} (0) \geq \asharm_{L_0 ,n - m} (y_0)$ for all $y_0 \in \mathbb{Z}$. We claim that $\sharm_{A, m} (x) = \sharm_{A} (x)$. Let $m_1 > m$. By Strong Markov Property and Lemma \ref{l1},
\begin{equation}
\begin{split}
& \sharm_{A, m_1} (x) \\
& = \sum_{ y \in L_{m_1} } P_y (S_{\tau_{A \cup L_0}} = x) \\
& = \sum_{ y \in L_{m_1} } \sum_{ z \in L_{m} } P_y (S_{\tau_{L_{m} }} = z) P_z (S_{\tau_{A \cup L_0}} = x) \\ 
& = \sum_{ z \in L_{m} } P_z (S_{\tau_{A \cup L_0}} = x) \bigg[ \sum_{ y \in L_{m_1} } P_y (S_{\tau_{L_{m} }} = z) \bigg] \\
& = \sum_{ z \in L_{m} } P_z (S_{\tau_{A \cup L_0}} = x) \sharm_{L_0, m_1 - m} (z') \\
& = \sum_{ z \in L_{m} } P_z (S_{\tau_{A \cup L_0}} = x) \\
& = \sharm_{A, m} (x),
\end{split}
\end{equation}
where $z' = z - (0, m)$. Hence,
$$
\asharm_{A} (x) = \sharm_{A, m} (x) = \lim_{N \rightarrow \infty } \sharm_{A, N} (x) = \sharm_{A} (x).
$$
\end{proof}

Our next goal is to show that the measures $\asharm_{A}$ and $\sharm_A$ are equivalent for sets that satisfy polynomial sub-linear growth condition. We first prove the following combinatorial result: For any positive integer $n$, consider the following rectangle in $\ZZ^2$: 
\beq
\label{set A_n}
I_n =[-n,n]\times[0,n]
\eeq
with height $n$ and width $2n$. It is easy to see that $I_n\subset B(0,2n)$. Moreover, we let $\partial^{in}I_n$ be the inner vertex boundary of $A_n$, and let 
$$
\partial^{in}_{l} I_n =\{-n\}\times[1,n], \ \ \partial^{in}_{r} I_n=\{n\}\times[1,n], \ \ \partial^{in}_{u} I_n=[-n,n]\times\{n\}, \ \ \partial^{in}_{b} I_n=[-n,n]\times\{0\}
$$
be the four edges of $\partial^{in} I_n$. 

Let $\{S_n, \ n\ge 0\}$ be a simple random walk starting from $0$ and denote by $P_0$ the probability distribution of $S_n$. Define stopping time 
$$
T_n=\inf\{k>0, \ S_k\in \partial^{in}I_n\}.
$$
Using simple combinatorial arguments, we prove the following lemma: 
\begin{lemma}
\label{lemma combinatorial}
For any integer $n>1$
$$
P_0\left(S_{T_n}\in \partial^{in}_{u} I_n \right)\ge P_0\left(S_{T_n}\in \partial^{in}_{l} I_n \cup \partial^{in}_{r} I_n \right). 
$$
\end{lemma}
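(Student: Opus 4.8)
The plan is to reduce the statement to a one‑sided comparison and then prove that by reflection across the main diagonal, treating separately the paths that stay inside $I_n$ and those that leave it.

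\emph{Reduction.} The reflection $(a,b)\mapsto(-a,b)$ fixes $0$ and exchanges $\partial^{in}_{l}I_n$ with $\partial^{in}_{r}I_n$ as well as the halves $[-n,-1]\times\{n\}$ and $[1,n]\times\{n\}$ of the top edge, so the corresponding hitting probabilities are equal. Since $\partial^{in}_{l}I_n\cap\partial^{in}_{r}I_n=\emptyset$, and since the two halves of the top edge are disjoint and contained in $\partial^{in}_{u}I_n$, adding up these facts reduces the lemma to the single inequality
\[
P_0\bigl(S_{T_n}\in\{n\}\times[1,n]\bigr)\ \le\ P_0\bigl(S_{T_n}\in[1,n]\times\{n\}\bigr),
\]
i.e.\ the walk hits $\partial^{in}I_n$ first on the right half of the top edge at least as often as on the right edge.

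\emph{A dichotomy on the first step.} The set $\partial^{in}I_n$ is exactly the set of vertices of $I_n$ adjacent to a vertex outside $I_n$, so a path started inside $I_n$ can leave $I_n$ only by passing through $\partial^{in}I_n$. As $0=(0,0)\in\partial^{in}_{b}I_n$, the walk leaves $I_n$ before time $T_n$ only by stepping to $(0,-1)$ at time $1$, and then it remains in $\ZZ^2\setminus I_n$ until $T_n$; on the complementary event $\{S_1\neq(0,-1)\}$ the entire path $S_0,\dots,S_{T_n}$ lies in $I_n$. I will prove the displayed inequality on each of these two events.

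\emph{Paths inside $I_n$ (the case $S_1\neq(0,-1)$).} Use the reflection $\rho(a,b)=(b,a)$ across $D=\{x=y\}$. Given a path $\gamma$ of this type that first hits $\partial^{in}I_n$ on the right edge, at $(n,j)$, let $\tau$ be its last visit to $D$; for $j<n$ the path is strictly below $D$ after time $\tau$ (the case $j=n$ is trivial, as then $\gamma$ already ends on the top edge). Let $\gamma'$ agree with $\gamma$ up to time $\tau$ and equal $\rho(\gamma_k)$ for $k>\tau$. The key point is that a vertex $p\in I_n$ lying strictly below $D$ with $p\notin\partial^{in}I_n$ has $\rho(p)\notin\partial^{in}I_n$ as well: $\rho(p)\in\partial^{in}I_n$ would force $p$ onto the right edge $\{n\}\times[0,n]$, which $\gamma$ avoids before $T_n$. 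Hence $\gamma'$ again lies in $I_n$, does not meet $\partial^{in}I_n$ before time $T_n$, has the same length as $\gamma$, and first hits $\partial^{in}I_n$ at $\rho(n,j)=(j,n)\in[1,n]\times\{n\}$. The map $\gamma\mapsto\gamma'$ is injective — invert it by reflecting the part of $\gamma'$ after its last visit to $D$ back across $D$ — so summing the weights $4^{-(\mathrm{length})}$ gives $P_0\bigl(S_{T_n}\in\{n\}\times[1,n],\,S_1\neq(0,-1)\bigr)\le P_0\bigl(S_{T_n}\in[1,n]\times\{n\},\,S_1\neq(0,-1)\bigr)$.

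\emph{Paths outside $I_n$ (the case $S_1=(0,-1)$): the main obstacle.} The diagonal reflection is again the natural device, but now the path may descend below the $x$-axis, and there the key point above fails: reflecting a path that first hits the right edge can produce one that first hits the bottom edge. Resolving this is where the real work lies. I expect to handle it by decomposing the path at its excursions below the $x$-axis (during which the walk cannot meet $\partial^{in}I_n$ at all), applying the diagonal reflection only to the portion of the path that lies in the closed upper half‑plane, and recombining — or, alternatively, by a direct comparison of exit distributions for the walk in the unbounded exterior domain $\ZZ^2\setminus I_n$. Once this case is also established, combining it with the previous one — and recalling that $(0,n)$ contributes only to the top edge — yields the lemma.
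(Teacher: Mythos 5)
Your symmetry reduction and your treatment of the paths that stay in $I_n$ are correct and coincide exactly with the paper's argument: reflect across the diagonal after the last visit to $\{x=y\}$, check that the reflected portion stays in $I_n\setminus\partial^{in}I_n$, and invert by reflecting back. But the proposal is not a proof: the second half of your dichotomy (first step to $(0,-1)$, after which the walk wanders in $\ZZ^2\setminus I_n$ until $T_n$) is only announced (``I expect to handle it by\dots'') and never carried out. For what it is worth, the paper never confronts this case either: its path sets $\mathcal{U}^+_{n,k}$ and $\mathcal{R}_{n,k}$ require every intermediate point to lie in $I_n\setminus\partial^{in}I_n$, so the identity $P_0(S_k\in\partial^{in}_{u,+}I_n,\ T_n=k)=|\mathcal{U}^+_{n,k}|/4^k$ silently discards precisely the trajectories you isolate, namely those with $S_1=(0,-1)$. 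What the paper actually proves is the inequality restricted to paths confined to $I_n$ up to time $T_n$ --- i.e.\ exactly your first case --- and that confined version is also what its later applications need, since there the competing event is always hitting $L_0$ or $D_n$ first.

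You are right that the exterior case is the real obstacle, and I would go further: the strategies you sketch are unlikely to succeed, because on $\{S_1=(0,-1)\}$ the inequality appears to point the wrong way. Conditioned on the walk escaping the segment $D_n$ and first meeting $I_n$ off the bottom edge --- an event of probability of order $n^{-1/2}$ by a Beurling estimate --- the walk typically sneaks around one of the bottom corners $(\pm n,0)$ and then hits the rectangle near that corner, i.e.\ on a side edge, much more often than on the top edge; so the exterior contribution to $P_0(S_{T_n}\in\partial^{in}_l I_n\cup\partial^{in}_r I_n)-P_0(S_{T_n}\in\partial^{in}_u I_n)$ should be positive and of order $n^{-1/2}$. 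The surplus coming from the interior paths cannot absorb this, since the interior top-hitting probability is already at most $P_0(\tau_{L_n}<\tau_{L_0})=\tfrac1{4n}=o(n^{-1/2})$. The productive move is therefore not to ``handle'' the exterior case but to recognize that the statement one can (and needs to) prove is the confined one, with both events intersected with $\{S_j\in I_n\ \forall\, 1\le j\le T_n\}$; with that restriction your first case already finishes the proof, and it is then your responsibility, when the lemma is applied, to check that only confined trajectories enter.
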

\begin{proof}
Let $\partial^{in}_{u,+} I_n = [1,n]\times\{n\}$ and $\partial^{in}_{u,-} I_n=[-n,-1]\times\{n\}$ be the left and right half of $\partial^{in}_{u} I_n$. By symmetry it suffices to prove that 
\beq
\label{combinatorial 1}
P_0\left(S_{T_n}\in \partial^{in}_{u,+} I_n \right)\ge P_0\left(S_{T_n}\in \partial^{in}_{r} I_n \right). 
\eeq
 By definition, we have 
$$
P_0\left(S_{T_n}\in \partial^{in}_{u,+} I_n \right)=\sum_{k=1}^\infty P_0\left(S_{k}\in \partial^{in}_{u,+} I_n, \ T_n=k \right)
$$
and 
$$
P_0\left(S_{T_n}\in \partial^{in}_{r} I_n \right)=\sum_{k=1}^\infty P_0\left(S_{k}\in \partial^{in}_{r} I_n, \ T_n=k \right).
$$
Moreover, for each $k$, 
$$
P_0\left(S_{k}\in \partial^{in}_{u,+} I_n , \ T_n=k \right)=\frac{|\mathcal{U}^+_{n,k}|}{4^k}, \ \ P_0\left(S_{k}\in \partial^{in}_{r} I_n, \ T_n=k \right)=\frac{|\mathcal{R}_{n,k}|}{4^k}
$$
where 
$$
\begin{aligned}
\mathcal{U}^+_{n,k}=\left\{(a_0,a_1,\cdots, a_k), \ \text{such that } a_0=0, \ \|a_{i+1}-a_i\|=1, \ \forall i=0,1,\cdots, k-1, \right.\\
\left. \ \ a_j\in A_n\setminus\partial^{in}A_n, \forall j=1,2,\cdots, k-1,  \ a_k\in \partial^{in}_{u,+} I_n  \right\}
 \end{aligned}
$$
and 
$$
\begin{aligned}
\mathcal{R}_{n,k}=\left\{(a_0,a_1,\cdots, a_k), \ \text{such that } a_0=0, \ \|a_{i+1}-a_i\|=1, \ \forall i=0,1,\cdots, k-1, \right.\\
\left. \ \ a_j\in A_n\setminus\partial^{in}A_n, \forall j=1,2,\cdots, k-1,  \ a_k\in \partial^{in}_{r} I_n  \right\}
 \end{aligned}
$$
give the subsets of the random walk trajectories in events $\{S_{T_n}\in \partial^{in}_{u,+} I_n\}$ and $\{S_{T_n}\in \partial^{in}_{r} I_n\}$.

Thus in order to show \eqref{combinatorial 1}, we construct a one-to-one mapping $\varphi$ between the trajectories in $\mathcal{R}_{n,k}$ and $\mathcal{U}^+_{n,k}$. For any trajectory $\vec a=(a_0,a_1,\cdots, a_k)\in \mathcal{R}_{n,k}$, define 
$$
m(\vec a)=\sup\left\{i\ge 0, \ a_i^{(1)}=a_i^{(2)}\right\}
$$
to be the last point in the trajectory lying on the diagonal. Here $a_i^{(1)}$ and $a_i^{(2)}$ are the two coordinates of $a_i$. In this paper, we use the convention that $\sup\{\emptyset\}=-\infty$. Then it is easy to see that $0\in \left\{i\ge 0, \ a_i^{(1)}=a_i^{(2)}\right\}$ and thus $m(\vec a)\ge 0$ and that $m(\vec a)<k$. The reason of the latter inequality is that suppose $m(\vec a)=k$, then we must have $a_k=(n,n)$ which implies that $a_{k-1}=(n-1,n)$ or $(n,n-1)$, which contradicts with the definition of $\vec a$. 

Now we can define 
$$
\varphi(\vec a)=\vec a'=(a_0',a_1',\cdots, a_k')
$$
such that 
\begin{itemize}
\item $a_i'=a_i$ for all $i\le m(\vec a)$. 
\item $a_i'=\left(a_i^{(2)},a_i^{(1)}\right)$ for all $i> m(\vec a)$. 
\end{itemize}

\begin{figure}[h]
\centering 
\begin{tikzpicture}[scale=0.5]
\tikzstyle{redcirc}=[circle,
draw=black,fill=red,thin,inner sep=0pt,minimum size=2mm]
\tikzstyle{bluecirc}=[circle,
draw=black,fill=blue,thin,inner sep=0pt,minimum size=2mm]

\draw [black,thin] (10,0) to (0,0);

\draw[black,thin] (10,0) to (20,0);

\draw [black,thin] (0,0) to (0,10);

\draw [black,thin] (20,0) to (20,10);

\draw [black,thin] (0,10) to (20,10);

\node (v1) at (10,0) [bluecirc] {};

\draw [black,dashed] (10,0) to (20,10);

\draw [blue,thick] plot [smooth, tension=2.8] coordinates {(10,0) (9,3) (11,2.5) (12,4) (13,3)};

\draw [blue,thick] plot [smooth, tension=2.5] coordinates {(13,3) (15,2.5) (14,2) (15,6)};

\draw [blue,thick] plot [smooth, tension=2.5] coordinates {(15,6) (15.5,3) (15.8,6) (17,7)};

\draw [blue,thick] plot [smooth, tension=2.5] coordinates {(17,7) (18,6.5) (18.8,8.3) (19.5,8.2) (20,8.1)};

\draw [red,dashed] plot [smooth, tension=2.5] coordinates {(17,7) (16.5,8) (18.3,8.8) (18.2,9.5) (18.1,10)};

\node (v2) at (20,8.1) [bluecirc] {};

\node (v3) at (18.1,10) [redcirc] {};

\node (v4) at (17,7) [bluecirc] {};

\end{tikzpicture}
\caption{mapping between trajectories in $\mathcal{R}_{n,k}$ and $\mathcal{U}^+_{n,k}$}
\end{figure}
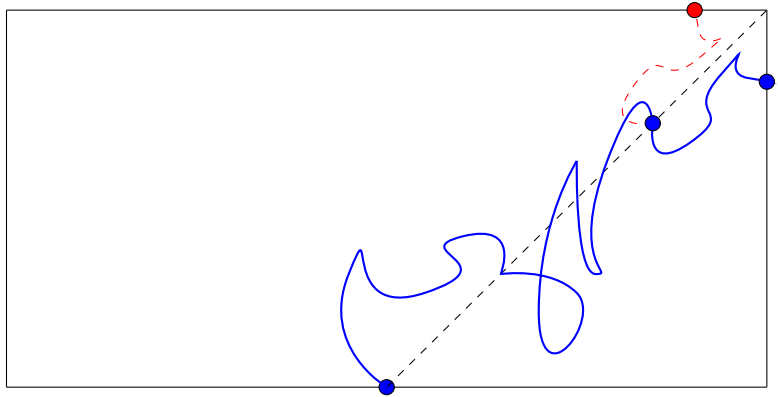

I.e., we reflect the trajectory after the last time it visits the diagonal line $x=y$. By definition
$$
\left(a_{m(\vec a)+1}, a_{m(\vec a)+2},\cdots, a_{k-1}\right)
$$
stays within $\{(x,y)\in\ZZ^2, \  0< y<x< n\}$, while $a_k\in R_{n}$. Thus, under reflection we have 
$$
\left(a_{m(\vec a)+1}', a_{m(\vec a)+2}',\cdots, a_{k-1}'\right)
$$
stays within $\{(x,y)\in\ZZ^2, \  0< x<y< n\}$, while $a_k'\in U^+_{n,k}$, which implies that $\vec a'\in \mathcal{U}^+_{n,k}$. 

On the other hand, suppose we have two trajectories $\vec a$ and $\vec b$ both in $ \mathcal{R}_{n,k}$, such that $\varphi(\vec a)=\varphi(\vec b)$. Then one must have $m(\vec a)=m(\vec b)=m$ and that $a_i=b_i$ for all $i \le m$. Moreover, for all $i>m$, we have 
$$
\left(a_i^{(2)},a_i^{(1)}\right)=a_i'=b_i'=\left(b_i^{(2)},b_i^{(1)}\right)
$$
which also implies that $a_i=b_i$. Thus we have shown that $\varphi(\vec a)=\varphi(\vec b)$ if and only if $\vec a=\vec b$ and $\varphi$ is a one-to-one mapping, which conclude the proof of this lemma. 
\end{proof}

We define
$$
F_m = F_{m, \alpha } = \{ - \lfloor m^{1/\alpha} \rfloor , \lfloor m^{1/\alpha} \rfloor \} \times \mathbb{Z}_{\geq 0}
$$
as two vertical lines on $\upspace$. 

\begin{lemma}
\label{l4}
Fix $x \in \upspace$, then for all sufficiently large $m$,
$$
P_x (\tau_{F_{m, \alpha }} < \tau_{L_0} ) \leq cm^{-1/\alpha}.
$$
\end{lemma}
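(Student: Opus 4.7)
The plan is to bound $P_x(\tau_{F_{m,\alpha}}<\tau_{L_0})$ by comparing it to the tail of the harmonic measure on $L_0$ from $x$, for which Theorem 8.1.2 of \cite{lawler2010random} (used earlier in Lemma \ref{l2}) supplies an explicit discrete Cauchy-type density. Write $M=\lfloor m^{1/\alpha}\rfloor$, $\tau=\tau_{F_{m,\alpha}}$, and $T=\tau_{L_0}$. We may assume $x^{(2)}\ge 1$; the case $x\in L_0$ reduces to this via a one-step decomposition, because a walk that jumps to $L_{-1}$ must return through $L_0$ before it can reach $F_{m,\alpha}\subset\upspace$, while a walk that jumps to $L_1$ falls under the main case.

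First, applying Theorem 8.1.2 of \cite{lawler2010random} at $x$ and summing the density over sites $(k,0)$ with $|k|\ge M/2$ gives, for all $m$ so large that $|x^{(1)}|\le M/4$,
\[
P_x\bigl(|S_T^{(1)}|\ge M/2\bigr)\le \frac{C\, x^{(2)}}{M}\le c_1\,m^{-1/\alpha}.
\]
Second, I claim that for every $y\in F_{m,\alpha}$ with $y^{(2)}\ge 1$,
\[
P_y\bigl(|S_T^{(1)}|\ge M/2\bigr)\ge \tfrac12.
\]
Writing $y^{(1)}=\epsilon M$ with $\epsilon\in\{\pm1\}$, this follows from translation invariance combined with the reflection symmetry $(S_n^{(1)},S_n^{(2)})\mapsto(-S_n^{(1)},S_n^{(2)})$ of SRW started on the vertical axis:
\[
P_y\bigl(|S_T^{(1)}|\ge M/2\bigr)\ge P_y\bigl(\epsilon S_T^{(1)}\ge M/2\bigr)=P_{(0,y^{(2)})}\bigl(S_T^{(1)}\ge -M/2\bigr)\ge P_{(0,y^{(2)})}\bigl(S_T^{(1)}\ge 0\bigr)\ge \tfrac12.
\]

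These two ingredients combine via the strong Markov property at $\tau$. On $\{\tau<T\}$ we have $S_\tau\in F_{m,\alpha}$ with $S_\tau^{(2)}\ge 1$, so the second ingredient applies to $S_\tau$; conditioning on $S_\tau$ and then invoking the first gives
\[
\tfrac12\,P_x(\tau<T)\le E_x\bigl[P_{S_\tau}\bigl(|S_T^{(1)}|\ge M/2\bigr);\,\tau<T\bigr]\le P_x\bigl(|S_T^{(1)}|\ge M/2\bigr)\le c_1\,m^{-1/\alpha},
\]
from which the claim follows. The main subtle point is the lower bound in Step 2: Theorem 8.1.2 of \cite{lawler2010random} has an uncontrolled relative error when the starting height is small, so direct asymptotics are unavailable for all values of $y^{(2)}$; the translation-reflection argument sidesteps this difficulty, and is the key idea making the proof go through uniformly in the exit height.
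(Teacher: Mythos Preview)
Your argument is correct and takes a genuinely different route from the paper's.

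The paper does not invoke Theorem~8.1.2 of \cite{lawler2010random} here at all. Instead it (i) replaces $x$ by its projection $x'=(x^{(1)},0)$, using that a walk from $x'$ can climb to $x$ before touching $F_{m,\alpha}\cup L_0$ with probability bounded below (independently of $m$); (ii) translates to start from $0$ and encloses the event $\{\tau_{F_{m,\alpha}}<\tau_{L_0}\}$ in the exit event through the sides of the rectangle $I_{\lfloor m^{1/\alpha}/2\rfloor}$; and (iii) applies the combinatorial reflection Lemma~\ref{lemma combinatorial} to bound the side-exit probability by the top-exit probability, which in turn is at most $P_0(\tau_{L_{\lfloor m^{1/\alpha}/2\rfloor}}<\tau_{L_0})=O(m^{-1/\alpha})$ by gambler's ruin.

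Your approach trades the combinatorial lemma for the analytic input of the Cauchy-type density, then recovers the desired bound via the strong Markov property and your translation--reflection inequality $P_y(|S_T^{(1)}|\ge M/2)\ge 1/2$ for $y\in F_{m,\alpha}$. The symmetry argument in Step~2 is the right idea: as you note, one cannot simply appeal to Theorem~8.1.2 at $y$ uniformly in $y^{(2)}$, but the reflection bypasses this entirely. The paper's proof is more self-contained (no hitting-density asymptotics needed), while yours avoids developing or citing Lemma~\ref{lemma combinatorial} and would work in any setting where a tail bound for the harmonic measure on $L_0$ is already available.
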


\begin{proof}
Let $m > 4|x_1|$, and $x' = (x^{(1)}, 0)$. There exists a constant $C >0$ independent of $m$ such that
$$
C P_x (\tau_{F_{m, \alpha }} < \tau_{L_0} ) \leq P_{x'} (\tau_{F_{m, \alpha }} < \tau_{L_0} ).
$$
By translation invariance of simple random walk, we have 
$$
P_{x'} (\tau_{F_{m, \alpha }} < \tau_{L_0} ) \leq P_{0} (\tau_{I_{\lfloor m^{1/\alpha} /2 \rfloor}} < \tau_{L_0} ).
$$
By Lemma \ref{lemma combinatorial},
$$
P_{0} (\tau_{I_{\lfloor m^{1/\alpha} /2 \rfloor}} < \tau_{L_0} ) \leq 2 P_{0} (\tau_{L_{\lfloor m^{1/\alpha} /2 \rfloor}} < \tau_{L_0} ) \leq cm^{-1/\alpha}.
$$
\end{proof}

The next lemma claims that $\sharm_A$ is concentrated on the part arising from random walks starting from $y \in L_m$ such that $|y^{(1)}| \leq \lfloor m^{1/ \alpha} \rfloor$.

\begin{lemma}
\label{l5}
Let $A \subset \upspace$ be an infinite set that has polynomial sub-linear growth with parameter $ \alpha \in (0,1)$. Let $1 > \alpha_1 = ( \alpha +1 )/2 > \alpha$, then for any $x \in \upspace$,
$$
\lim_{m \rightarrow \infty} \Bigg| \sum_{y \in L_m ,  |y^{(1)}| \leq \lfloor m^{1/ \alpha_1} \rfloor } P_y (S_{\tau_{A} \cup L_0} =x) - \sharm_{A,m} (x) \Bigg| = 0. 
$$
\end{lemma}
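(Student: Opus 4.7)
\medskip

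\noindent\textbf{Proof plan.} Set $R_m := \{y \in L_m : |y^{(1)}| > \lfloor m^{1/\alpha_1}\rfloor\}$. For $m$ large enough that $x \notin R_m$, we have $P_y(S_{\bar\tau_{A \cup L_0}} = x) = 0$ for any $y \in R_m \cap A$ (since then $\bar\tau_{A \cup L_0} = 0$ and $S_0 = y \neq x$), so the difference in the lemma reduces to $\sum_{y \in R_m} P_y(S_{\bar\tau_{A \cup L_0}} = x)$. It suffices to show that this sum tends to $0$ (and for $y \in L_m \setminus A$, $\tau_{A \cup L_0} = \bar\tau_{A \cup L_0}$, so the $\tau$ vs. $\bar\tau$ distinction in the statement is immaterial here).

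The first step is a path-reversal identity. For $y \in R_m$ (so $y \notin A \cup L_0$), each length-$k$ simple-random-walk trajectory from $y$ to $x$ whose interior avoids $A \cup L_0$ reverses to a length-$k$ trajectory from $x$ to $y$ with the same weight $4^{-k}$, giving
\[
\sum_{y \in R_m} P_y(S_{\bar\tau_{A \cup L_0}} = x) \;=\; \sum_{k=1}^\infty P_x\bigl(S_k \in R_m,\; S_j \notin A \cup L_0 \text{ for } 1 \le j \le k-1\bigr) \;=\; E_x[N_{R_m}],
\]
where $N_{R_m}$ counts the visits of the walk from $x$ to $R_m$ strictly before hitting $A \cup L_0$.

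The second step bounds $E_x[N_{R_m}]$ by the strong Markov property at $\sigma := \tau_{R_m}$. Since $R_m \subset L_m$ and $L_0 \subset A$, further visits after $\sigma$ are bounded by visits to $L_m$ before $\tau_{L_0}$, so
\[
E_x[N_{R_m}] \;\le\; P_x(\sigma < \tau_{A \cup L_0}) \cdot \Bigl(1 + \max_{z \in L_m} E_z\bigl[\#\{k < \tau_{L_0} : S_k \in L_m\}\bigr]\Bigr).
\]
For large $m$, $|x^{(1)}| < \lfloor m^{1/\alpha_1}\rfloor$, so to reach $R_m$ the walker must first cross the vertical line $F_{m,\alpha_1}$. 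Lemma \ref{l4} applied with $\alpha_1$ in place of $\alpha$ (its proof is uniform in the exponent) combined with $L_0 \subset A$ gives $P_x(\sigma < \tau_{A \cup L_0}) \le P_x(\tau_{F_{m,\alpha_1}} < \tau_{L_0}) \le c\, m^{-1/\alpha_1}$. For the second factor, starting at $z \in L_m$ only the vertical coordinate matters, and it behaves as a lazy simple walk on $\mathbb{Z}_{\ge 0}$ absorbed at $0$; a standard Green-function/gambler's-ruin computation yields $\max_{z \in L_m} E_z[\#\{k < \tau_{L_0} : S_k \in L_m\}] \le Cm$.

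Multiplying these bounds produces a tail estimate of order $m^{1-1/\alpha_1}$, which vanishes as $m \to \infty$ since $\alpha_1 \in (\alpha, 1)$ forces $1/\alpha_1 > 1$. The main conceptual ingredient is the path-reversal identity, which converts a sum of first-passage probabilities starting at many points into a single occupation-time expectation; Lemma \ref{l4} (a dispersion estimate for the walk) and the linear $O(m)$ Green function for a 1D walk killed at the origin then close the argument. No structure of $A$ beyond $L_0 \subset A$ and polynomial sub-linear growth is used.
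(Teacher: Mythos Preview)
Your proof is correct and follows essentially the same route as the paper's: both reverse paths to convert the tail sum into an expected occupation time of the far set $R_m$ (the paper's $G_{m,\alpha_1}$) for a walk started at $x$ and killed on $L_0$, then bound this by $P_x(\tau_{R_m}<\tau_{L_0})\cdot O(m)$ and invoke Lemma~\ref{l4} to get the $m^{-1/\alpha_1}$ factor. The only cosmetic difference is that the paper first relaxes $P_y(S_{\tau_A}=x)\le P_y(\tau_x\le\tau_{L_0})$ before reversing, whereas you reverse the exact quantity directly; the resulting estimate $O(m^{1-1/\alpha_1})$ is identical.
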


\begin{proof}
Note that $ \{ y \in L_n , |y^{(1)}| \leq \lfloor n^{1/ \alpha_1} \rfloor \} \cap A = \emptyset$.  Following the argument in \cite[Lemma 2]{kesten1987hitting} on time reversibility and symmetry of simple random walk, we have

\begin{equation}
\begin{split}
& P_y ( \tau_x = k \text{, } S_1 \text{, } \cdot \cdot \cdot \text{, } S_{k-1} \notin \{ x \} \cup L_0 ) \\
& = P_x ( \tau_y = k \text{, }  S_1 \text{, } \cdot \cdot \cdot \text{, } S_{k-1} \notin \{ x \} \cup L_0 ) \\
& = P_x ( S_k = y \text{, } \tau_{ \{ x \} \cup L_0 } > k).
\end{split}
\end{equation}
Then taking the summation over all $k$, we have
\begin{equation}
\begin{split}
& P_y ({\tau_{x}} \leq \tau_{L_0}) \\ 
& = \sum_{k = 1}^{\infty} P_y ( \tau_x = k \text{, } S_1 \text{, } \cdot \cdot \cdot \text{, } S_{k-1} \notin \{ x \} \cup L_0 ) \\
& = \sum_{k = 1}^{\infty} P_x ( S_k = y \text{, } \tau_{ \{x\} \cup L_0 } > k) \\
& \leq E_x \Bigg[ \text{ number of visits to } y \text{ in the time interval } [0, \tau_{ \{x\} \cup L_0 } ) \Bigg] \\
& \leq E_x \Bigg[ \text{ number of visits to } y \text{ in the time interval } [0, \tau_{ L_0 } ) \Bigg]
\end{split}
\end{equation}
Then,
\begin{equation}
\begin{split}
& \lim_{m \rightarrow \infty} \sum_{y \in L_m \setminus A,  |y^{(1)}| \geq \lceil m^{1/\alpha_1} \rceil } P_y (S_{\tau_{A}} =x) \\
& \leq \lim_{m \rightarrow \infty} \sum_{y \in L_m \setminus A,  |y^{(1)}| \geq \lceil m^{1/\alpha_1} \rceil } P_y ({\tau_{x}} \leq \tau_{L_0} ) \\
& \leq \lim_{m \rightarrow \infty} \sum_{y \in L_m \setminus A,  |y^{(1)}| \geq \lceil m^{1/\alpha_1} \rceil } E_x \Bigg[ \text{ number of visits to } y \text{ in the time interval } [0, \tau_{L_0}) \Bigg] \\
& \leq \lim_{m \rightarrow \infty} E_x \Bigg[ \text{ number of visits to } G_{m,\alpha_1} \text{ in the time interval } [0, \tau_{L_0}) \Bigg], 
\end{split}
\end{equation}
where $G_{m,\alpha_1} = \{ y \in L_m : |y^{(1)}| \geq \lceil m^{1/\alpha_1} \rceil \}$. By Lemma \ref{l4}, we have
\begin{equation}
\begin{split} 
& \lim_{m \rightarrow \infty} \Bigg| \sum_{y \in L_m \setminus A,  |y^{(1)}| \geq \lceil m^{1/\alpha_1} \rceil } P_y (S_{\tau_{A}} =x) \Bigg| \\
& \leq \lim_{m \rightarrow \infty} E_x \Bigg[ \text{ number of visits to } G_{m,\alpha_1} \text{ in the time interval } [0, \tau_{L_0}) \Bigg] \\
& \leq \lim_{m \rightarrow \infty} 4m P_x (\tau_{G_{m,\alpha_1}} < \tau_{L_0} ) \\
& \leq \lim_{m \rightarrow \infty} 4m P_x (\tau_{F_{m, \alpha_1 }} < \tau_{L_0} ) \\
& = 0.
\end{split}
\end{equation}
The proof is complete.
\end{proof}

\begin{lemma}
\label{l6}
Let $A \subset \upspace$ be an infinite set that has polynomial sub-linear growth with parameter $ \alpha \in (0,1)$. Let $1 > \alpha_1 = ( \alpha +1 )/2 > \alpha$, then for all $x \in \upspace$ and for all $\epsilon > 0$ and for $m$ and $n=n(m)$ large enough, we have 
$$
 \Bigg| \sum_{y \in L_m ,  |y^{(1)}| \leq \lfloor m^{1/\alpha_1} \rfloor } P_y (S_{\tau_{A} \cup L_0 } =x) - \asharm_{A,n} (x) \Bigg| < \epsilon.
$$
\end{lemma}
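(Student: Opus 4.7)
My plan is to mimic the strong Markov decomposition of the earlier proposition (which handled finite $A$) at $\tau_{L_m}$, carefully accounting for the fact that, for infinite $A$ with sub-linear growth, $L_m$ can intersect $A$ and the walk may be absorbed in $A_m^+:=A\cap\{z^{(2)}\ge m\}$ before reaching $L_m$. Fix $x\in\upspace$ and pick $m$ larger than $x^{(2)}$ and larger than the sub-linear growth threshold, so that $x$ lies strictly below $L_m$ and every point of $A_m^+$ satisfies $|z^{(1)}|\ge m^{1/\alpha}$. Any trajectory from $(0,n)$ that is absorbed at $x$ must first cross $L_m$ without being absorbed at $A$, so strong Markov at $\tau_{L_m\cup A}$ gives
$$\asharm_{A,n}(x)=\pi n\sum_{y\in L_m\setminus A}P_{(0,n)}(S_{\tau_{L_m\cup A}}=y)\,P_y(S_{\tau_{A\cup L_0}}=x).$$

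I would then split $P_{(0,n)}(S_{\tau_{L_m\cup A}}=y)=P_{(0,n)}(S_{\tau_{L_m}}=y)-P_{(0,n)}(S_{\tau_{L_m}}=y,\tau_A<\tau_{L_m})$. The main term equals $\frac{n}{n-m}\sum_{y\in L_m}\asharm_{L_0,n-m}(y_0)P_y(\cdot)$ with $y_0=(y^{(1)},0)$, reproducing the identity from the earlier proposition. Choose $n=n(m)$ with $n-m\gg m^{1/\alpha_1}$ so that Lemma \ref{l2} gives $\asharm_{L_0,n-m}(y_0)=1+o(1)$ uniformly over $|y^{(1)}|\le\lfloor m^{1/\alpha_1}\rfloor$; this, together with $n/(n-m)\to 1$ and the uniform boundedness of $\asharm_{L_0,n-m}(y_0)$ (Theorem 8.1.2 of \cite{lawler2010random}), allows me to replace the weighted $|y^{(1)}|\le m^{1/\alpha_1}$ part of the sum by $\sum_{|y^{(1)}|\le\lfloor m^{1/\alpha_1}\rfloor}P_y(\cdot)$ up to $\epsilon$-small error, and to bound the $|y^{(1)}|>m^{1/\alpha_1}$ tail by $\epsilon$ via Lemma \ref{l5}.

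It remains to bound the error term $\pi n\sum_yP_{(0,n)}(S_{\tau_{L_m}}=y,\tau_A<\tau_{L_m})P_y(\cdot)$. Since $A_m^+\subset\{|z^{(1)}|\ge m^{1/\alpha}\}$, hitting $A$ above $L_m$ requires first hitting $F_m$, and a slab harmonic-function estimate in the spirit of Lemma \ref{l4} gives $P_{(0,n)}(\tau_A<\tau_{L_m})\le P_{(0,n-m)}(\tau_{F_m}<\tau_{L_0})\lesssim(n-m)/m^{1/\alpha}$. The main obstacle is choosing $n=n(m)$ so that $\pi n\cdot(n-m)/m^{1/\alpha}$ is negligible while $n-m\gg m^{1/\alpha_1}$. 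For all $\alpha\in(0,1)$ this can be achieved by exploiting that on $\{\tau_A<\tau_{L_m}\}$, $S_{\tau_{L_m}}$ is concentrated at $|y^{(1)}|\gtrsim m^{1/\alpha}$, where a Green-function bound on the half-plane walk killed at $L_0$ yields $P_y(S_{\tau_{A\cup L_0}}=x)\lesssim m/(y^{(1)})^2$; this extra smallness closes the gap and lets one pick e.g.\ $n-m=m^{1/\alpha_1+\delta}$ with $\delta>0$ small.
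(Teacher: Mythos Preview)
Your strong Markov decomposition at $\tau_{L_m\cup A}$, the split into $|y^{(1)}|\le m^{1/\alpha_1}$ and $|y^{(1)}|>m^{1/\alpha_1}$, the use of Lemma~\ref{l5} for the tail, and the use of Lemma~\ref{l2} together with the uniform bound from Theorem~8.1.2 of \cite{lawler2010random} for the main term are all exactly what the paper does.  The only substantive divergence is in the control of the error term
\[
\pi n\sum_{y}P_{(0,n)}\bigl(S_{\tau_{L_m}}=y,\ \tau_A<\tau_{L_m}\bigr)\,P_y\bigl(S_{\tau_{A\cup L_0}}=x\bigr).
\]
The paper does not attempt a global estimate.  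Instead it replaces $A_m^+$ by the deterministic envelope $\widetilde A_m=\{z:\,|z^{(1)}|>m^{1/\alpha},\ m\le z^{(2)}\le |z^{(1)}|^\alpha\}$ and bounds, \emph{pointwise for each $y$ in the central window}, the correction by $\sum_{z\in\partial^{\mathrm{in}}\widetilde A_m}P_z(S_{\tau_{L_m}}=y)$; the resulting one--variable Cauchy sum over the boundary points $z=(k,\lfloor k^\alpha\rfloor)$ is then shown to be $o(1/n)$ for the specific choice $n=\lfloor m^{3/(2\alpha)-1/2}\rfloor$, after which one concludes using only $\sum_{y}P_y(\cdot)\le \sharm_{A,N}(x)$.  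No Green--function bound on $P_y(\cdot)$ and no concentration statement for $S_{\tau_{L_m}}$ are needed.

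Your route via ``$S_{\tau_{L_m}}$ is concentrated at $|y^{(1)}|\gtrsim m^{1/\alpha}$'' plus the half--plane Green bound $P_y(\cdot)\lesssim m/(y^{(1)})^2$ can be made to work, but as written it has a soft spot: the concentration claim is not quantified, and some mass of $S_{\tau_{L_m}}$ leaks back to the center (via the Cauchy tail from $z$), where your Green bound degenerates.  To actually close the gap with your choice $n-m=m^{1/\alpha_1+\delta}$ you would need to carry out the convolution of the Cauchy kernel $P_z(S_{\tau_{L_m}}=\cdot)$ with the Cauchy--like profile $m/((y^{(1)})^2+m^2)$, which then gives a per--$z$ bound of order $|z^{(1)}|^{\alpha-2}$ and does sum to $o(1)$.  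That computation is essentially equivalent to the paper's, just organized ``sum over $y$ first'' rather than ``sum over $z$ first''.  The paper's organization is a little cleaner because it needs only the single Cauchy estimate for $P_z(S_{\tau_{L_m}}=y)$ and avoids the additional half--plane Green input altogether.
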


\begin{proof}
Fix $x \in \upspace$ and $\epsilon > 0$. Let $l = \max \{ y^{(2)} : y \in A, y^{(2)} > |y^{(1)}|^{\alpha } \}$. Assume that $n$ and $m$ are large with $n > m > \max \{l, x^{(2)} \}$. Let $\alpha_1 = ( \alpha + 1)/2$ as defined in Lemma \ref{l5}. By strong Markov property, we have
\begin{equation}
\label{777}
\begin{split}
& \asharm_{A,n} (x) \\
& = \pi n P_{ (0,n) } (S_{\tau_{A }} = x) \\
& = \sum_{y \in L_{m} \setminus A} \pi n P_{(0,n)} (S_{\tau_{A \cup L_m}} = y) P_y (S_{\tau_{A}} = x) \\
& \leq \sum_{y \in L_{m} , |y^{(1)}| \leq \lfloor m^{1 / \alpha_1} \rfloor } \pi n P_{(0,n)} (S_{\tau_{A \cup L_m}} = y) P_y (S_{\tau_{A }} = x) +  c \sum_{y \in L_{m} \setminus A , |y^{(1)}| \geq \lceil m^{1/\alpha_1} \rceil } P_y (S_{\tau_{A }} = x),
\end{split}
\end{equation}
where $c > 0$ is a constant. The last inequality of equation (\ref{777}) is using Theorem 8.1.2 in \cite{lawler2010random} and the fact that
$$
P_{(0,n)} (S_{\tau_{A \cup L_m}} = y) \leq P_{(0,n)} (S_{\tau_{L_m}} = y).
$$
By Lemma \ref{l5}, we know
$$
\lim_{m \rightarrow \infty} \sum_{y \in L_{m} \setminus A, |y^{(1)}| \geq \lceil m^{1/ \alpha_1} \rceil } P_y (S_{\tau_{A}} = x) = 0.
$$
So there exists a $M_1 > \max \{l, x^{(2)} \}$ such that for all $m > M_1$ and all sufficiently large $n > m$,
$$
\Bigg| \asharm_{A,n} (x) - \sum_{y \in L_{m} , |y^{(1)}| \leq \lfloor m^{1 / \alpha_1} \rfloor } \pi n P_{(0,n)} (S_{\tau_{A \cup L_m}} = y) P_y (S_{\tau_{A }} = x) \Bigg| < \frac{\epsilon}{2}.
$$
Denote the set 
$$
\widetilde{A}_m = \{ x \in \upspace: x^{(1)} > \lfloor m^{1/\alpha} \rfloor, m \leq x^{(2)} \leq |x^{(1)}|^{\alpha} \}.
$$
Note that $\widetilde{A}_m$ contains the part of $A$ that is above the horizontal line $L_m$. For $y \in L_m$ such that $|y^{(1)}| \leq m^{1/\alpha_1}$,we have
\beq
P_{(0,n)} (S_{\tau_{A \cup L_m}} = y)\le P_{(0,n)} (S_{\tau_{L_m}} = y) 
\eeq
while 
\begin{equation}
\begin{split}
P_{(0,n)} (S_{\tau_{A \cup L_m}} = y) & \geq P_{(0,n)} (S_{\tau_{\widetilde{A}_m \cup L_m}} = y) \\
& = P_{(0,n)} (S_{\tau_{L_m}} = y) - \sum_{z \in \widetilde{A}_m} P_{(0,n)} (S_{\tau_{\widetilde{A}_m \cup L_m}} = z) P_z (S_{\tau_{L_m}} = y).
\end{split}
\end{equation}
Note that for $z \in \widetilde{A}_m$, $P_{(0,n)} (S_{\tau_{\widetilde{A}_m \cup L_m}} = z) = 0$ unless $z$ is in the upper inner boundary of $\widetilde{A}_m$, i.e. $z = (k, \lfloor k^{\alpha} \rfloor ) \in \partial^{\text{in}} \widetilde{A}_m $ for some $k > \lfloor m^{1/\alpha} \rfloor $. Suppose $z = (k, \lfloor k^{\alpha} \rfloor ) \in \partial^{\text{in}} \widetilde{A}_m $ with $k > \lfloor m^{1/\alpha} \rfloor $. Let $y \in L_m$ such that $|y^{(1)}| \leq m^{1/ \alpha_1}$. By Theorem 8.1.2 in Lawler and Limic \cite{lawler2010random}, we have
\begin{equation}
\begin{split}
& P_z (S_{\tau_{L_m}} = y) \\
& \leq \frac{c(\lfloor k^{\alpha} \rfloor - m)}{(\lfloor k^{\alpha} \rfloor - m)^2 + (k- \lfloor m^{1/\alpha_1} \rfloor )^2} \\
& \leq \frac{c( k^{\alpha} - m)}{(\lfloor k^{\alpha} \rfloor - m)^2 + (k- m^{1/\alpha_1} )^2}.
\end{split}
\end{equation}
So,
\begin{equation}
\begin{split}
& \sum_{z \in \widetilde{A}_m} P_{(0,n)} (S_{\tau_{\widetilde{A}_m \cup L_m}} = z) P_z (S_{\tau_{L_m}} = y) \\
& \leq \sum_{z \in \widetilde{A}_m} P_z (S_{\tau_{L_m}} = y) \\
& \leq c \sum_{k = \lceil m^{1/\alpha} \rceil }^{\infty} \frac{ k^{\alpha} - m}{(\lfloor k^{\alpha} \rfloor - m)^2 + (k- m^{1/\alpha_1} )^2} \\
& \leq c \sum_{s = 1}^{\infty} \frac{ (s + m^{1/\alpha} +1)^{\alpha} - m}{( \lfloor (s + \lfloor m^{1/\alpha} \rfloor )^{\alpha} \rfloor - m)^2 + (s + m^{1/\alpha} - m^{1/\alpha_1} )^2}.
\end{split}
\end{equation}
It's easy to see that the sum above converges and goes to $0$ if $m$ goes to infinity. Moreover, let's consider the sum
$$
S := c m^{3/(2 \alpha) - 1/2} \sum_{s = 1}^{\infty} \frac{ (s + m^{1/\alpha} +1)^{\alpha} - m}{( \lfloor (s + \lfloor m^{1/\alpha} \rfloor )^{\alpha} \rfloor - m)^2 + (s + m^{1/\alpha} - m^{1/\alpha_1} )^2}.
$$
Note that 
\begin{equation}
\begin{split}
& c m^{3/(2 \alpha) - 1/2} \sum_{s = 1}^{\infty} \frac{ (s + m^{1/\alpha} +1)^{\alpha} - m}{( \lfloor (s + \lfloor m^{1/\alpha} \rfloor )^{\alpha} \rfloor - m)^2 + (s + m^{1/\alpha} - m^{1/\alpha_1} )^2} \\
& \leq c m^{3/(2 \alpha) - 1/2} \sum_{s = 1}^{\infty} \frac{ (s + m^{1/\alpha} +1)^{\alpha} - m}{(s + m^{1/\alpha} - m^{1/\alpha_1} )^2}.
\end{split}
\end{equation}
For all $0 < \alpha < 1$, there is a $M > 0$ large enough such that for all $s > 0$ and $m' > M$,
$$
\frac{\partial}{\partial m} \Bigg( c m^{3/(2 \alpha) - 1/2} \sum_{s = 1}^{\infty} \frac{ (s + m^{1/\alpha} +1)^{\alpha} - m}{(s + m^{1/\alpha} - m^{1/\alpha_1} )^2} \Bigg) \Bigg|_{m = m'} < 0. 
$$
So the sum $S$ goes to $0$ if $m$ goes to infinity. Hence, we can take $n = \lfloor m^{3/(2 \alpha) - 1/2} \rfloor$. Note that $3/(2 \alpha) - 1/2 > 1/\alpha $. Then for any $y \in L_m $ with $|y^{(1)}| \leq \lfloor m^{1 / \alpha_1} \rfloor $, we have
$$
\lim_{m \rightarrow \infty} n \sum_{z \in \widetilde{A}_m} P_{(0,n)} (S_{\tau_{\widetilde{A}_m \cup L_m}} = z) P_z (S_{\tau_{L_m}} = y) = 0, 
$$
and
$$
\lim_{m \rightarrow \infty} \pi n P_{(0,n)} (S_{\tau_{A \cup L_m}} = y) = 1.
$$
Now fix $N > \max \{l, x_2 \}$. From the proof of Theorem 1 in \cite{procaccia2017stationary}, we know that the sequence $H_{A,j} (x)$ is decreasing for $j \geq N$. There exists a $M_2 > N$ such that for all $m > M_2$,  
$$
\Big| \pi n P_{(0,n)} (S_{\tau_{A \cup L_m}} = y) - 1 \Big| < \frac{\epsilon}{2 H_{A,N} (x)}.
$$
Therefore,
$$
\Bigg| \sum_{y \in L_{m} , |y^{(1)}| \leq \lfloor m^{1 / \alpha_1} \rfloor } \Big( \pi n P_{(0,n)} (S_{\tau_{A \cup L_m}} = y) -1 \Big) P_y (S_{\tau_{A}} = x) \Bigg| < \frac{\epsilon}{2}.
$$
Now take $m > \max \{M_1, M_2 \}$, and the proof is complete. 
\end{proof}

The following theorem is a direct consequence of Lemma \ref{l5} and Lemma \ref{l6}.

\begin{theorem}
Let $A \subset \upspace$ be an infinite set that has polynomial sub-linear growth. For any $x \in \upspace$,
$$
\asharm_{A} (x) := \lim_{ N \rightarrow \infty } \asharm_{A,N} (x)
$$
exists, and $\asharm_{A} (x) = \sharm_{A} (x)$.
\end{theorem}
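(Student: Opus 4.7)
The plan is to establish the theorem as a direct triangle-inequality consequence of the preceding two lemmas, using $\sharm_{A,m}(x)$ and an appropriate truncated sum as intermediaries between $\asharm_{A,n}(x)$ and $\sharm_A(x)$. Fix $x\in\upspace$ and $\epsilon>0$, and denote the truncated sum
\[
T_m(x)=\sum_{y\in L_m,\ |y^{(1)}|\le\lfloor m^{1/\alpha_1}\rfloor}P_y\bigl(S_{\tau_{A\cup L_0}}=x\bigr),
\]
where $\alpha_1=(\alpha+1)/2$ as in Lemma \ref{l5}. The target is to show that for every sufficiently large $n$,
\[
|\asharm_{A,n}(x)-\sharm_A(x)|\le|\asharm_{A,n}(x)-T_m(x)|+|T_m(x)-\sharm_{A,m}(x)|+|\sharm_{A,m}(x)-\sharm_A(x)|<\epsilon,
\]
provided $m$ is first chosen large enough and then $n$ is chosen sufficiently large in terms of $m$.

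The three pieces are handled in order. First, by Proposition \ref{pp}, the limit $\sharm_A(x)=\lim_{m\to\infty}\sharm_{A,m}(x)$ exists, so one can fix $m_1$ such that $|\sharm_{A,m}(x)-\sharm_A(x)|<\epsilon/3$ for all $m\ge m_1$. Next, Lemma \ref{l5} supplies $m_2$ so that $|T_m(x)-\sharm_{A,m}(x)|<\epsilon/3$ for $m\ge m_2$; this is exactly the statement that the contribution of the ``tails'' $|y^{(1)}|>\lfloor m^{1/\alpha_1}\rfloor$ to $\sharm_{A,m}(x)$ is negligible, an estimate that rests on the reversibility argument and the hitting-probability bound of Lemma \ref{l4}. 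Finally, fix $m\ge\max(m_1,m_2)$. By Lemma \ref{l6}, there exists $n(m)$ such that for $n\ge n(m)$ the comparison $|\asharm_{A,n}(x)-T_m(x)|<\epsilon/3$ holds, because within the truncated window the normalised escape probability $\pi n P_{(0,n)}(S_{\tau_{A\cup L_m}}=y)$ tends to $1$ (with the correction from paths reaching the portion of $A$ above $L_m$ controlled by the choice of $n$ relative to $m$). Summing the three bounds gives the claim and simultaneously proves that the limit $\lim_{n\to\infty}\asharm_{A,n}(x)$ exists.

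The main subtlety, and the step that requires the most care, is the third one: Lemma \ref{l6} is proved by choosing a specific scaling $n=\lfloor m^{3/(2\alpha)-1/2}\rfloor$, but the theorem demands convergence as $n\to\infty$ unrestricted. I would address this by verifying that the two key asymptotics used in the proof of Lemma \ref{l6}, namely $\pi n P_{(0,n)}(S_{\tau_{A\cup L_m}}=y)\to 1$ for each fixed $y$ in the truncation window and the vanishing of the error term from $\widetilde{A}_m$, both continue to hold for all $n\ge n(m)$ once $m$ is fixed large enough. With that observation in place, the triangle inequality above yields the full limit rather than mere subsequential convergence, and the identification $\asharm_A(x)=\sharm_A(x)$ follows because $\epsilon>0$ was arbitrary.
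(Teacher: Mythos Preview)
Your proposal is correct and follows essentially the same route as the paper: both arguments deduce the theorem directly from Lemmas~\ref{l5} and~\ref{l6} together with Proposition~\ref{pp} via a triangle inequality, using the truncated sum over $\{y\in L_m:|y^{(1)}|\le\lfloor m^{1/\alpha_1}\rfloor\}$ as the bridge between $\asharm_{A,n}(x)$ and $\sharm_{A,m}(x)$. The paper's write-up is terser---it asserts $|\sharm_{A,m}(x)-\asharm_{A,m}(x)|<\epsilon$ for all large $m$ and then concludes---whereas you spell out the three-term decomposition and, more importantly, flag the genuine issue that Lemma~\ref{l6} as proved chooses a specific $n=n(m)=\lfloor m^{3/(2\alpha)-1/2}\rfloor$, which prima facie only yields convergence along that subsequence. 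Your proposed resolution (check that for each fixed large $m$ the two key estimates---$\pi n P_{(0,n)}(S_{\tau_{A\cup L_m}}=y)\to 1$ and the $\widetilde A_m$ error term small---persist for all $n\ge n(m)$) is sound: once $m$ is fixed, the $\widetilde A_m$ correction is bounded uniformly in large $n$ by the same convergent series the paper computes, and that series is made small by taking $m$ large. So your argument is not only aligned with the paper's but slightly more careful on a point the paper leaves implicit.
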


\begin{proof}
Let $\epsilon > 0$. By Lemma \ref{l5} and Lemma \ref{l6}, there is an $M > 0$ such that for all $m > M$,
$$
|\sharm_{A,m} (x) - \asharm_{A, m} (x)| < \epsilon.
$$ 
We know
$$
\lim_{ m \rightarrow \infty } \sharm_{A,m} (x) = \sharm_{A} (x).
$$
Hence,
$$
\asharm_{A} (x) := \lim_{ m \rightarrow \infty } \asharm_{A,m} (x)
$$
exists and $\asharm_{A} (x) = \sharm_{A} (x)$.
\end{proof}

\section{Proof of the Main Theorem}
\label{s2}

In order to prove Theorem \ref{theorem part 2}, we first show its special situation when $A=L_0$, which can be stated as the following result on the asymptotic of regular harmonic measures: Let $D_n=[-n,n]\times\{0\}$ to be the horizontal line segment of interest. In this section we proved that  
\begin{theorem}
\label{theorem harmonic limit}
There is a constant $c\in (0,\infty)$ such that 
\beq
\label{harmonic limit}
\lim_{n\to\infty} n \harm_{D_n}(0)=c. 
\eeq
\end{theorem}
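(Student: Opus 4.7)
The plan is to link $n\harm_{D_n}(0)$ to the $2$D Brownian harmonic measure of the continuum segment $[-1,1]\times\{0\}$, for which an explicit formula is available, and then to transfer this to the lattice by an invariance-principle argument, establishing convergence of the rescaled discrete sequence.

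First, by reflection across $L_0$ and across the $y$-axis, I would reduce to computing $\lim_{R\to\infty}P_{(0,R)}(S_{\tau_{D_n}}=0)$. Fixing $R>1$ and letting $n\to\infty$, I rescale space by $n$ and time by $n^{2}$, so that $D_n$ becomes $[-1,1]\times\{0\}$ and the starting point becomes $(0,R)$. By Donsker's invariance principle together with a local-limit-theorem refinement, analogous in spirit to the use of Theorem $8.1.2$ of Lawler--Limic in Lemma \ref{l2}, one obtains
\[
n\,P_{(0,Rn)}(S_{\tau_{D_n}}=0)\longrightarrow h_R(0),
\]
where $h_R(\cdot)$ is the Brownian harmonic density on $[-1,1]\times\{0\}$ for a $2$D Brownian motion started at $(0,R)$.

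Next, I would let $R\to\infty$. The classical Joukowsky map $z\mapsto (z+z^{-1})/2$ conformally identifies the exterior of $[-1,1]\times\{0\}$ with the exterior of the unit disc; the harmonic measure of the circle viewed from a distant point is pushed onto the equilibrium measure of the segment, with density $1/\bigl(\pi\sqrt{1-x^{2}}\bigr)$. At $x=0$ this density equals $1/\pi$, so $h_R(0)\to 1/\pi$. This identifies the candidate limit $c=1/\pi$ and in particular shows $c\in(0,\infty)$.

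The main technical obstacle is justifying the exchange of the limits $R\to\infty$ (the walker's starting distance, built into the definition of $\harm_{D_n}$) and $n\to\infty$ (the lattice scale at which the invariance principle kicks in). I would handle this via a uniform estimate of the form
\[
\bigl|n\,P_y(S_{\tau_{D_n}}=0)-1/\pi\bigr|\leq\rho(R),\qquad\text{for all } n\geq N(R),\ |y|\geq Rn,
\]
with $\rho(R)\downarrow 0$ as $R\to\infty$; such an estimate can be extracted by combining the path-concentration inputs of Lemmas \ref{lemma combinatorial}--\ref{l4} with the local invariance principle. If one only needs existence of the limit (and not its value), an alternative is a direct Cauchy argument showing $|n\harm_{D_n}(0)-2n\,\harm_{D_{2n}}(0)|\to 0$ by applying the strong Markov property at the first crossing of an intermediate line $L_{\sqrt{n}}$ to couple the two scales; this route stays within the elementary framework of Section \ref{s1} but requires careful bookkeeping of discretization errors at the final absorption step on $L_0$.
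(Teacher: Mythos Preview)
Your overall strategy---use the invariance principle to connect discrete and Brownian harmonic measures, then read off the density at $0$---is natural, but there is a genuine gap at the step you treat as routine: the claim that $n\,P_{(0,Rn)}(S_{\tau_{D_n}}=0)\to h_R(0)$. Donsker's theorem gives only weak convergence of the rescaled hitting distribution on $D_n$ to the Brownian hitting distribution on $[-1,1]\times\{0\}$; this yields $\harm_{D_n}([-\delta n,\delta n]\times\{0\})\to\bharm_{[-1,1]\times\{0\}}([-\delta,\delta]\times\{0\})$ for every $\delta$ (which is precisely the paper's Lemma~\ref{brownian}), but it does \emph{not} give convergence of the pointwise density $n\harm_{D_n}(0)$. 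The local limit input you invoke, Theorem~8.1.2 of Lawler--Limic, is specific to hitting an infinite line, where explicit Fourier computations are available; there is no off-the-shelf analogue for the harmonic measure of a finite segment, and establishing one is essentially the content of the theorem itself. Your alternative Cauchy argument comparing scales $n$ and $2n$ runs into the same obstacle at the ``final absorption step'': controlling the difference at a single lattice point again requires pointwise, not merely distributional, control.

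What is missing is a local regularity estimate showing that the discrete density $\harm_{D_n}(x)$ is nearly constant over a macroscopic window around $0$. The paper provides exactly this as Proposition~\ref{Proposition coupling}: for each $\ep>0$ there is $\delta>0$ with $|\harm_{D_n}(0)-\harm_{D_n}(x)|<\ep/n$ for all $x\in[-\delta n,\delta n]\times\{0\}$, proved by a maximal-coupling argument after the walk has climbed to height $\ep n$. Combined with Propositions~\ref{Proposition limsup}--\ref{Proposition liminf} (finite positive $\limsup$ and $\liminf$) and the weak-convergence Lemma~\ref{brownian}, this flatness allows a short contradiction argument: if $\liminf<\limsup$, summing the nearly-constant density over $[-\delta n,\delta n]$ along two subsequences would force $\harm_{D_n}([-\delta n,\delta n]\times\{0\})$ to have two distinct subsequential limits, contradicting Lemma~\ref{brownian}. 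Your route would be rescued by inserting precisely this regularity step; once it is in place, both the density convergence and the exchange of limits become straightforward, and your identification $c=1/\pi$ via the Joukowsky map is a pleasant bonus the paper does not pursue.
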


The structure of this section is as follows: In subsections \ref{sub2} and \ref{sub3} we outline the proof of Theorem \ref{theorem harmonic limit} and Theorem \ref{theorem part 2}. Then in the following subsections, we give the detailed proof of the required propositions and lemmas.

\subsection{Proof of Theorem \ref{theorem harmonic limit}}
\label{sub2}
Theorem \ref{theorem harmonic limit} can be proved according to the following outline: first, we show that $n \harm_{D_n}(0)$ has finite and positive upper and lower limits: 
\begin{Proposition}
\label{Proposition limsup}
There is a constant $C\in (0,\infty)$ such that 
\beq
\label{limsup}
\limsup_{n\to\infty} n \harm_{D_n}(0)\le C. 
\eeq
\end{Proposition}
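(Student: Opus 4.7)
The plan is to express $\harm_{D_n}(0)$ through the Green's function of the walk killed on $D_n$ and then to estimate that Green's function via two-dimensional potential theory. Since the only lattice neighbors of $0$ lying outside $D_n$ are $(0,\pm 1)$, a last-exit decomposition at the final step before reaching $0$ gives, for every $y\notin D_n$,
\[
P_y\bigl(S_{\tau_{D_n}}=0\bigr)\;=\;\tfrac{1}{4}\bigl[G_{D_n}(y,(0,1))+G_{D_n}(y,(0,-1))\bigr],
\]
where $G_{D_n}(y,z)=E_y\bigl[\sum_{k=0}^{\tau_{D_n}-1}\mathbf 1(S_k=z)\bigr]$. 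Using reflection symmetry across $L_0$ together with $G_{D_n}(y,z)=G_{D_n}(z,y)$, this collapses to
\[
\harm_{D_n}(0)\;=\;\tfrac{1}{2}\lim_{\|y\|\to\infty}G_{D_n}\bigl((0,1),y\bigr).
\]

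I would then apply the strong Markov decomposition $G_{D_n}((0,1),y)=P_{(0,1)}(\tau_y<\tau_{D_n})\cdot G_{D_n}(y,y)$ and bound the two factors separately. The diagonal Green's function $G_{D_n}(y,y)=1/P_y(\tau_{D_n}<\tau_y^{+})$ grows only logarithmically: standard 2D potential-theoretic estimates give $G_{D_n}(y,y)\asymp\log\|y\|$ for $\|y\|\gg n$, because a walker started at a far point $y$ hits the large set $D_n$ on a time scale $\|y\|^2$. For the hitting probability $P_{(0,1)}(\tau_y<\tau_{D_n})$, one uses that a walker started just above $D_n$ escapes to distance $R$ before being absorbed with probability of order $1/(n\log R)$: the factor of $1/n$ reflects the Cauchy-type hitting density $\tfrac{1}{\pi(1+x^2)}$ of $L_0$ from height $1$ (Theorem 8.1.2 in \cite{lawler2010random}), combined with the capacity of the long set $D_n$. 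Multiplying these bounds and taking $\|y\|\to\infty$ yields $\harm_{D_n}(0)\le C/n$.

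The main obstacle is keeping the $1/n$ scaling sharp. Both factors in the Markov decomposition carry logarithmic dependence on $\|y\|$, and these must cancel precisely in the product; a careless estimate yields only $1/(n\log n)$ or worse. Executing the cancellation rigorously demands quantitative control of the potential kernel $a(x)\sim(2/\pi)\log\|x\|$ applied to the elongated set $D_n$. A complementary approach avoids the Green's function machinery altogether: start the walker at $(0,N)$ with $N$ large and observe that its successive visits $Y_k=S_{\sigma_k}$ to $L_0$ form a one-dimensional random walk on $\mathbb{Z}$ whose step-distribution has tail $\sim 1/(\pi z^2)$. The problem then reduces to showing that this heavy-tailed $1$-D walk exits $\mathbb{Z}\setminus[-n,n]$ at the origin with probability $O(1/n)$, which is the discrete analogue of the arcsine density at the midpoint of $[-n,n]$. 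Either way, the crux is the correct interaction between discrete potential theory and the long, thin geometry of $D_n$.
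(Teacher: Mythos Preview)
Your reversibility/Green's-function framework is essentially the paper's first reduction: the paper also writes
\[
\harm_{D_n}(0)=\lim_{R\to\infty}\frac{1}{|\partial^{out}B(0,R)|}\,E_0\!\left[\#\text{ visits to }\partial^{out}B(0,R)\text{ in }[0,\tau_{D_n})\right],
\]
and the log cancellation you flag as the main obstacle is resolved by working with the whole sphere $\partial^{out}B(0,R)$ rather than a single far point $y$. Concretely, the paper bounds the expected number of visits by $P_0(\tau_R<\tau_{D_n})/\min_{w}P_w(\tau_{D_n}<\tau_R)$, invokes Lemmas~3--4 of \cite{kesten1987hitting} to get $\min_w P_w(\tau_{D_n}<\tau_R)\ge c/(R\log R)$, decomposes $P_0(\tau_R<\tau_{D_n})$ at radius $2n$, and uses the complementary Kesten bound $\max_{z\in\partial^{out}B(0,2n)}P_z(\tau_R<\tau_{C_n})\le C/\log R$ on the outer piece. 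The $\log R$ factors cancel exactly, leaving
\[
\harm_{D_n}(0)\le C\,P_0(\tau_{2n}<\tau_{D_n}).
\]
So your concern about the logs is real, but the fix is to quote Kesten rather than build the potential-kernel argument from scratch.

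The genuine gap in your proposal is the $1/n$ input itself. You assert $P_{(0,1)}(\text{escape to distance }R\text{ avoiding }D_n)\asymp 1/(n\log R)$ by appeal to the Cauchy density and ``capacity of the long set $D_n$'', but this is precisely the nontrivial content of the proposition and you have not supplied an argument. The paper's proof here is entirely elementary and bypasses capacity: with $I_n=[-n,n]\times[0,n]$ one has $I_n\subset B(0,2n)$, so $P_0(\tau_{2n}<\tau_{D_n})\le P_0(S_{T_n}\in\partial^{in}_l I_n\cup\partial^{in}_r I_n\cup\partial^{in}_u I_n)$. A path-reflection across the diagonal $\{x=y\}$ (Lemma~\ref{lemma combinatorial}) gives $P_0(S_{T_n}\in\partial^{in}_l I_n\cup\partial^{in}_r I_n)\le P_0(S_{T_n}\in\partial^{in}_u I_n)$, and the latter is at most $P_0(\tau_{L_n}<\tau_{L_0})=1/(4n)$ by one-dimensional gambler's ruin. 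This rectangle-reflection plus gambler's-ruin step is the idea you are missing; your heavy-tailed 1D walk alternative would also need an argument of comparable strength to produce the sharp $1/n$.
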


\begin{Proposition}
\label{Proposition liminf}

There is a constant $c\in (0,\infty)$ such that 
\beq
\label{liminf}
\liminf_{n\to\infty} n \harm_{D_n}(0)\ge c. 
\eeq
\end{Proposition}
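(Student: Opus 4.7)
The plan is to prove $\liminf_n n \harm_{D_n}(0) \ge c > 0$ by decomposing the random walk from infinity at the intermediate scale $2n$, applying the upper half-plane hitting estimate from Lemma \ref{l2}, and extending via a discrete Harnack inequality. By the Strong Markov Property at the first entrance into the ball $B(0,2n)$, we have
\[
\harm_{D_n}(0) = \lim_{|y| \to \infty} P_y(S_{\tau_{D_n}} = 0) = \sum_{z \in \partial^{\text{in}} B(0,2n)} \mu_\infty(z) \cdot P_z(S_{\tau_{D_n}} = 0),
\]
where $\mu_\infty$ is the limiting hitting distribution of $\partial^{\text{in}} B(0,2n)$ for 2D simple random walk started at infinity. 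By standard 2D SRW estimates (the harmonic measure of a ball from infinity is comparable to the normalized counting measure up to universal constants), the $\mu_\infty$-mass on the upper cap $U := \{z \in \partial^{\text{in}} B(0,2n) : z^{(2)} \ge n\}$ is bounded below by an absolute constant $c_1 > 0$ independent of $n$.

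For a point $z_0$ near the top of $U$ (say $z_0 = (0, 2n-1) \in U$), the inclusion $\{S_{\tau_{L_0}} = 0\} \subset \{S_{\tau_{D_n}} = 0\}$, valid since $0 \in D_n \subset L_0$, combined with Lemma \ref{l2}, yields
\[
P_{z_0}(S_{\tau_{D_n}} = 0) \ge P_{z_0}(S_{\tau_{L_0}} = 0) \sim \frac{1}{2\pi n}.
\]
To extend this bound to all $z \in U$, I would apply the discrete Harnack inequality to the nonnegative harmonic function $f(w) := P_w(S_{\tau_{D_n}} = 0)$ on $\mathbb{Z}^2 \setminus D_n$. Since every $z \in U$ satisfies $z^{(2)} \ge n$, the ball $B(z, n)$ is disjoint from $L_0 \supset D_n$; hence $z_0$ and any $z \in U$ can be joined by a chain of $O(1)$ overlapping balls of radius $n/4$ contained in $\{w : w^{(2)} \ge n/2\}$, and iterating Harnack yields $f(z) \ge c_2 f(z_0) \ge c_3/n$ uniformly on $U$.

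Combining the three estimates,
\[
\harm_{D_n}(0) \ge \mu_\infty(U) \cdot \min_{z \in U} f(z) \ge \frac{c_1 c_3}{n},
\]
proving the claim. The main obstacle is ensuring the Harnack constants are independent of $n$; this is handled because the upper cap $U$ remains at macroscopic distance $\Theta(n)$ from $D_n$, so the chain of Harnack balls avoiding $D_n$ has $O(1)$ length uniformly in $n$. A secondary technical step is justifying that $\mu_\infty$ is indeed comparable to uniform counting on $\partial^{\text{in}}B(0,2n)$; this follows from reflection symmetry across the $x$-axis (giving mass $1/2$ on each half-disk) together with a standard Harnack-based argument for $\mu_\infty$ restricted to the upper half, ensuring no concentration near the equator.
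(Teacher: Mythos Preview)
Your argument is correct but takes a genuinely different route from the paper. The paper does not decompose at $\partial B(0,2n)$ and then push forward via Harnack. Instead, it first establishes (using the time-reversibility formula of Kesten together with Lemmas~3.2--3.3 of \cite{procaccia2017stationary}) the two-sided comparison
\[
c\,P_0\big(\tau_{c_0 n}<\tau_{D_n}\big)\le \harm_{D_n}(0)\le C\,P_0\big(\tau_{2n}<\tau_{D_n}\big),
\]
so that the harmonic measure is sandwiched by escape probabilities. The lower bound is then immediate from the one-line gambler's-ruin estimate $P_0(\tau_{kn}<\tau_{D_n})\ge P_0(\tau_{L_{kn}}<\tau_{L_0})=1/(4kn)$.

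The trade-off: the paper's route is shorter \emph{in context}, because the escape-probability comparison is already needed for the companion upper bound (Proposition~\ref{Proposition limsup}), and the final gambler's-ruin step is trivial. Your route is more self-contained for the lower bound alone---it avoids invoking the external Lemmas~3.2--3.3 of \cite{procaccia2017stationary} and Lemmas~3--4 of \cite{kesten1987hitting}---but you pay by having to justify two auxiliary facts: that the harmonic measure from infinity on $\partial^{\text{in}}B(0,2n)$ is comparable to uniform (in particular charges the upper cap with mass $\ge c_1$), and that a bounded-length Harnack chain connects all of $U$ to $z_0$ at scale $n$. Both are standard, and you identify them correctly; just be aware that the paper sidesteps them entirely by reducing to a one-dimensional escape estimate.
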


The two propositions above guarantee that the decaying rate of $\harm_{D_n}(0)$ is of order $1/n$. To show $\limsup=\liminf$, we further show the following coupling result:

\begin{Proposition}
\label{Proposition coupling}

For any $\ep>0$ there is a $\delta>0$ such that for all sufficiently large $n$ and any $x\in [-\delta n, \delta n]\times \{0\}$, we have 
\beq
\label{coupling}
\Big|\harm_{D_n}(0)-\harm_{D_n}(x)\Big|<\frac{\ep}{n}
\eeq
\end{Proposition}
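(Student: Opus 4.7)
The plan is to use translation invariance together with a reflection symmetry to rewrite $\harm_{D_n}(0)-\harm_{D_n}(x)$ as a sum supported near the endpoints of $D_n$, and then bound that sum using Proposition \ref{Proposition limsup}. By translation invariance of simple random walk, $\harm_{D_n}(0) = \harm_{D_n + x}(x)$ with $D_n + x := D_n + (x^{(1)},0)$; we may assume $x^{(1)}\ge 0$. The task then reduces to bounding $|\harm_{D_n}(x)-\harm_{D_n+x}(x)|$, a comparison at the same point $x$ for two nearly identical segments.

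Set $D:=D_n\cup(D_n+x)$, $R:=D_n\cap(D_n+x)$, $E_1:=D_n\setminus R$, and $E_2:=(D_n+x)\setminus R$. Each $E_i$ consists of $x^{(1)}\le \delta n$ points and lies at one end of the length-$(2n+x^{(1)}+1)$ segment $D$, while $x\in R$. Decomposing a walk from infinity by its first hit of $D$ and applying the strong Markov property (contributions from $R$ cancel), one obtains
\beq
\harm_{D_n}(x)-\harm_{D_n+x}(x) = \sum_{z\in E_2}\harm_D(z)\,P_z(S_{\tau_{D_n}}=x) - \sum_{z\in E_1}\harm_D(z)\,P_z(S_{\tau_{D_n+x}}=x).
\eeq
The set $D$ is symmetric about the vertical line $\{a^{(1)}=x^{(1)}/2\}$; reflection across this line swaps $D_n\leftrightarrow D_n+x$, $E_1\leftrightarrow E_2$, and $0\leftrightarrow x$, while preserving both $\harm_D$ and the walk law. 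Applying this reflection to the first sum and recombining,
$$
|\harm_{D_n}(x)-\harm_{D_n+x}(x)|\;\le\;\sum_{z\in E_1}\harm_D(z)\,\bigl(P_z(S_{\tau_{D_n+x}}=0)+P_z(S_{\tau_{D_n+x}}=x)\bigr).
$$

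Two estimates then close the argument. First, for $z\in E_1$ (one lattice step from the left end of $D_n+x$) and $w\in\{0,x\}\subset[-\delta n,\delta n]\times\{0\}$, I would establish the Harnack-type pointwise bound $P_z(S_{\tau_{D_n+x}}=w)\le C_1/n$: conditioning on whether the walk exits a ball $B(z,cn)$ before hitting $D_n+x$, either the subsequent hitting law is comparable, via harmonicity, to the harmonic measure from infinity (hence $\le C_1/n$ at $w$ by Proposition \ref{Proposition limsup}), or the walk hits $D_n+x$ inside the ball---but then it cannot hit $w$, since $|z-w|\ge(1-\delta)n\gg cn$ for $\delta,c$ small. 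Second, an arcsine-type total-mass bound $\sum_{z\in E_1}\harm_D(z)\to 0$ as $\delta\to 0$ uniformly in $n$, obtained by combining the ballot-type estimate of Lemma \ref{lemma combinatorial} with Proposition \ref{Proposition limsup} applied to sub-segments of $D$ of length comparable to $x^{(1)}$. Combining the two estimates yields $|\harm_{D_n}(x)-\harm_{D_n+x}(x)|=o_\delta(1)/n$, so for any $\epsilon>0$ one can choose $\delta$ sufficiently small that the difference is below $\epsilon/n$ for all large $n$. The main obstacle is the uniform pointwise bound in the first estimate: Proposition \ref{Proposition limsup} gives only ``from infinity'' control of hitting probabilities, and propagating it to a starting point $z$ at constant lattice distance from the segment requires the careful exit-time patching sketched above.
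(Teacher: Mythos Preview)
Your approach is genuinely different from the paper's. The paper proves Proposition \ref{Proposition coupling} by an explicit coupling: it represents $\harm_{D_n}(\cdot)$ through the escape-to-$\partial^{out}B(0,2n)$ distribution, then couples the walks from $0$ and from $x$ by running them in parallel (shifted) until they exit a thin box of height $\ep n$, and afterwards glues them via a maximal coupling whose failure probability is controlled by a parabolic Harnack inequality. Your reflection/first-hit decomposition is more elementary in spirit and avoids both the coupling machinery and the appeal to Delmotte's parabolic Harnack inequality.

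The strategy is sound, but the two estimates as you sketch them do not quite close the argument. For Estimate 2, applying Proposition \ref{Proposition limsup} to the sub-segment of radius $j$ centred at $z=(-n+j,0)$ gives only $\harm_D(z)\le C/j$, hence $\sum_{z\in E_1}\harm_D(z)\le C\log(x^{(1)})\sim C\log(\delta n)$, which is \emph{not} $o_\delta(1)$ uniformly in $n$. The true arcsine bound $\harm_D(z)\le C/\sqrt{nj}$ is what you need here, and that does not follow from Lemma \ref{lemma combinatorial} and Proposition \ref{Proposition limsup} alone; it requires either the Beurling estimate or an invariance-principle argument of the type in Lemma \ref{brownian}. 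For Estimate 1, the claim that the hitting law from $y\in\partial B(z,cn)$ is ``comparable to harmonic measure from infinity'' is not correct as stated: the exit point $y$ can sit at distance $1$ from $D_n+x$ (just not near $w$), and there Harnack in the usual form does not apply. One instead needs the half-plane Poisson-kernel bound $P_y(S_{\tau_{D_n+x}}=w)\le C\,|y^{(2)}|/|y-w|^2$, or the reversibility argument the paper uses in \eqref{difference 5}--\eqref{difference 6}, which produces a $(\log n)^2/n$ bound.

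There is, however, a clean fix hiding in your own outline. In your decomposition of Estimate 1 you condition on exiting $B(z,cn)$ before hitting $D_n+x$; but $z\in E_1$ lies at distance $d_z\le\delta n$ from $D_n+x$, so by the discrete Beurling estimate (as used in the paper in \eqref{away 3} and \eqref{difference 4_5}) that exit has probability at most $C\sqrt{d_z/n}\le C\sqrt{\delta}$. If you \emph{retain} this factor rather than bounding it by $1$, your Estimate 1 becomes $P_z(S_{\tau_{D_n+x}}=w)\le C\sqrt{\delta}\cdot C_1/n$, and then the trivial bound $\sum_{z\in E_1}\harm_D(z)\le 1$ suffices; Estimate 2 is not needed at all. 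With this adjustment (and a careful justification of the $C_1/n$ bound from the exit point, e.g.\ via the paper's reversibility argument), your proof goes through and is arguably simpler than the paper's coupling proof.
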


Let $\bar B(0,R) = \{ x \in \mathbb{R}^2 : ||x||_2 < R \}$ be the continuous ball of radius $R$ in $\RR^2$. For standard Brownian motion $B(t)$ and subset $A\subset \RR^2$, define stopping time 
$$
T_A=\inf\{t\ge 0, \ B(t)\in A\}.
$$
For subset $A\subset \RR^2$, $\bharm_A$ denotes the continuous harmonic measure with respect to $A$.

\begin{lemma}
\label{brownian}
Fix $\delta \in (0,1)$, then
$$
\lim_{n\to\infty} \harm_{D_{n}}\left( [-\delta n, \delta n]\times \{0\}\right) = \bharm_{[-1,1] \times \{0\} } ( [-\delta, \delta]\times \{0\} ) .
$$

\end{lemma}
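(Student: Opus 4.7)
The plan is to realize both sides of the asserted identity as limits of hitting distributions starting from a large circle, and to transfer between the discrete and continuous pictures by Donsker's invariance principle. Fix a radius $R>1$ and a reference direction $y=(0,R)\in\partial\bar B(0,R)$. For each $n$, let $y_n\in\ZZ^2$ be a lattice site within distance one of $ny$, and define
\beq
p_n(R):=P_{y_n}\bigl(S_{\tau_{D_n}}\in[-\delta n,\delta n]\times\{0\}\bigr),\qquad q(R):=\BP^{y}\bigl(B_{T_{[-1,1]\times\{0\}}}\in[-\delta,\delta]\times\{0\}\bigr).
\eeq
By the definitions of $\harm_{D_n}$ and $\bharm_{[-1,1]\times\{0\}}$ as harmonic measures from infinity, together with the asymptotic rotational symmetry of planar harmonic functions, for each fixed $n$ one has $p_n(R)\to\harm_{D_n}([-\delta n,\delta n]\times\{0\})$ as $R\to\infty$, and likewise $q(R)\to\bharm_{[-1,1]\times\{0\}}([-\delta,\delta]\times\{0\})$ as $R\to\infty$; the specific choice of direction for $y$ is immaterial in these limits.

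For each fixed $R$, Donsker's invariance principle yields weak convergence of the rescaled walk $t\mapsto n^{-1}S_{\lfloor n^2 t\rfloor}$ started at $y_n$ to a planar Brownian motion $B$ started at $y$, in $C([0,\infty),\RR^2)$. The segment $[-1,1]\times\{0\}$ has positive (logarithmic) capacity, so $T_{[-1,1]\times\{0\}}<\infty$ almost surely, and standard arguments show that both the hitting time and the hitting location are continuous functionals of the Brownian path on a set of full Wiener measure. Hence $n^{-1}S_{\tau_{D_n}}\Rightarrow B_{T_{[-1,1]\times\{0\}}}$ as $n\to\infty$. Since the Brownian limit is absolutely continuous on the interior of $[-1,1]\times\{0\}$ (the equilibrium measure has the arcsine density $\tfrac{1}{\pi\sqrt{1-x^2}}$), the boundary $\{\pm\delta\}\times\{0\}$ carries no mass, so $p_n(R)\to q(R)$ as $n\to\infty$.

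The main obstacle is the exchange of the two limits $R\to\infty$ and $n\to\infty$. Writing $H_n:=\harm_{D_n}([-\delta n,\delta n]\times\{0\})$ and $H:=\bharm_{[-1,1]\times\{0\}}([-\delta,\delta]\times\{0\})$, the triangle inequality gives
\beq
|H_n-H|\;\le\;|H_n-p_n(R)|+|p_n(R)-q(R)|+|q(R)-H|.
\eeq
The second term vanishes as $n\to\infty$ for each fixed $R$ by the previous paragraph, and the third vanishes as $R\to\infty$ by the first; the remaining task is a \emph{uniform-in-$n$} estimate $\limsup_{n\to\infty}|H_n-p_n(R)|\to 0$ as $R\to\infty$. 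This is a standard consequence of 2D potential theory for simple random walk: the asymptotic radial expansion $a(x)=\tfrac{2}{\pi}\log|x|+\kappa+O(|x|^{-2})$ of the potential kernel implies that the bounded discrete harmonic function $x\mapsto P_x(S_{\tau_{D_n}}\in[-\delta n,\delta n]\times\{0\})$ is asymptotically constant in the direction of $x$ once $|x|\gg n$, with deviation of order $(\log R)^{-1}$ on the scale $|x|\asymp Rn$ and the implied constant independent of $n$. With this uniform control in hand, an $\epsilon/3$ argument in the displayed inequality closes the proof.
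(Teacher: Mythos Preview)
Your argument is correct in outline, but it differs from the paper's in how the two limits $R\to\infty$ and $n\to\infty$ are interchanged. You fix a single reference direction $y_n\approx n(0,R)$ and then, in the last paragraph, invoke a uniform-in-$n$ bound $\sup_n|H_n-p_n(R)|\to0$ as $R\to\infty$. That bound is true (the rate is actually $O(1/R)$, via the standard comparison $P_x(S_{\tau_A}=w)=\harm_A(w)\bigl(1+O(\mathrm{diam}(A)/|x|)\bigr)$ from Lawler--Limic), but you leave it as a black box and state a pessimistic rate $(\log R)^{-1}$ without derivation.

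The paper sidesteps this uniform estimate entirely by a sandwich argument. For every $M_0>1$ and every $n$, the strong Markov property at $\partial^{out}B(0,M_0n)$ gives
\[
\min_{y\in\partial^{out}B(0,M_0n)}P_y\bigl(\tau_{[-\delta n,\delta n]\times\{0\}}=\tau_{D_n}\bigr)\;\le\;\harm_{D_n}\bigl([-\delta n,\delta n]\times\{0\}\bigr)\;\le\;\max_{y\in\partial^{out}B(0,M_0n)}P_y(\cdots),
\]
so one may first send $n\to\infty$ on each side by the invariance principle (the extrema are then bounded by the Brownian $\inf$ and $\sup$ over $z\in\partial\bar B(0,M_0)$), and only afterwards let $M_0\to\infty$, using solely the continuous fact that Brownian hitting probabilities from large circles converge to $\bharm_{[-1,1]\times\{0\}}$. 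This makes the paper's proof more self-contained: no discrete uniform estimate is needed. Your approach works too, once the potential-theory input is stated and referenced precisely, and has the minor advantage of identifying a quantitative rate; the paper's route trades that for elementariness.
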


Once one has shown Proposition \ref{Proposition limsup}-\ref{brownian}, the proof of Theorem \ref{theorem harmonic limit} is mostly straightforward. Now suppose the limit in \eqref{harmonic limit} does not exist. Then by Proposition \ref{Proposition limsup} we must have 
\beq
\label{contradiction 1}
0<\liminf_{n\to\infty} n \harm_{D_n}(0)<\limsup_{n\to\infty} n \harm_{D_n}(0)<\infty. 
\eeq
Let 
$$
\ep_0=\frac{\limsup_{n\to\infty} n \harm_{D_n}(0)-\liminf_{n\to\infty} n \harm_{D_n}(0)}{5}>0. 
$$
By Proposition \ref{Proposition coupling}, we have there are $\delta_0>0$ and $N_0<\infty$ such that for all $n>N_0$ and any $x\in  [-\delta_0 n, \delta_0 n]\times \{0\}$
$$
\Big|\harm_{D_n}(0)-\harm_{D_n}(x)\Big|<\frac{\ep}{n}. 
$$
Moreover, for any $N>N_0$, there are $n_1,n_2>N$ such that 
$$
 n_1\harm_{D_{n_1}}(0)<\liminf_{n\to\infty} n \harm_{D_n}(0)+\ep_0
$$
and that 
$$
 n_2\harm_{D_{n_2}}(0)>\limsup_{n\to\infty} n \harm_{D_n}(0)-\ep_0. 
$$
At the same time, we have for the $\delta_0>0$ defined above, 
\beq
\label{contradiction 2}
\begin{aligned}
\harm_{D_{n_1}}\left( [-\delta_0 n_1, \delta_0 n_1]\times \{0\}\right)&=\sum_{x\in [-\delta_0 n_1, \delta_0 n_1]\times \{0\}}\harm_{D_{n_1}}(x)\\
&\le \frac{\lfloor\delta_0 n_1\rfloor+1}{n_1}\left[\liminf_{n\to\infty} n \harm_{D_n}(0)+2\ep_0 \right]
\end{aligned}
\eeq
and
\beq
\label{contradiction 3}
\begin{aligned}
\harm_{D_{n_2}}\left( [-\delta_0 n_2, \delta_0 n_2]\times \{0\}\right)&=\sum_{x\in [-\delta_0 n_2, \delta_0 n_2]\times \{0\}}\harm_{D_{n_2}}(x)\\
&\ge \frac{\lfloor\delta_0 n_2\rfloor+1}{n_2}\left[\limsup_{n\to\infty} n \harm_{D_n}(0)-2\ep_0 \right]. 
\end{aligned}
\eeq
But by Lemma \ref{brownian},
$$
\lim_{n\to\infty} \harm_{D_{n}}\left( [-\delta_0 n, \delta_0 n]\times \{0\}\right) = \bharm_{[-1,1] \times \{0\}} ( [-\delta_0, \delta_0]\times \{0\} ) ,
$$
which contradicts with \eqref{contradiction 2} and \eqref{contradiction 3}. \qed

\subsection{Proof of Theorem \ref{theorem part 2}}
\label{sub3}

Define $\alpha_1=(1+\alpha)/2\in (0,1)$ and $Box(n)=[-n,n]\times \left[0,\lfloor n^{\alpha_1}\rfloor\right]$. Recalling the definition of regular harmonic measure, and the fact that $A_n\subset Box(n)$ for all sufficiently large $n$, we have for any $x\in A\setminus L_0$, 
$$
\harm_{A_n}(x)=\sum_{y\in \partial^{in}Box(n)} \harm_{Box(n)}(y) P_y\left(S_{\bar \tau_{A_n}}=x\right). 
$$
Then define
$$
\begin{aligned}
&\partial^{in}_uBox(n)=[-n,n]\times\left\{ \lfloor n^{\alpha_1}\rfloor\right\}\\
&\partial^{in}_dBox(n)=[-n,n]\times\left\{0\right\}\\
&\partial^{in}_lBox(n)=\{-n\},\times\left[1, \lfloor n^{\alpha_1}\rfloor-1\right]\\
&\partial^{in}_rBox(n)=\{n\},\times\left[1, \lfloor n^{\alpha_1}\rfloor-1\right]
\end{aligned}
$$
to be the four edges of $\partial^{in}Box(n)$. Noting that $L_0 \subset A$, it is easy to see that for any $y\in \partial^{in}_dBox(n)=[-n,n]\times\left\{0\right\}$, $P_y\left(S_{\bar \tau_{A_n}}=x\right)=0$. Moreover, define $\alpha_2=(7+\alpha)/8$, and 
$$
l_n=\left[- \lfloor n^{\alpha_2}\rfloor, \lfloor n^{\alpha_2}\rfloor\right]\times\left\{ \lfloor n^{\alpha_1}\rfloor\right\}
$$
to be the middle section of $\partial^{in}_uBox(n)$ and denote $l_n^c=\partial^{in}_lBox(n)\cup\partial^{in}_rBox(n)\cup\partial^{in}_uBox(n)\setminus l_n$. We further have the decomposition as follows: 
\beq
\label{part 2 1}
\begin{aligned}
\harm_{A_n}(x)&=\sum_{y\in l^c_n} \harm_{Box(n)}(y) P_y\left(S_{\bar \tau_{A_n}}=x\right)+\sum_{y\in l_n} \harm_{Box(n)}(y) P_y\left(S_{\bar \tau_{A_n}}=x\right).
\end{aligned}
\eeq
From \eqref{part 2 1}, we first note that $\harm_{Box(n)}(y)$ sums up to 1, which implies that 
\beq
\sum_{y\in l^c_n} \harm_{Box(n)}(y) P_y\left(S_{\bar \tau_{A_n}}=x\right) \le  \max_{y\in l^c_n} P_y\left(S_{\bar \tau_{A_n}}=x\right).
\eeq
Thus our first step is to prove 
\begin{Proposition}
\label{lemma away}
For $Box(n)$, $l_n$ and $l_n^c$ defined as above, we have 
\beq
\label{away_0}
\lim_{n\to\infty} n \cdot \max_{y\in l^c_n} P_y\left(S{\bar \tau_{A_n}}=x\right)=0. 
\eeq
\end{Proposition}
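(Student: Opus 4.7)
The plan is to dominate $P_y(S_{\bar\tau_{A_n}}=x)$ by a first-passage probability for the walk killed on the segment $D_n:=[-n,n]\times\{0\}$, and then apply a half-plane Green's function estimate to show this probability is $o(1/n)$ uniformly on $l_n^c$. Since $L_0\subset A$ and $x\in A\setminus L_0$, one has $D_n\subset A_n\setminus\{x\}$, so for any $y\notin A_n$,
\[
P_y\bigl(S_{\bar\tau_{A_n}}=x\bigr)\;\le\;P_y(\tau_x<\tau_{D_n}),
\]
and the left side vanishes for $y\in A_n\cap l_n^c$ (since $y\ne x$ for $n$ large). By time reversal, exactly as in Lemma~\ref{l5},
\[
P_y(\tau_x<\tau_{D_n})\;\le\;G_{D_n}(x,y):=E_x\bigl[\#\{k\ge 0:\,S_k=y,\,\tau_{D_n}>k\}\bigr].
\]

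The key estimate I would then establish is that, uniformly for $y\in l_n^c$,
\[
n\cdot G_{D_n}(x,y)\;\le\;\frac{C\,n^{(\alpha-1)/4}}{\log n}\;\longrightarrow\;0.
\]
The cleanest derivation uses the Joukowski-type conformal uniformization $z\mapsto(z+\sqrt{z^2-n^2})/n$ sending $\upspace\setminus D_n$ to the exterior of the unit disc, where $G_{D_n}$ becomes the standard Green's function killed on the boundary, and the images of $y\in l_n^c$ all lie within radial distance at most $C\,n^{(\alpha_1-1)/2}=C\,n^{(\alpha-1)/4}$ of the unit circle, while the image of $x$ sits a distance $\Theta(1/n)$ from $w=i$; plugging into the universal bound $G\lesssim\epsilon_w\epsilon_{w'}/|w-w'|^2$ (with $|w_y-w_x|=\Theta(1)$) gives the claimed decay. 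Equivalently, one can decompose the walk from $x$ into upper-half excursions, bound the number of returning excursions geometrically using the Cauchy-type hitting density of Theorem~8.1.2 of \cite{lawler2010random} (as in Lemma~\ref{l6}), and sum over excursions.

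The case split in $l_n^c$ is then immediate from this uniform bound: for $y=(k,\lfloor n^{\alpha_1}\rfloor)$ on the top with $|k|>\lfloor n^{\alpha_2}\rfloor$ the extremal case is $|k|\to n$ giving harmonic distance $\sim n^{(\alpha_1-1)/2}$, while for $y=(\pm n,h)$ on a vertical side with $h\le\lfloor n^{\alpha_1}\rfloor-1$ the harmonic distance is $\sqrt{h/n}\le n^{(\alpha_1-1)/2}$. Both regimes therefore give $n\cdot G_{D_n}(x,y)\le Cn^{(\alpha-1)/4}/\log n\to 0$, with the exponents calibrated exactly by the design $\alpha_1=(1+\alpha)/2$, $\alpha_2=(7+\alpha)/8$.

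The main obstacle is the Green's function estimate itself: the naive half-plane bound $G_{L_0}(x,y)\lesssim y^{(2)}/|x-y|^2$ is not a valid upper bound for $G_{D_n}$, because the walk can exit $\upspace$ through the "free" boundary $L_0\setminus D_n$ and re-enter, contributing additional mass to $G_{D_n}$. The Joukowski (or equivalent excursion) argument is what correctly captures this correction, and once it is in place the negative exponent $(\alpha-1)/4<0$ dictated by $\alpha_1,\alpha_2$ closes the proof.
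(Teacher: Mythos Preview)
Your reduction $P_y(S_{\bar\tau_{A_n}}=x)\le G_{D_n}(x,y)$ via time reversal is valid and the conformal picture is the right heuristic, but the argument as written has genuine gaps, and the route is quite different from the paper's. The paper does not estimate $G_{D_n}(x,y)$ globally. Instead it inserts the vertical half-line $T_y=\{\lfloor y^{(1)}/2\rfloor\}\times[0,\infty)$, decomposes the walk from $y$ at $\tau_{T_y}$, and bounds each factor by purely discrete tools: a Beurling estimate from \cite{lawler2004beurling} gives $P_y(\tau_{T_y}<\bar\tau_{A_n})\le Cn^{-(\alpha_2-\alpha_1)/2}$, while reversibility plus two short lemmas (one bounding $P_x(\tau_{T_y}<\tau_{D_n})\le cn^{-\alpha_2}$, the other bounding $P_z(\tau_{A_n}<\tau_z)\ge c(\log n)^{-2}$) give $\max_{z\in T_y,\,z^{(2)}<n^4}P_z(S_{\bar\tau_{A_n}}=x)\le Cn^{-\alpha_2}(\log n)^2$. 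The product yields $n\cdot P_y(\cdots)\le Cn^{(\alpha-1)/16}(\log n)^2\to 0$; your claimed exponent $(\alpha-1)/4$ is sharper but not actually established.

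The concrete problems in your sketch: (i) the claim $|w_y-w_x|=\Theta(1)$ is false for $y=(k,\lfloor n^{\alpha_1}\rfloor)$ with $|k|$ just above $n^{\alpha_2}$; there both images sit near $w=i$ and $|w_y-w_x|\asymp n^{\alpha_2-1}\to 0$. The bound $\epsilon_{w_x}\epsilon_{w_y}/|w_y-w_x|^2$ still produces the exponent $(\alpha-1)/4$ at that end, but only because $\alpha_2=(7+\alpha)/8$ is designed precisely to balance the two extremes; your case split checks only the corner. (ii) More seriously, the Joukowski bound is for the \emph{continuous} Green's function, while you need the discrete one with $x$ at lattice-scale distance $O(1)$ from $D_n$; this transfer is nowhere addressed, and the unexplained $1/\log n$ factor (which has no source in the exterior-disc Green's function) signals that the details are not under control. (iii) Minor: the Joukowski map uniformizes $\mathbb{C}\setminus[-n,n]$, not $\upspace\setminus D_n$; the walk killed on $D_n$ is free to enter the lower half plane, which your excursion alternative would have to handle as well.
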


With Proposition \ref{lemma away}, it sufficient for us to concentrate on the asymptotic of 
$$
\sum_{y\in l_n} \harm_{Box(n)}(y) P_y\left(S_{\bar \tau_{A_n}}=x\right).
$$
We are to show that 
\begin{Proposition}
\label{lemma stationary harmonic}
For any $x\in A$ and the truncations $A_n$ defined in \eqref{truncation}
\beq
\label{stationary harmonic}
\lim_{n\to\infty} \sum_{y\in l_n}  P_y\left(S_{\bar \tau_{A_n}}=x\right)= \sharm_A(x). 
\eeq
\end{Proposition}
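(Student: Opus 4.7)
The plan is to split via strong Markov at $\bar\tau_A$ and control two pieces separately. Since $A_n\uparrow A$, for all $n$ sufficiently large the fixed point $x$ lies in $A_n$, and one may write
\[
\sum_{y\in l_n} P_y\big(S_{\bar\tau_{A_n}}=x\big) = \sum_{y\in l_n} P_y\big(S_{\bar\tau_A}=x\big) + \sum_{y\in l_n} P_y\big(S_{\bar\tau_A}\in A\setminus A_n,\, S_{\bar\tau_{A_n}}=x\big),
\]
which I denote $M(n)+E(n)$ respectively.

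For the main term $M(n)$, I would set $m=\lfloor n^{\alpha_1}\rfloor$ so that $l_n\subset L_m$. Lemma~\ref{l5} then shows that the sum of $P_y(S_{\bar\tau_A}=x)$ over $y\in L_m$ with $|y^{(1)}|\le m^{1/\alpha_1}$ (an interval of order $n$) converges to $\sharm_A(x)$. To restrict this summation to $|y^{(1)}|\le n^{\alpha_2}$, I set $\alpha_1':=\alpha_1/\alpha_2$ and verify that $\alpha_1'\in(\alpha,1)$: the upper bound is $\alpha_1<\alpha_2$, and the lower bound reduces to $(\alpha-1)(\alpha+4)<0$, which holds for $\alpha\in(0,1)$. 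Re-running the argument of Lemma~\ref{l5} with $\alpha_1'$ in place of $\alpha_1$ (so that $m^{1/\alpha_1'}=n^{\alpha_2}$) and applying Lemma~\ref{l4} with parameter $\alpha_1'$ produces a tail bound of order $m^{1-1/\alpha_1'}\to 0$. This shows the contribution from $n^{\alpha_2}<|y^{(1)}|\le n$ vanishes, giving $M(n)\to\sharm_A(x)$.

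For the correction $E(n)$, the strong Markov property at $\bar\tau_A$ yields
\[
E(n)=\sum_{z\in A\setminus A_n}\Bigg[\sum_{y\in l_n}P_y\big(S_{\bar\tau_A}=z\big)\Bigg]\,P_z\big(S_{\bar\tau_{A_n}}=x\big).
\]
Exchanging summations, the bracketed factor summed over $z$ equals $\sum_{y\in l_n}P_y(S_{\bar\tau_A}\in A\setminus A_n)$; since $L_0\subset A$ and $A\setminus A_n$ lies outside the strip $[-n,n]\times\mathbb{Z}$, this is bounded by the probability that the walk from $y$ exits the strip before hitting $L_0$. A standard half-strip estimate gives $O(n^{\alpha_1-1})$ per $y\in l_n$, so the total is $O(n^{\alpha_1+\alpha_2-1})$. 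On the other factor, every $z\in A\setminus A_n$ lies outside $Box(n)$, so the walk from $z$ must enter $\partial^{in}Box(n)$ before hitting $A_n$; conditioning on that first entry, Proposition~\ref{lemma away} supplies an $o(1/n)$ bound for entries at $l_n^c$, while for entries at $l_n$ one combines Theorem~\ref{theorem harmonic limit} (giving $\harm_{A_n}(x)=O(1/n)$) with a Harnack-type comparison on $Box(n)$ to obtain a bound of compatible order. Multiplying the two factors then yields $E(n)\to 0$.

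The hardest step will be establishing the uniform bound on $P_z(S_{\bar\tau_{A_n}}=x)$ for $z\in A\setminus A_n$ at a fast enough rate. Proposition~\ref{lemma away} alone does not handle entries of the walk into $Box(n)$ through the top-middle segment $l_n$, so one must combine it with a Harnack comparison on $Box(n)$ or a direct Poisson-kernel estimate showing that a walk starting at $|z^{(1)}|>n$ enters $Box(n)$ through $l_n$ only with small probability. Theorem~\ref{theorem harmonic limit} then caps the worst-case contribution at $O(1/n)$, which when combined with the half-strip bound is enough to close the argument.
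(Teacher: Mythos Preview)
Your decomposition $M(n)+E(n)$ matches the paper's split exactly, and your treatment of $M(n)$ via a re-run of Lemma~\ref{l5} with the modified exponent $\alpha_1'=\alpha_1/\alpha_2$ is a legitimate alternative to the paper's argument (the paper instead handles $M(n)$ by reversibility: $\sum_{z\in L_{\lfloor n^{\alpha_1}\rfloor}\setminus l_n}P_z(S_{\bar\tau_A}=x)=E_x[\#\text{ visits to }L_{\lfloor n^{\alpha_1}\rfloor}\setminus l_n\text{ in }[0,\bar\tau_A)]$ and bounds this ratio of escape probabilities directly).

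The gap is in your treatment of $E(n)$. Your route requires a uniform bound on $\max_{z\in A\setminus A_n}P_z(S_{\bar\tau_{A_n}}=x)$, and the step you flag as hardest is indeed unresolved: Proposition~\ref{lemma away} covers first entries into $Box(n)$ through $l_n^c$, but for entries through $l_n$ you appeal to ``Theorem~\ref{theorem harmonic limit} (giving $\harm_{A_n}(x)=O(1/n)$)'' plus a Harnack comparison. Theorem~\ref{theorem harmonic limit} concerns $\harm_{D_n}(0)$, not $\harm_{A_n}(x)$; and even granting $\harm_{A_n}(x)=O(1/n)$, this is an average from infinity and does not give a pointwise bound on $P_w(S_{\bar\tau_{A_n}}=x)$ for $w\in l_n$ without a Harnack inequality at scale $n$, which is unavailable because $l_n$ sits only $n^{\alpha_1}$ away from $A_n$. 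Your alternative suggestion (show the walk from $|z^{(1)}|>n$ rarely first enters $Box(n)$ through $l_n$) is plausible but you have not carried it out, and it is not a one-line estimate.

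The paper avoids this entirely by applying reversibility to $E(n)$ as well. Writing each summand as a difference of Green-type quantities,
\[
\sum_{y\in l_n}\Big[P_y\big(S_{\bar\tau_{A_n}}=x\big)-P_y\big(S_{\bar\tau_A}=x\big)\Big]
= E_x\Big[\#\text{ visits to }l_n\text{ in }[\tau_A,\tau_{A_n})\Big],
\]
and since $\{\tau_A<\tau_{A_n}\}\subset\{\tau_{\bar T_n}<\tau_{A_n}\}$ with $\bar T_n=\{-n,n\}\times[0,\infty)$, this expectation is at most
\[
\frac{P_x(\tau_{\bar T_n}<\tau_{A_n})}{\min_{z\in l_n}P_z(\tau_{A_n}<\tau_{l_n})}
\ \le\ \frac{C/n}{c/n^{\alpha_1}} = Cn^{\alpha_1-1}\longrightarrow 0.
\]
This two-line bound replaces the entire program of controlling $P_z(S_{\bar\tau_{A_n}}=x)$ uniformly over $z\in A\setminus A_n$, and is the idea you are missing.
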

\noindent and that 
\begin{Proposition}
\label{lemma coupling two}
For any $\ep>0$, there is a $N_0<\infty$ such that for all $n\ge N_0$ and all $y\in l_n$, 
\beq
\label{coupling two 1}
\left|2\harm_{Box(n)}(y)-\harm_{D_n}(0)\right|<\ep/n. 
\eeq
\end{Proposition}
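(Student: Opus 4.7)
The strategy is to reduce the claim in two steps before applying an invariance-principle argument in the spirit of Lemma~\ref{brownian}. Fix $y\in l_n$ and set $y_0=(y^{(1)},0)\in D_n$. Since $|y^{(1)}|\le\lfloor n^{\alpha_2}\rfloor$ with $\alpha_2<1$, we have $|y^{(1)}|\le\delta n$ for any fixed $\delta\in(0,1)$ once $n$ is large, so Proposition~\ref{Proposition coupling} gives $|\harm_{D_n}(0)-\harm_{D_n}(y_0)|<\ep/(2n)$. It therefore suffices to show $|2\harm_{Box(n)}(y)-\harm_{D_n}(y_0)|<\ep/(2n)$. Next, writing $y_0^{\pm}=(y^{(1)},\pm 1)$ and decomposing $\harm_{D_n}(y_0)=\harm_{D_n}^{\uparrow}(y_0)+\harm_{D_n}^{\downarrow}(y_0)$ according to whether the walk enters $D_n$ at $y_0$ through $y_0^+$ or $y_0^-$, the reflection $z\mapsto(z^{(1)},-z^{(2)})$ preserves the law of SRW on $\mathbb{Z}^2$ and fixes $D_n$, identifying these two contributions, so $\harm_{D_n}^{\uparrow}(y_0)=\tfrac{1}{2}\harm_{D_n}(y_0)$. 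Since for $y\in l_n$ the only neighbor of $y$ outside $Box(n)$ is $y^+=y+(0,1)$, every walk from infinity that hits $Box(n)$ at $y$ does so through $y^+$. The target then reduces to showing
$$
\left|\harm_{Box(n)}(y)-\harm_{D_n}^{\uparrow}(y_0)\right|<\ep/(4n).
$$

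For this last comparison the plan is to rescale lengths by $1/n$ and appeal to the invariance principle. Under rescaling, $Box(n)/n=[-1,1]\times[0,n^{\alpha_1-1}]$ converges in Hausdorff distance to $[-1,1]\times\{0\}=D_n/n$, and the four points $y/n$, $y^+/n$, $y_0/n$, $y_0^+/n$ all converge to $(0,0)$ since $|y^{(1)}|/n\le n^{\alpha_2-1}\to 0$. An argument parallel to the proof of Lemma~\ref{brownian} should then show that both $n\,\harm_{Box(n)}(y)$ and $n\,\harm_{D_n}^{\uparrow}(y_0)$ converge, uniformly in $y\in l_n$, to the same limit---half the density of the continuous harmonic measure $\bharm_{[-1,1]\times\{0\}}$ at the origin. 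This would give the required $\ep/(4n)$ bound for all $n$ large enough and all $y\in l_n$, completing the proof of Proposition~\ref{lemma coupling two}.

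The principal obstacle is securing the \emph{uniform} convergence in the invariance-principle step at rate $o(1)$. A natural route is a direct random-walk coupling: for a walk started at $z$ with $|z|$ large, first condition on its entry point into the intermediate circle $\partial B(0,2n)$ (where the invariance principle gives a nearly rotation-invariant distribution), and then compare $P_z(S_{\tau_{Box(n)}}=y)$ with $P_z(S_{\tau_{D_n}}=y_0,\,S_{\tau_{D_n}-1}=y_0^+)$ inside $B(0,2n)$ by means of gambler's-ruin type estimates in the thin strip $[-n,n]\times[0,\lfloor n^{\alpha_1}\rfloor]$. The vanishing aspect ratio $n^{\alpha_1-1}\to 0$ of this strip governs the size of the discrepancy, which should be $o(1/n)$ exactly because $\alpha_1<1$; making this quantitative, uniformly over $y\in l_n$, is the technical heart of the proof.
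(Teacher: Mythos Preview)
Your reduction is sound and close to the paper's. Using Proposition~\ref{Proposition coupling} to pass from $\harm_{D_n}(0)$ to $\harm_{D_n}(y_0)$, and the reflection $\harm_{D_n}(y_0)=2\harm_{D_n}^{\uparrow}(y_0)$, leaves exactly the comparison $|\harm_{Box(n)}(y)-\harm_{D_n}^{\uparrow}(y_0)|$. The paper performs an equivalent reduction (sandwiching $Box(n)$ between symmetric boxes of half-widths $m(y,n)$ and $M(y,n)$ and using Theorem~\ref{theorem harmonic limit} instead of Proposition~\ref{Proposition coupling}), arriving at the same core comparison, stated as Lemma~\ref{lemma difference}.

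The gap is in your final step. An argument ``parallel to Lemma~\ref{brownian}'' cannot do the job: that lemma gives convergence of the harmonic measure of a \emph{fixed-proportion subinterval}, not of a single lattice point. Passing from interval convergence to convergence of $n\harm_{D_n}(0)$ is exactly what required the extra work of Propositions~\ref{Proposition limsup}--\ref{Proposition coupling}; the invariance principle alone does not give densities. Your fallback of a ``direct coupling with gambler's-ruin estimates in the thin strip'' is not a proof as written: you have not said what two walks are being coupled, nor why the aspect ratio $n^{\alpha_1-1}$ forces the discrepancy to be $o(1/n)$ rather than merely $o(1)$.

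The paper's actual argument for this step is a short path-decomposition, not a coupling. After translation and reflection, with $\widehat{Box}(m,n)=[-n,n]\times[-m,0]$ and $m\le 2n^{\alpha_1}$, one observes that every trajectory contributing to $\harm_{D_n}^{\uparrow}(0)$ but not to $\harm_{\widehat{Box}(m,n)}(0)$ must touch $\widehat{Box}(m,n)\setminus D_n$ before reaching $0$ through $(0,1)$. By the strong Markov property this yields
\[
0\;\le\;\tfrac12\harm_{D_n}(0)-\harm_{\widehat{Box}(m,n)}(0)\;\le\;\max_{w\in\widehat{Box}(m,n)\setminus D_n} P_w\bigl(\tau_{(0,1)}<\tau_{D_n}\bigr).
\]
The right-hand side is then bounded by combining the discrete Beurling estimate (escaping a ball of radius $\sim n$ while avoiding a segment at distance $\le 2n^{\alpha_1}$ costs $O(n^{-(1-\alpha_1)/2})$) with the hitting bound $P_z(\tau_{(0,1)}<\tau_{D_n})=O(n^{-1}(\log n)^2)$ for $z\in\partial^{out}B(0,cn)$, giving the explicit rate $Cn^{-(3-\alpha_1)/2}(\log n)^2=o(n^{-1})$. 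This path-decomposition bound is the missing ingredient in your proposal.
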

Once we have proved the lemmas above, Theorem \ref{theorem part 2} follows immediately from the combination of Proposition \ref{lemma away}- \ref{lemma coupling two}, together with Theorem \ref{theorem harmonic limit}. \qed

\subsection{Existence of upper and lower limit}
\subsubsection{Bounds between harmonic measure and escaping probability}
In this subsection we prove Proposition \ref{Proposition limsup} and \ref{Proposition liminf}. First, recalling the notation 
$$
\harm_D(y,x)= P_y(\tau_D=\tau_x),
$$ 
with standard time reversibility argument, see Lemma 2 of \cite{kesten1987hitting}, we have for any $n$ and $x\in D_n$
$$
\begin{aligned}
\harm_{D_n}(x)&=\lim_{R\to\infty} \frac{1}{\left| \partial^{out}B(0,R)\right|}\sum_{y\in \partial^{out}B(0,R)} \harm_{D_n}(y,x)\\
&=\lim_{R\to\infty} \frac{1}{\left| \partial^{out}B(0,R)\right|}E_x\left[\text{number of visits to $\partial^{out}B(0,R)$ in } [0,\tau_{D_n}) \right]. 
\end{aligned}
$$
Note that there is a finite constant $C$ independent to $R$ such that 
$$
\frac{1}{\left| \partial^{out}B(0,R)\right|}\le \frac{C}{R}. 
$$
At the same time, define $C_n=\left[-\lfloor n/2\rfloor,0\right]\times \{0\}\subset D_n$ and apply Lemma 3-4 of \cite{kesten1987hitting} with $r=n$, 
$$
\begin{aligned}
&E_x\left[\text{number of visits to $\partial^{out}B(0,R)$ in } [0,\tau_{D_n}) \right]\\
&\le \frac{P_x(\tau_{R}<\tau_{D_n})}{\min_{w\in \partial^{out}B(0,R)} P_w\left(\tau_{D_n}<\tau_{R} \right)}\\
&\le C R\log(R) P_x(\tau_{R}<\tau_{D_n})\\
&=C R\log(R)\left( \sum_{z\in \partial^{out}B(0,2n)} P_x\left(\tau_{2n}<\tau_{D_n}, S_{\tau_{2n}}=z\right) P_z(\tau_{R}<\tau_{D_n})\right)\\
&\le C R\log(R)\left( \sum_{z\in \partial^{out}B(0,2n)} P_x\left(\tau_{2n}<\tau_{D_n}, S_{\tau_{2n}}=z\right) P_z(\tau_{R}<\tau_{C_n})\right)\\
&\le C R\log(R)P_x\left(\tau_{2n}<\tau_{D_n}\right)\max_{z\in \partial^{out}B(0,2n)} P_z(\tau_{R}<\tau_{C_n})\\
&\le C R  P_x\left(\tau_{2n}<\tau_{D_n}\right). 
\end{aligned}
$$ 
Thus, there is a finite constant $C$ independent to $n$ such that 
\beq
\label{escape upper}
\harm_{D_n}(x)\le C  P_x\left(\tau_{2n}<\tau_{D_n}\right). 
\eeq
On the other hand, by Lemma 3.2 of \cite{procaccia2017stationary}, there is a constant $C<\infty$ independent to the choice of $n$ and $R\gg n$ such that for all $w\in \partial^{out}B(0,R)$ 
\beq
\label{lemma 3.2}
\BP_w(\tau_{D_n}<\tau_R)\le C[R\log(R)]^{-1}. 
\eeq
Thus 
$$
\begin{aligned}
&E_x\left[\text{number of visits to $\partial^{out}B(0,R)$ in } [0,\tau_{D_n}) \right]\\
&\ge \frac{P_x(\tau_{R}<\tau_{D_n})}{\max_{w\in \partial^{out}B(0,R)} P_w\left(\tau_{D_n}<\tau_{R} \right)}\\
&\ge c R\log(R) P_x(\tau_{R}<\tau_{D_n}).
\end{aligned}
$$ 
At the same time, by Lemma 3.3 of \cite{procaccia2017stationary}, there are constants $2< c_0 <\infty$ and $c>0$ independent to the choice of $n$ and $R\gg n$ such that for any $z\in \partial^{out}B(0,c_0 n)$
\beq
\label{lemma 3.3}
P_z(\tau_{R}<\tau_{D_n})\ge \frac{c}{\log(R)}. 
\eeq
Thus we have 
\begin{align*}
P_x(\tau_{R}<\tau_{D_n})&=\sum_{z\in \partial^{out}B(0,c_0 n)} P_x\left(\tau_{c_0 n}<\tau_{D_n}, S_{\tau_{c_0 n}}=z\right) P_z(\tau_{R}<\tau_{D_n})\\
&\ge c R P_x\left(\tau_{c_0 n}<\tau_{D_n}\right). 
\end{align*}
which implies that 
\beq
\label{escape lower}
\harm_{D_n}(x)\ge c  P_x\left(\tau_{c_0 n}<\tau_{D_n}\right). 
\eeq

\subsubsection{Proof of Proposition  \ref{Proposition limsup}} 
With Lemma \ref{lemma combinatorial} and recalling the fact that $I_n\subset B(0,2n)$, we have that 
\beq
\label{escape upper 1}
\begin{aligned}
\BP_0(\tau_{2n}<\tau_{D_n})&\le \BP_0(\tau_{I_n}<\tau_{D_n})\\
&=P_0\left(S_{T_n}\in L_n\cup \partial^{in}_{r} I_n\cup \partial^{in}_{u} I_n \right)\\
&\le 2P_0\left(S_{T_n}\in \partial^{in}_{u} I_n \right).
\end{aligned}
\eeq
Moreover, note that 
\beq
\label{escape upper 2}
P_0\left(S_{T_n}\in \partial^{in}_{u} I_n \right) \leq P_0\left(\tau_{L_n}<\tau_{L_0}\right)=\frac{1}{4n}. 
\eeq
Thus by  \eqref{escape upper}, \eqref{escape upper 1} and \eqref{escape upper 2}, the proof of Proposition  \ref{Proposition limsup} is complete. \qed

\subsubsection{Proof of Proposition  \ref{Proposition liminf}}

With \eqref{escape lower}, in order to Proposition  \ref{Proposition liminf}, it is sufficient to show that 
\begin{lemma}
\label{lemma lower}
For any $k\ge 2$, there is a $c_k>0$ such that 
$$
 \BP_0\left(\tau_{kn}<\tau_{D_n}\right)\ge \frac{c_k}{n}. 
$$
\end{lemma}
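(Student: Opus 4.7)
The plan is to combine the Strong Markov property at time $1$ with a classical gambler's-ruin estimate applied to the vertical coordinate of the walk. The guiding intuition is that in order to exit $B(0, kn)$ before returning to $D_n$, it suffices for the height coordinate to climb from $1$ to $kn$ before ever touching $0$, and this event has probability exactly $1/(kn)$; any other horizontal excursion is harmless.

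Carrying this out, the first step is to peel off the first move: with probability $1/4$ we have $S_1 = (0,1) \notin D_n$, so by the Strong Markov property it is enough to establish
$$
P_{(0, 1)}\bigl(\tau_{kn} < \tau_{D_n}\bigr) \ge \frac{1}{kn}.
$$
For the walk starting at $(0, 1)$, I would introduce the vertical coordinate $Y_t := S_t^{(2)}$ together with the stopping time
$$
\sigma := \min\bigl\{t \ge 0 : Y_t \in \{0, kn\}\bigr\}.
$$
Since $Y_t$ moves by $\pm 1$ with probability $1/4$ each and is otherwise constant, it is a bounded martingale up to $\sigma$, so optional stopping with $Y_0 = 1$ gives $P_{(0, 1)}(Y_\sigma = kn) = 1/(kn)$. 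On the event $\{Y_\sigma = kn\}$, the walk reaches height $kn$, hence $\|S_\sigma\|_2 \ge kn$ and $\tau_{kn} \le \sigma$; at the same time $Y_t \ge 1$ for every $1 \le t \le \sigma$, so $S_t \notin L_0 \supset D_n$ throughout that interval, whence $\tau_{D_n} > \sigma$. Chaining with the first-step factor $1/4$ then produces $P_0(\tau_{kn} < \tau_{D_n}) \ge 1/(4kn)$, so $c_k = 1/(4k)$ works.

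The argument is essentially elementary and I do not anticipate a serious obstacle; the only point requiring a moment's care is the strict separation $\tau_{kn} < \tau_{D_n}$, which follows from the observation that the height stays at least $1$ throughout $[1, \sigma]$ on the favorable event, so the walk cannot visit $D_n$ before (or simultaneously with) exiting $B(0, kn)$. A second tiny bookkeeping issue is aligning the strong Markov step with the convention $\tau_A = \min\{t\ge 1 : S_t \in A\}$, but because $S_1 = (0,1)$ is neither in $D_n$ nor outside $B(0,kn)$, the shifted stopping times for the walk started from $(0,1)$ differ from the original ones only by an additive constant and their ordering is preserved.
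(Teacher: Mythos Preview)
Your proposal is correct and follows essentially the same route as the paper: the paper simply observes that $\tau_{kn}\le\tau_{L_{kn}}$ and $\tau_{L_0}\le\tau_{D_n}$, so that $\BP_0(\tau_{kn}<\tau_{D_n})\ge\BP_0(\tau_{L_{kn}}<\tau_{L_0})=\tfrac{1}{4kn}$, which is exactly your first-step-plus-gambler's-ruin computation on the vertical coordinate compressed into one line. Your constant $c_k=1/(4k)$ coincides with the paper's.
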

\begin{proof}
Note that for a simple random walk starting from $0$, it is easy to see that 
$$
\tau_{kn}\le \tau_{L_{kn}}, \ \tau_{L_0}\le \tau_{D_n}.
$$
Thus we have 
$$
 \BP_0\left(\tau_{kn}<\tau_{D_n}\right)\ge  \BP_0\left(\tau_{L_{kn}}<\tau_{L_0} \right)=\frac{1}{4kn}
$$
and the proof of this lemma is complete. 
\end{proof}
With Lemma \ref{lemma lower}, the proof of Proposition  \ref{Proposition liminf} is complete. \qed

\subsection{Proof of Proposition  \ref{Proposition coupling}}

For the proof of Proposition \ref{Proposition coupling}, we without loss of generality assume that the first coordinate of $x$ is an even number, see Remark \ref{remark even} for details. With Proposition \ref{Proposition limsup}  and \ref{Proposition liminf}, by spatial translation it is easy to see there are constants $0<c<C<\infty$ such that for all $x\in [-n/2, n/2]$ 
\beq
\label{bound_x}
\frac{c}{n}<\harm_{D_n}(x)<\frac{C}{n}. 
\eeq
Moreover, recall that 
$$
\begin{aligned}
\harm_{D_n}(x)&=\lim_{R\to\infty} \frac{1}{\left| \partial^{out}B(0,R)\right|}\sum_{y\in \partial^{out}B(0,R)} \harm_{D_n}(y,x)\\
&=\lim_{R\to\infty} \frac{1}{\left| \partial^{out}B(0,R)\right|}E_x\left[\text{number of visits to $\partial^{out}B(0,R)$ in } [0,\tau_{D_n}) \right]. 
\end{aligned}
$$
Thus for any $n$ and $x$, there has to be a $R_0$ such that for all $R\ge R_0$,
$$
\left| \harm_{D_n}(x)- \frac{1}{\left| \partial^{out}B(0,R)\right|}E_x\left[\text{number of visits to $\partial^{out}B(0,R)$ in } [0,\tau_{D_n}) \right] \right|<\frac{\ep}{4n}
$$
and 
$$
\left| \harm_{D_n}(0)- \frac{1}{\left| \partial^{out}B(0,R)\right|}E_0\left[\text{number of visits to $\partial^{out}B(0,R)$ in } [0,\tau_{D_n}) \right] \right|<\frac{\ep}{4n}.
$$
At the same time 
$$
\begin{aligned}
&E_x\left[\text{number of visits to $\partial^{out}B(0,R)$ in } [0,\tau_{D_n}) \right] \\
=&\sum_{z\in \partial^{out}B(0,2n)}\hspace{-0.2 in}\BP_{x} \left(\tau_{2n}<\tau_{D_n}, S_{\tau_{2n}}=z\right)\hspace{-0.2 in}\sum_{w\in \partial^{out}B(0,R)}\hspace{-0.2 in}\frac{\BP_z\left(\tau_{R}<\tau_{D_n}, S_{\tau_{R}}=w\right)}{\BP_w\left(\tau_{D_n}<\tau_R\right)}
\end{aligned}
$$
and 
$$
\begin{aligned}
&E_0\left[\text{number of visits to $\partial^{out}B(0,R)$ in } [0,\tau_{D_n}) \right] \\
=&\sum_{z\in \partial^{out}B(0,2n)}\hspace{-0.2 in}\BP_{0} \left(\tau_{2n}<\tau_{D_n}, S_{\tau_{2n}}=z\right)\hspace{-0.2 in}\sum_{w\in \partial^{out}B(0,R)}\hspace{-0.2 in}\frac{\BP_z\left(\tau_{R}<\tau_{D_n}, S_{\tau_{R}}=w\right)}{\BP_w\left(\tau_{D_n}<\tau_R\right)}.
\end{aligned}
$$
Thus we have 
\beq
\label{coupling 1}
\begin{aligned}
&\left|\harm_{D_n}(x)-\harm_{D_n}(0)\right|\\
&\le \frac{1}{\left| \partial^{out}B(0,R)\right|}\sum_{z\in \partial^{out}B(0,2n)}\hspace{-0.2 in}\left|\BP_{0} \left(\tau_{2n}<\tau_{D_n}, S_{\tau_{2n}}=z\right)-\BP_{x} \left(\tau_{2n}<\tau_{D_n}, S_{\tau_{2n}}=z\right)\right|\\
&\hspace{ 1.4 in} \cdot \left(\sum_{w\in \partial^{out}B(0,R)}\hspace{-0.2 in}\frac{\BP_z\left(\tau_{R}<\tau_{D_n}, S_{\tau_{R}}=w\right)}{\BP_w\left(\tau_{D_n}<\tau_R\right)} \right)+\frac{\ep}{2n}. 
\end{aligned}
\eeq
Again by Lemma 3-4 of \cite{kesten1987hitting} with $r=n$, we have there is a constant $C<\infty$ such that for all $n$, $R\gg n$ and $z\in \partial^{out}B(0,2n)$
\beq
\label{coupling 2}
\begin{aligned}
&\frac{1}{\left| \partial^{out}B(0,R)\right|}\left(\sum_{w\in \partial^{out}B(0,R)}\hspace{-0.2 in}\frac{\BP_z\left(\tau_{R}<\tau_{D_n}, S_{\tau_{R}}=w\right)}{\BP_w\left(\tau_{D_n}<\tau_R\right)} \right)\\
&\le \frac{\BP_z\left(\tau_{R}<\tau_{D_n}\right)}{\left| \partial^{out}B(0,R)\right|\min_{w\in \partial^{out}B(0,R)}\BP_w\left(\tau_{D_n}<\tau_{R} \right)}\le C. 
\end{aligned}
\eeq
Thus by \eqref{coupling 1} and  \eqref{coupling 2}, in order to prove Proposition \ref{coupling}, it suffices to show the following lemma:
\begin{lemma}
\label{lemma coupling 1}
For any $\ep>0$ there is a $\delta>0$ such that for all sufficiently large $n$ and any $x\in [-\delta n, \delta n]\times \{0\}$, we have 
\beq
\label{coupling 3}
\sum_{z\in \partial^{out}B(0,2n)}\hspace{-0.2 in}\left|\BP_{0} \left(\tau_{2n}<\tau_{D_n}, S_{\tau_{2n}}=z\right)-\BP_{x} \left(\tau_{2n}<\tau_{D_n}, S_{\tau_{2n}}=z\right)\right|<\frac{\ep}{n}. 
\eeq
\end{lemma}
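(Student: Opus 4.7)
The plan is a mirror-coupling argument. Assume $x=(2a,0)$ with $x^{(1)}$ even (the odd case is handled via Remark \ref{remark even}), and $|a|\le \delta n/2$. I would construct a pair $(S,S')$ on a joint probability space by letting $S$ be a simple random walk from $0$ and setting
\[S'_t=\begin{cases}(2a-S^{(1)}_t,\,S^{(2)}_t),&t\le\sigma,\\ S_t,&t>\sigma,\end{cases}\qquad \sigma:=\inf\{t\ge 0:S^{(1)}_t=a\}.\]
By the reflection symmetry of SRW, $S'$ is a simple random walk from $x$; for $t\le\sigma$ one has $S^{(2)}\equiv S'^{(2)}$ and $S^{(1)}+S'^{(1)}\equiv 2a$, so the two walks share the vertical coordinate and are horizontally reflected across the line $\{x^{(1)}=a\}$ until they first meet.

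Set $A_S=\{\tau_{2n}(S)<\tau_{D_n}(S)\}$, $A_{S'}$ analogously, and define the coupling-success event
\[G=\{\sigma<\tau_{2n}(S)\wedge \tau_{D_n}(S)\wedge \tau_{2n}(S')\wedge \tau_{D_n}(S')\}.\]
On $G$ the two walks coincide for all $t>\sigma$, so for each $z\in\partial^{\text{out}}B(0,2n)$ the events $\{A_S,\,S_{\tau_{2n}(S)}=z\}$ and $\{A_{S'},\,S'_{\tau_{2n}(S')}=z\}$ are equal. Summing the differences:
\[\sum_z\big|P_0\big(A_S,\,S_{\tau_{2n}}=z\big)-P_x\big(A_{S'},\,S'_{\tau_{2n}}=z\big)\big|\le P(A_S\cap G^c)+P(A_{S'}\cap G^c).\]
Propositions \ref{Proposition limsup}--\ref{Proposition liminf} (after translation) give $P(A_S),P(A_{S'})\le C/n$, so it suffices to show $P(G^c\mid A_S)\le \epsilon/(2C)$, and symmetrically for $A_{S'}$, for all sufficiently small $\delta>0$ and uniformly in large $n$.

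To estimate $P(G^c\mid A_S)$, I would decompose by the exit location of $S$ on $\partial^{\text{out}}B(0,2n)$. If $S$ exits at $z$ with $|z^{(1)}|\ge |a|$, then since $S^{(1)}$ is integer-valued with $S^{(1)}_0=0$ it must have passed through $a$ en route, giving $\sigma<\tau_{2n}(S)$ automatically; the only bad contribution to $A_S\cap G^c$ therefore comes from exits with $|z^{(1)}|<|a|$, i.e.\ exits in a narrow band at the top or bottom of $\partial^{\text{out}}B(0,2n)$. I would bound this contribution by combining (i) a Beurling/Kesten-style harmonic-measure estimate, in the spirit of Lemma \ref{lemma combinatorial}, showing that exits through this narrow band contribute at most an $O(\delta)$ fraction of the total escape probability, with (ii) a 1-d hitting-time estimate showing that on $A_S$ the projection $S^{(1)}$ performs $\Omega(n^2)$ horizontal steps, so its range covers a centered interval of length $\Omega(n)$ containing $a$ with probability tending to $1$.

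The principal obstacle is combining (i) and (ii) into a bound uniform in $n$: the marginal estimate $P(\sigma>T)\le C|a|/\sqrt{T}$ is too weak once one conditions on the rare event $A_S$, whose probability is of order $1/n$. One clean route is to invoke Donsker's invariance principle together with a Doob $h$-transform: under $P_0(\cdot\mid A_S)$, the rescaled process $(S_{\lfloor n^2 t\rfloor}/n)_{t\ge 0}$ converges to the $h$-transform of planar Brownian motion from $0$ conditioned to exit $B(0,2)$ before hitting $[-1,1]\times\{0\}$, and in this limit the rescaled coupling time $\sigma/n^2\to 0$ (since $a/n\to 0$), forcing $P(G^c\mid A_S)\to 0$. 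A more quantitative alternative is to apply Harnack's inequality to the positive discrete harmonic function $y\mapsto P_y(\tau_{2n}<\tau_{D_n},\,S_{\tau_{2n}}=z)$ on the set $[-\delta n,\delta n]\times\{0\}$, yielding a pointwise comparison between its values at $0$ and at $x$.
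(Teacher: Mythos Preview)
Your mirror coupling is an attractive idea, but the argument has a real gap at exactly the point you flag as ``the principal obstacle,'' and the decomposition leading up to it is also not correct as stated.

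First, the claim that ``if $S$ exits at $z$ with $|z^{(1)}|\ge|a|$ then $\sigma<\tau_{2n}(S)$'' is false: taking $a>0$, an exit with $z^{(1)}\le -a$ gives no information about whether $S^{(1)}$ ever reached the level $a$. The correct statement is that $\sigma>\tau_{2n}(S)$ forces the entire trajectory of $S^{(1)}$ to stay below $a$, so the exit occurs in the half-circle $\{z^{(1)}<a\}$, which is essentially half of $\partial^{\mathrm{out}}B(0,2n)$, not a narrow band. You also ignore the contribution to $G^c$ from $S'$ hitting $D_n$ or $\partial^{\mathrm{out}}B(0,2n)$ before time $\sigma$; these are genuine edge effects (near $x^{(1)}\approx \pm n$) that must be accounted for.

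Second, and more seriously, neither of your two routes to $P(G^c\mid A_S)\to 0$ is actually carried out. The invariance-principle route requires convergence of the walk \emph{conditioned on an event of probability $\asymp 1/n$}, starting from a point on the absorbing boundary; this is not Donsker's theorem and needs a separate argument (local limit theorems for $h$-processes, or a careful KMT-type coupling). The Harnack route fails outright because $0$ and $x$ lie in $D_n$, i.e.\ on the boundary where the relevant harmonic function vanishes; interior Harnack does not apply, and boundary Harnack compares two harmonic functions at one point rather than one function at two points. The heuristic ``$S^{(1)}$ makes $\Omega(n^2)$ horizontal steps on $A_S$'' likewise needs proof: the conditioning on $A_S$ biases the walk to move vertically, and you have not shown that the horizontal range is still of order $n$ under this bias.

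The paper avoids this difficulty altogether by a different two-stage coupling. In stage one the two walks move as exact translates (by $x$) of a single walk $\bar S$ until $\bar S$ exits the box $[-\lfloor n/2\rfloor,\lfloor n/2\rfloor]\times[0,\lfloor\ep n\rfloor]$; since the walks are rigid translates, the rare escape event (probability $\asymp 1/(\ep n)$) is paid \emph{once}, jointly. In stage two the walks, now at height $\ep n$ and separated by $|x|\le\delta n$ with $\delta=e^{-1/\ep}\ll\ep$, are merged by a maximal coupling; standard total-variation and heat-kernel bounds show the merge succeeds within $\ep^4 n^2$ steps, long before either walk can reach $D_n$ or $\partial^{\mathrm{out}}B(0,2n)$. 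The point is that the conditioning on escape is absorbed entirely into stage one, so stage two is an \emph{unconditional} coupling estimate at macroscopic distance from the boundary, where the analysis is routine.
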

\begin{proof}
For any $\ep>0$, define $\delta=e^{-\ep^{-1}}>0$. In order to prove this lemma, we construct the following coupling between the simple random walk starting from 0 and $x\in [-\delta n, \delta n]\times \{0\}$:
\begin{enumerate}[(i)]
\item Define subset $A^\ep_n=[-\lfloor n/2\rfloor,\lfloor n/2\rfloor]\times [0,\lfloor \ep n\rfloor]$. 
\item Let $\{\bar S_k\}_{k=0}^\infty$ be a simple random walk starting from 0, $\bar T_n^\ep=\inf\{k: \ \bar S_k\in \partial^{in} A^\ep_n\}$, and $x_{n}^\ep=\bar S_{\bar T_n^\ep}$.  
\item For $k\le \bar T_n^\ep$, let $S_{1,k}=\bar S_k$ and $S_{2,k}=\bar S_k+x$. 
\item Let $\left\{\hat S_{1,k}\right\}_{k=0}^\infty$ and $\left\{\hat S_{2,k}\right\}_{k=0}^\infty$ be two simple random walks starting from $x_{n}^\ep$ and $x_{n}^\ep+x$ and coupled under the maximal coupling.
\item For $k> \bar T_n^\ep$, let $S_{1,k}=\hat S_{1,k- T_n^\ep}$ and $S_{2,k}=\hat S_{2,k- T_n^\ep}$.
\end{enumerate}
\begin{remark}
\label{remark even}
In Step (iv) we use the assumption that the first coordinate of $x$ is an even number. Otherwise, one can construct $\hat S_{1,k}$ starting from $x_{n}^\ep$ and $\hat S_{2,k}$ starting uniformly from $B(x_{n}^\ep+x,1)$ under maximal coupling. 
\end{remark}
According to strong Markov property, it is easy to see that $S_{1,k}$ and $S_{2,k}$ form two simple random walks starting from $0$ and $x$. Let $\tau^{(1)}_{\cdot}$ and  $\tau^{(2)}_{\cdot}$ be the stopping time with respect to $S_{1,k}$ and $S_{2,k}$ respectively. Thus
$$
\begin{aligned}
&\sum_{z\in \partial^{out}B(0,2n)}\hspace{-0.2 in}\left|\BP_{0} \left(\tau_{2n}<\tau_{D_n}, S_{\tau_{2n}}=z\right)-\BP_{x} \left(\tau_{2n}<\tau_{D_n}, S_{\tau_{2n}}=z\right)\right|\\
= &\sum_{z\in \partial^{out}B(0,2n)}\hspace{-0.2 in}\left|\BP_{0} \left(\tau^{(1)}_{2n}<\tau^{(1)}_{D_n}, S_{1,\tau^{(1)}_{2n}}=z\right)-\BP_{x} \left(\tau^{(2)}_{2n}<\tau^{(2)}_{D_n}, S_{2,\tau^{(2)}_{2n}}=z\right)\right|. 
\end{aligned}
$$
Again we introduce 
$$
U^\ep_n=[-\lfloor n/2\rfloor,\lfloor n/2\rfloor]\times \lfloor \ep n\rfloor, \ \  B^\ep_n=[-\lfloor n/2\rfloor,\lfloor n/2\rfloor]\times0
$$
and
$$
L^\ep_n=-\lfloor n/2\rfloor\times [1,\lfloor \ep n\rfloor-1] \ \ R^\ep_n=\lfloor n/2\rfloor\times [1,\lfloor \ep n\rfloor-1]
$$
as the four edges of $\partial^{in} A^\ep_n$. Note that for all $\ep<1/3$
$$
\begin{aligned}
&\left\{\tau^{(1)}_{2n}<\tau^{(1)}_{D_n}\right\}\cap \left\{\bar S_{\bar T_n^\ep}\in B^\ep_n \right\}=\emptyset, \ 
&\left\{\tau^{(2)}_{2n}<\tau^{(2)}_{D_n}\right\}\cap \left\{\bar S_{\bar T_n^\ep}\in B^\ep_n \right\}=\emptyset. 
\end{aligned}
$$
Thus for any $z\in \partial^{out}B(0,2n)$, we have 
$$
\begin{aligned}
\BP_{0} \left(\tau^{(1)}_{2n}<\tau^{(1)}_{D_n}, S_{1,\tau^{(1)}_{2n}}=z\right)=&\BP_{0} \left(\bar S_{\bar T_n^\ep}\in U^\ep_n, \ \tau^{(1)}_{2n}<\tau^{(1)}_{D_n}, \ S_{1,\tau^{(1)}_{2n}}=z\right)\\
+&\BP_{0} \left(\bar S_{\bar T_n^\ep}\in L^\ep_n\cup R^\ep_n, \ \tau^{(1)}_{2n}<\tau^{(1)}_{D_n}, \ S_{1,\tau^{(1)}_{2n}}=z\right)
\end{aligned}
$$
and 
$$
\begin{aligned}
\BP_{x} \left(\tau^{(2)}_{2n}<\tau^{(2)}_{D_n}, S_{2,\tau^{(2)}_{2n}}=z\right)=&\BP_{x} \left(\bar S_{\bar T_n^\ep}\in U^\ep_n, \ \tau^{(2)}_{2n}<\tau^{(2)}_{D_n}, \ S_{2,\tau^{(2)}_{2n}}=z\right)\\
+&\BP_{x} \left(\bar S_{\bar T_n^\ep}\in L^\ep_n\cup R^\ep_n, \ \tau^{(2)}_{2n}<\tau^{(2)}_{D_n}, \ S_{2,\tau^{(2)}_{2n}}=z\right). 
\end{aligned}
$$
Thus we have 
\beq
\label{coupling 4}
\begin{aligned}
&\sum_{z\in \partial^{out}B(0,2n)}\hspace{-0.2 in}\left|\BP_{0} \left(\tau^{(1)}_{2n}<\tau^{(1)}_{D_n}, S_{1,\tau^{(1)}_{2n}}=z\right)-\BP_{x} \left(\tau^{(2)}_{2n}<\tau^{(2)}_{D_n}, S_{2,\tau^{(2)}_{2n}}=z\right)\right|\\
\le &\sum_{z\in \partial^{out}B(0,2n)}\hspace{-0.2 in}\left|\BP\left(\bar S_{\bar T_n^\ep}\in U^\ep_n, \ \tau^{(1)}_{2n}<\tau^{(1)}_{D_n}, \ S_{1,\tau^{(1)}_{2n}}=z\right)- \BP \left(\bar S_{\bar T_n^\ep}\in U^\ep_n, \ \tau^{(2)}_{2n}<\tau^{(2)}_{D_n}, \ S_{2,\tau^{(2)}_{2n}}=z\right)\right|\\
+&\sum_{z\in \partial^{out}B(0,2n)}\hspace{-0.2 in}\BP \left(\bar S_{\bar T_n^\ep}\in L^\ep_n\cup R^\ep_n, \ \tau^{(1)}_{2n}<\tau^{(1)}_{D_n}, \ S_{1,\tau^{(1)}_{2n}}=z\right) \\
+&\sum_{z\in \partial^{out}B(0,2n)}\hspace{-0.2 in}\BP \left(\bar S_{\bar T_n^\ep}\in L^\ep_n\cup R^\ep_n, \ \tau^{(2)}_{2n}<\tau^{(2)}_{D_n}, \ S_{2,\tau^{(2)}_{2n}}=z\right)\\
\le & \sum_{z\in \partial^{out}B(0,2n)}\hspace{-0.2 in}\left|\BP \left(\bar S_{\bar T_n^\ep}\in U^\ep_n, \ \tau^{(1)}_{2n}<\tau^{(1)}_{D_n}, \ S_{1,\tau^{(1)}_{2n}}=z\right)- \BP \left(\bar S_{\bar T_n^\ep}\in U^\ep_n, \ \tau^{(2)}_{2n}<\tau^{(2)}_{D_n}, \ S_{2,\tau^{(2)}_{2n}}=z\right)\right|\\
+ & 2\BP \left(\bar S_{\bar T_n^\ep}\in L^\ep_n\cup R^\ep_n\right). 
\end{aligned}
\eeq
In order to control the right hand side of \eqref{coupling 4}, we first concentrate on controlling its second term. Note that by invariance principle it is easy to check that there is a constant $c>0$ such that for any integer $m>1$ and any integer $j$ with $|j|\le m$, we have
\beq
\label{invariance upper 1}
\BP_{(0,j)}\left(\tau_{\partial^{in}_{l} I_m \cup \partial^{in}_{r} I_m}<\tau_{\partial^{in}_{u} I_m \cup \partial^{in}_{b} I_m}\right)<1-c. 
\eeq
Moreover, by Lemma \ref{lemma combinatorial}, 
\begin{equation}
\label{ded}
P_{(0,0)} (\tau_{\partial^{in}_{l} I_m \cup \partial^{in}_{r} I_m}<\tau_{\partial^{in}_{u} I_m \cup \partial^{in}_{b} I_m} ) \leq P_{(0,0)} (\tau_{L_m} < \tau_{L_0}) = \frac{1}{4\epsilon m}.
\end{equation}
In the rest of the proof we call the event in \eqref{invariance upper 1} a side escaping event. The detailed proof of \eqref{invariance upper 1} follows exactly the same argument as the proof of Equation (11) in \cite{procaccia2017sets}, which can also be illustrated in the following figure: 

\begin{figure}[h]
\label{fig: invariance upper 1}
\centering 
\begin{tikzpicture}[scale=0.5]
\tikzstyle{redcirc}=[circle,
draw=black,fill=red,thin,inner sep=0pt,minimum size=2mm]
\tikzstyle{bluecirc}=[circle,
draw=black,fill=blue,thin,inner sep=0pt,minimum size=2mm]

\draw [black,thin] (5,0) to (0,0);

\draw[black,thin] (5,0) to (10,0);

\draw [black,thin] (0,0) to (0,5);

\draw [black,thin] (10,0) to (10,5);

\draw [black,thin] (0,5) to (10,5);

\draw [black,dashed] (5,0) to (5,5);

\draw [red,dashed] (0,5) to (0,8) ;

\draw [red,dashed] (10,5) to (10,8) ;

\draw [red,dashed] (0,0) to (0,-2) ;

\draw [red,dashed] (10,0) to (10,-2) ;

\draw [red,dashed] (0,-2) to (10,-2) ;

\draw [red,dashed] (0,8) to (10,8) ;

\draw [red,dashed] (5,5) to (5,8) ;

\draw [red,dashed] (5,0) to (5,-2) ;

\node (v2) at (5,3) [bluecirc] {};
\node (v3) at (6,3) {(0,j)};

\node (v4) at (11,0) {(m,0)};

\node (v5) at (-1.1,0) {(-m,0)};

\node (v6) at (11.1,5) {(m,m)};

\node (v7) at (-1.2,5) {(-m,m)};

\node (v4) at (11.4,-2) {(m,j-m)};

\node (v5) at (-1.6,-2) {(-m,j-m)};

\node (v6) at (11.6,8) {(m,j+m)};

\node (v7) at (-1.7,8) {(-m,j+m)};

\draw [blue,thick] plot [smooth, tension=2.5] coordinates {(5,3) (4, 2) (6,1) (5.5, 4) (6, 2) (6.5, 3) (7.5, 4) (8.5, 3) (10, 2.5)};

\node (v2) at (10,2.5) [bluecirc] {};

\end{tikzpicture}
\caption{invariance principle for \eqref{invariance upper 1}}
\end{figure}
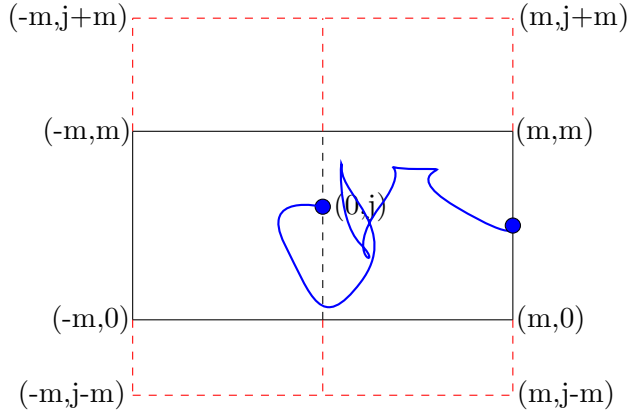

Moreover, define $m(\ep,n)=\lfloor \ep n\rfloor$. Note that in the event $\{\bar S_{\bar T_n^\ep}\in L^\ep_n\cup R^\ep_n\}$, our simple random walk has to first escape $A_{m(\ep,n)}$ through $L_{m(\ep,n)}\cup R_{m(\ep,n)}$ and then has at least $K(\ep,n)=\big\lfloor\lfloor n/2\rfloor/m(\ep,n)\big\rfloor$ independent times of side escaping events. Thus by Lemma \ref{lemma combinatorial}, \eqref{invariance upper 1}, (\ref{ded}), and the fact that  for all sufficiently small $\ep>0$, 
$$
K(\ep,n)=\big\lfloor\lfloor n/2\rfloor/m(\ep,n)\big\rfloor\ge \frac{1}{3\ep}
$$
we have 
\beq
\label{coupling 5}
\BP \left(\bar S_{\bar T_n^\ep}\in L^\ep_n\cup R^\ep_n\right)\le \frac{1}{4 \ep n} (1-c)^{\frac{1}{3\ep} -1}\ll \frac{\ep}{n}
\eeq
for all sufficiently small $\ep>0$. Thus in order to prove Lemma \ref{lemma coupling 1}, it suffices to show that 

\beq
\label{coupling 6}
\begin{aligned}
& \sum_{z\in \partial^{out}B(0,2n)}\hspace{-0.2 in}\left| P \left(\bar S_{\bar T_n^\ep}\in U^\ep_n, \ \tau^{(1)}_{2n}<\tau^{(1)}_{D_n}, \ S_{1,\tau^{(1)}_{2n}}=z\right)- P \left(\bar S_{\bar T_n^\ep}\in U^\ep_n, \ \tau^{(2)}_{2n}<\tau^{(2)}_{D_n}, \ S_{2,\tau^{(2)}_{2n}}=z\right)\right|\\
 & \ll \frac{\ep}{n}. 
 \end{aligned}
\eeq
Recall that in our construction, $\left\{\hat S_{1,k}\right\}_{k=0}^\infty$ and $\left\{\hat S_{2,k}\right\}_{k=0}^\infty$ are simple random walks coupled under the maximal coupling. Define events: 
$$
\mathcal{A}_1=\left\{\hat S_{1,k}\notin D_n\cup \partial^{out}B(0,2n), \ \forall k\le \ep^4 n^2 \right\}, 
$$
$$
\mathcal{A}_2=\left\{\hat S_{2,k}\notin D_n\cup \partial^{out}B(0,2n), \ \forall k\le \ep^4 n^2 \right\},
$$
and
$$
\mathcal{A}_3=\left\{\text{there exists a } k\le \ep^4 n^2 \text{ such that }  \hat S_{1,j}=\hat S_{1,j}, \forall j\ge k\right\}.
$$
By definition, one can easily see that 
\beq
\label{coupling 7}
\begin{aligned}
&\left\{\bar S_{\bar T_n^\ep}\in U^\ep_n, \ \tau^{(1)}_{2n}<\tau^{(1)}_{D_n}, \ S_{1,\tau^{(1)}_{2n}}=z\right\}\cap \mathcal{A}_1\cap \mathcal{A}_2\cap \mathcal{A}_3\\
=&\left\{\bar S_{\bar T_n^\ep}\in U^\ep_n, \ \tau^{(2)}_{2n}<\tau^{(2)}_{D_n}, \ S_{2,\tau^{(2)}_{2n}}=z\right\}\cap \mathcal{A}_1\cap \mathcal{A}_2\cap \mathcal{A}_3
\end{aligned}
\eeq
which implies that 
\beq
\label{coupling 8}
\begin{aligned}
& \sum_{z\in \partial^{out}B(0,2n)}\hspace{-0.2 in}\left|\BP\left(\bar S_{\bar T_n^\ep}\in U^\ep_n, \ \tau^{(1)}_{2n}<\tau^{(1)}_{D_n}, \ S_{1,\tau^{(1)}_{2n}}=z\right)- \BP \left(\bar S_{\bar T_n^\ep}\in U^\ep_n, \ \tau^{(2)}_{2n}<\tau^{(2)}_{D_n}, \ S_{2,\tau^{(2)}_{2n}}=z\right)\right|\\
 & \le 2\BP\left(\{\bar S_{\bar T_n^\ep}\in U^\ep_n\}\cap \mathcal{A}_1^c \right)+2\BP\left(\{\bar S_{\bar T_n^\ep}\in U^\ep_n\}\cap \mathcal{A}_2^c \right)+2\BP\left(\{\bar S_{\bar T_n^\ep}\in U^\ep_n\}\cap \mathcal{A}_3^c \right). 
 \end{aligned}
\eeq
Thus, it suffices to control the probabilities on the right hand side of \eqref{coupling 8}. For its first term, we have by Proposition 2.1.2 of \cite{lawler2010random} there are constants $c, \beta\in (0,\infty)$, independent to $n$ such that 
$$
\BP\left( \mathcal{A}_1^c \right)\le c e^{-\beta/\ep^2}, \ \ \BP\left( \mathcal{A}_2^c \right)\le c e^{-\beta/\ep^2}.
$$
And by strong Markov property we have
\beq
\label{coupling 9}
\BP\left(\{\bar S_{\bar T_n^\ep}\in U^\ep_n\}\cap \mathcal{A}_1^c \right)\le \frac{c e^{-\beta/\ep^2}}{\ep} n^{-1}\ll \frac{\ep}{n}
\eeq
and 
\beq
\label{coupling 10}
\BP\left(\{\bar S_{\bar T_n^\ep}\in U^\ep_n\}\cap \mathcal{A}_2^c \right)\le \frac{c e^{-\beta/\ep^2}}{\ep} n^{-1}\ll \frac{\ep}{n} 
\eeq
for all sufficiently small $\ep>0$. Finally, for the last term 
$$
\BP\left(\{\bar S_{\bar T_n^\ep}\in U^\ep_n\}\cap \mathcal{A}_3^c \right)
$$
recall that the first coordinate of $x$ is even and that $\left\{\hat S_{1,k}\right\}_{k=0}^\infty$ and $\left\{\hat S_{2,k}\right\}_{k=0}^\infty$ be two simple random walks starting from $x_{n}^\ep$ and $x_{n}^\ep+x$ and coupled under the maximal coupling. We have that 
$$
\BP\left( \mathcal{A}_3^c \right)\le d_{TV}\left(\hat S_{1,\lfloor \ep^4 n^2 \rfloor}, \hat S_{2,\lfloor \ep^4 n^2 \rfloor}\right)
$$
where $d_{TV}(\cdot,\cdot)$ stands for the total variation distance between the distributions of two random variables. On the other hand, note that 
$$
\begin{aligned}
d_{TV}\left(\hat S_{1,\lfloor \ep^4 n^2 \rfloor}, \hat S_{2,\lfloor \ep^4 n^2 \rfloor}\right)&= \frac{1}{2}\sum_{z\in \ZZ^2}\left|\BP\left(\hat S_{1,\lfloor \ep^4 n^2 \rfloor}=z\right)-\BP\left(\hat S_{2,\lfloor \ep^4 n^2 \rfloor}=z\right) \right|\\
&\le \frac{1}{2} \Bigg[ \BP\left(\hat S_{1,\lfloor \ep^4 n^2 \rfloor}\in B^c(0,2n)\right)+\BP\left(\hat S_{2,\lfloor \ep^4 n^2 \rfloor}\in B^c(0,2n)\right)\\
&+\sum_{z\in B(0,2n)}\left|\BP\left(\hat S_{1,\lfloor \ep^4 n^2 \rfloor}=z\right)-\BP\left(\hat S_{2,\lfloor \ep^4 n^2 \rfloor}=z\right) \right| \Bigg] . 
\end{aligned}
$$
And again by Proposition 2.1.2 of \cite{lawler2010random} there are constants $c, \beta\in (0,\infty)$, independent to $n$ such that
\beq
\label{total variation 1}
\BP\left(\hat S_{1,\lfloor \ep^4 n^2 \rfloor}\in B^c(0,2n)\right)\le  c e^{-\beta/\ep^4}, \ \BP\left(\hat S_{2,\lfloor \ep^4 n^2 \rfloor}\in B^c(0,2n)\right)\le c e^{-\beta/\ep^4}. 
\eeq
And for any $z\in B(0,2n)$, condition on  $\bar S_{\bar T_n^\ep}=x_{n}^\ep$, applying Proposition 4.1 of \cite{delmotte1999parabolic} with $x_0=x_{n}^\ep$, $n_0=\lfloor \ep^4 n^2 \rfloor$ and $R =\lfloor \ep^4 n\rfloor$, there are constant $h>0$ and $C<\infty$ independent to $n$ and the choice of $x_{n}^\ep$, 
$$
\begin{aligned}
&\left|\BP\left(\hat S_{1,\lfloor \ep^4 n^2 \rfloor}=z\Big|\bar S_{\bar T_n^\ep}=x_{n}^\ep\right)-\BP\left(\hat S_{2,\lfloor \ep^4 n^2 \rfloor}=z\Big| \bar S_{\bar T_n^\ep}=x_{n}^\ep\right) \right|\\
&\le C \left(\frac{e^{-\frac{1}{\ep}}}{\ep^4}\right)^h \sup_{(n,y)\in Q}\BP_y\left(S_n=z \right),
\end{aligned}
$$
where $Q=[n_0-2R^2, n_0]\times B(x_{n}^\ep, 2R)$. Moreover, by Local Central Limit Theorem, see Theorem 2.1.1 of \cite{lawler2010random} for example, there is a finite constant $C<\infty$ independent to $n$ such that 
$$
\sup_{(n,y)\in Q}\BP_y\left(S_n=z \right)\le \frac{C}{\ep^4 n^2},
$$
which implies that 
$$
\left(\frac{e^{-\frac{1}{\ep}}}{\ep^4}\right)^h \sup_{(n,y)\in Q}\BP_y\left(S_n=z \right)\le C e^{-\frac{h}{\ep}} \ep^{-4(1+h)} n^{-2}
$$
and that 
\beq
\label{total variation 2}
\begin{aligned}
&\left|\BP\left(\hat S_{1,\lfloor \ep^4 n^2 \rfloor}=z\right)-\BP\left(\hat S_{2,\lfloor \ep^4 n^2 \rfloor}=z\right) \right|\\
&\le \sum_{x_{n}^\ep} \left|\BP\left(\hat S_{1,\lfloor \ep^4 n^2 \rfloor}=z\Big|\bar S_{\bar T_n^\ep}=x_{n}^\ep\right)-\BP\left(\hat S_{2,\lfloor \ep^4 n^2 \rfloor}=z\Big| \bar S_{\bar T_n^\ep}=x_{n}^\ep\right) \right|\BP\left(\bar S_{\bar T_n^\ep}=x_{n}^\ep\right)\\
&\le C e^{-\frac{h}{\ep}} \ep^{-4(1+h)} n^{-2} \sum_{x_{n}^\ep} \BP\left(\bar S_{\bar T_n^\ep}=x_{n}^\ep\right) \\
&\le C e^{-\frac{h}{\ep}} \ep^{-4(1+h)} n^{-2}.
\end{aligned}
\eeq
Thus,
\beq
\label{eeeeeee}
\begin{aligned}
& \sum_{z\in B(0,2n)} \left|\BP\left(\hat S_{1,\lfloor \ep^4 n^2 \rfloor}=z\right)-\BP\left(\hat S_{2,\lfloor \ep^4 n^2 \rfloor}=z\right) \right|\\
&\le \sum_{z\in B(0,2n)} C e^{-\frac{h}{\ep}} \ep^{-4(1+h)} n^{-2} \\
&\le C e^{-\frac{h}{\ep}} \ep^{-4(1+h)}.
\end{aligned}
\eeq
Combining \eqref{total variation 1} and \eqref{eeeeeee} we have 
\beq
\label{total variation 3}
\BP\left( \mathcal{A}_3^c \right)\le d_{TV}\left(\hat S_{1,\lfloor \ep^4 n^2 \rfloor}, \hat S_{2,\lfloor \ep^4 n^2 \rfloor}\right)\le \frac{1}{2} \big( 2c e^{-\beta/\ep^4}+C e^{-\frac{h}{\ep}} \ep^{-4(1+h)} \big) .
\eeq
And by strong Markov property, 
\beq
\label{total variation 4}
\BP\left(\{\bar S_{\bar T_n^\ep}\in U^\ep_n\}\cap \mathcal{A}_3^c \right)\le \frac{1}{8 \ep n}\left( 2c e^{-\beta/\ep^4}+C e^{-\frac{h}{\ep}} \ep^{-4(1+h)} \right)\ll \frac{\ep}{n}
\eeq
for all sufficiently large $n$ and sufficiently small $\ep$. Thus the proof of this lemma is complete. 
\end{proof}

With Lemma \ref{lemma coupling 1}, the proof of Proposition \ref{Proposition coupling} is complete. \qed

\subsection{Proof of Lemma \ref{brownian}}
Let $M , M_0 \in \mathbb{Z}_{+}$ such that $M > M_0 >1$. By Strong Markov Property,
\begin{equation}
\begin{aligned}
& P_{(0,Mn)} (\tau_{[-\delta n, \delta n] \times \{0 \} } = \tau_{D_n } ) \\
& = \sum_{y \in \partial^{out} B(0, M_0 n)} P_{(0,Mn)} (\tau_{\partial^{out} B(0, M_0 n) } =y) P_y (\tau_{[-\delta n, \delta n] \times \{0 \} } = \tau_{D_n } ).
\end{aligned}
\end{equation}
So by law of total probability,
\begin{equation}
\begin{aligned}
& \min_{y \in \partial^{out} B(0, M_0 n)} P_y (\tau_{[-\delta n, \delta n] \times \{0 \} } = \tau_{D_n} ) \\
& \leq P_{(0,Mn)} (\tau_{[-\delta n, \delta n] \times \{0 \} } = \tau_{D_n } ) \\
& \leq \max_{y \in \partial^{out} B(0, M_0 n)} P_y (\tau_{[-\delta n, \delta n] \times \{0 \} } = \tau_{D_n } ).
\end{aligned}
\end{equation}
Notice that if we fix $n$,
$$
\lim_{M \rightarrow \infty} P_{(0,Mn)} (\tau_{[-\delta n, \delta n] \times \{0 \} } = \tau_{D_n } ) = \harm_{D_n} ([-\delta n, \delta n] \times \{0 \}),
$$
and thus
\begin{equation}
\begin{aligned}
& \min_{y \in \partial^{out} B(0, M_0 n)} P_y (\tau_{[-\delta n, \delta n] \times \{0 \} } = \tau_{D_n} ) \\
& \leq \harm_{D_n} ([-\delta n, \delta n] \times \{0 \}) \\
& \leq \max_{y \in \partial^{out} B(0, M_0 n)} P_y (\tau_{[-\delta n, \delta n] \times \{0 \} } = \tau_{D_n } ).
\end{aligned}
\end{equation}
Let $\{y_n : y_n \in \partial^{out} B(0, M_0 n) \}$ be a sequence of points in $\mathbb{Z}^2$. Note that $||y_n||_2 \rightarrow \infty$ as $n \rightarrow \infty$. By invariance principle,
$$
\limsup_{n \rightarrow \infty} P_{y_n} (\tau_{[-\delta n, \delta n] \times \{0 \} } = \tau_{D_n } ) \leq \sup_{z \in \partial \bar{B} (0, M_0)} P^{BM}_z (\tau_{[-\delta, \delta] \times \{0\} } = \tau_{[-1, 1] \times \{0\} }),
$$
where $P^{BM}_z$ is the law of a Brownian motion starting at the point $z \in \mathbb{R}^2$. Since the choice of $\{y_n\}$ is arbitrary,  
$$
\limsup_{n \rightarrow \infty} \max_{y \in \partial^{out} B(0, M_0 n)} P_y (\tau_{[-\delta n, \delta n] \times \{0 \} } = \tau_{D_n } ) \leq \sup_{z \in \partial \bar{B} (0, M_0)} P^{BM}_z (\tau_{[-\delta, \delta] \times \{0\} } = \tau_{[-1, 1] \times \{0\} }).
$$
Similarly,
$$
\liminf_{n \rightarrow \infty} \min_{y \in \partial^{out} B(0, M_0 n)} P_y (\tau_{[-\delta n, \delta n] \times \{0 \} } = \tau_{D_n } ) \geq \inf_{z \in \partial \bar{B} (0, M_0)} P^{BM}_z (\tau_{[-\delta, \delta] \times \{0\} } = \tau_{[-1, 1] \times \{0\} }).
$$
Note that
\begin{equation}
\begin{aligned}
& \lim_{M_0 \rightarrow \infty} \sup_{z \in \partial \bar{B} (0, M_0)} P^{BM}_z (\tau_{[-\delta, \delta] \times \{0\} }  = \tau_{[-1, 1] \times \{0\} }) \\
& = \lim_{M_0 \rightarrow \infty} \inf_{z \in \partial \bar{B} (0, M_0)} P^{BM}_z (\tau_{[-\delta, \delta] \times \{0\} }= \tau_{[-1, 1] \times \{0\} }) \\
& = \bharm_{[-1, 1] \times \{0\}} ([-\delta, \delta] \times \{0\}).
\end{aligned}
\end{equation}
Therefore,
$$
\lim_{n \rightarrow \infty} \harm_{D_n} ([-\delta n, \delta n] \times \{0 \}) = \bharm_{[-1, 1] \times \{0\}} ([-\delta, \delta] \times \{0\}).
$$
\qed

With Lemma \ref{brownian}, the proof of Theorem \ref{theorem harmonic limit} is complete. \qed

\subsection{Proof of Proposition \ref{lemma away}}

In order to prove 
$$
\lim_{n\to\infty} n \cdot \max_{y\in l^c_n} \BP_y\left(S_{\bar \tau_{A_n}}=x\right)=0
$$
we first recall that 
$$
l_n=\left[- \lfloor n^{\alpha_2}\rfloor, \lfloor n^{\alpha_2}\rfloor\right]\times\left\{ \lfloor n^{\alpha_1}\rfloor\right\},
$$ 
$\alpha_1=(1+\alpha)/2$, $\alpha_2=(7+\alpha)/8$, and that $$l_n^c=\partial^{in}_lBox(n)\cup\partial^{in}_rBox(n)\cup\partial^{in}_uBox(n)\setminus l_n.
$$
Thus for any point $y\in l^c_n$, define 
$$
T_y=\{\lfloor y^{(1)} /2\rfloor\}\times [0,\infty)
$$
to be the vertical line located in the exact midway between $0$ and $y$. Noting that $\tau_{T_y}<\tau_{x}$, by strong Markov property we have 
\beq
\label{away 2}
\begin{aligned}
\BP_y\left(S_{\bar \tau_{A_n}}=x\right)&=\sum_{z\in T_y} \BP_y\left(\tau_{T_y}<\bar \tau_{A_n}, \ S_{\tau_{T_y}}=z\right)\BP_z \left(S_{\bar \tau_{A_n}}=x \right)\\
&=\hspace{-0.25 in}  \sum_{z\in T_y, \ z^{(2)}\ge n^4}\hspace{-0.25 in}  \BP_y\left(\tau_{T_y}<\bar \tau_{A_n}, \ S_{\tau_{T_y}}=z\right)\BP_z \left(S_{\bar \tau_{A_n}}=x \right)\\
&+\hspace{-0.225 in}  \sum_{z\in T_y, \ z^{(2)}< n^4}\hspace{-0.25 in}  \BP_y\left(\tau_{T_y}<\bar \tau_{A_n}, \ S_{\tau_{T_y}}=z\right)\BP_z \left(S_{\bar \tau_{A_n}}=x \right)\\
&\le \BP_y\left(\tau_{T_y}<\bar \tau_{A_n}, \ S^{(2)}_{T_y} \ge n^4 \right)\\
&+\hspace{-0.2 in}\max_{z\in T_y, \ z^{(2)}< n^4}\hspace{-0.2 in} \BP_z \left(S_{\bar \tau_{A_n}}=x \right) \BP_y\left(\tau_{T_y}<\bar \tau_{A_n}\right). 
\end{aligned}
\eeq
To control the right hand side of \eqref{away 2}, we first define 
$$
\bar D_n= \Big\{ T_y \cup [\lfloor y/2\rfloor,\infty)\times \{0\}  \Big\} \cap B(y,n^4)
$$
and then note that 
$$
\BP_y\left(\tau_{T_y}<\bar \tau_{A_n}, \ S^{(2)}_{T_y}\ge n^4 \right)\le \BP_y\left(\tau_{\partial^{out}B(y,n^4)}<\tau_{\bar D_n}\right). 
$$
Moreover, it is easy to see that 
$$
\text{rad}(\bar D_n)\ge n^4/2
$$
for $n$ sufficiently large, and that 
$$
d(\bar D_n, y) \leq \lfloor n^{\alpha_1}\rfloor.
$$
We apply Theorem $1$ in \cite{lawler2004beurling} with $\kappa =1$ and $A = \bar{D}_n$ on the discrete ball $B(y, n^4)$, then there exists a constant $C >0$ such that 
\beq
\label{away 3}
\BP_y\left(\tau_{\partial^{out}B(y,n^4)}<\tau_{\bar D_n}\right) \le \BP_y\left(\tau_{\partial^{out}B(y,n^4)}<\tau_{\bar D_{n_{[n^{\alpha_1}, n^4/2]}}}\right) \le C\sqrt{\frac{n^{\alpha_1}}{n^4}}=o\left(\frac{1}{n}\right). 
\eeq
Note that this is a Beurling estimate for random walk. And for the second term in the right hand side of \eqref{away 2}, note that for 
$$
\tilde D_n=L_0 \cap B\left(y,n^{\alpha_2}/2\right)
$$
we have 
\beq
\label{away 4}
\left\{\tau_{T_y}<\bar \tau_{A_n} \right\}\subset \left\{\tau_{\partial^{out} B\left(y,n^{\alpha_2}/2\right)}<\bar \tau_{\tilde D_n} \right\}
\eeq
Using again the Theorem $1$ of \cite{lawler2004beurling} to the right hand side of \eqref{away 4} we have 
\beq
\label{away 5}
\begin{aligned}
\BP_y\left(\tau_{T_y}<\bar \tau_{A_n}\right)&\le \BP_y \left(\tau_{\partial^{out} B\left(y,n^{\alpha_2}/2\right)}<\bar \tau_{\tilde D_n} \right)\le C n^{-(\alpha_2-\alpha_1)/2}. 
\end{aligned}
\eeq
At the same time, for any $z\in T_y$ such that $z^{(2)}< n^4$, again by the reversibility of simple random walk we have 
\beq
\label{away 6}
\begin{aligned}
\BP_z \left(S_{\bar \tau_{A_n}}=x \right)&=\sum_{n=1}^\infty \BP_z \left(S_1,S_2,\cdots, S_{n-1}\notin A_n, \ S_n=x \right)\\
&=\sum_{n=1}^\infty \BP_x \left(S_1,S_2,\cdots, S_{n-1}\notin A_n, \ S_n=z \right)\\
&=E_x\left[\# \text{ of visits to }z \text{ in } [0,\tau_{A_n})\right]\\
&=\BP_x\left(\tau_{z}<\tau_{A_n} \right)E_z\left[\# \text{ of visits to }z \text{ in } [0,\tau_{A_n})\right]\\
&=\frac{\BP_x\left(\tau_{z}<\tau_{A_n} \right)}{\BP_z\left(\tau_{A_n}<\tau_{z} \right)}. 
\end{aligned}
\eeq
To control the right hand side of \eqref{away 6}, we first refer to the well known result:
\begin{lemma}(Lemma 1 of  \cite{kesten1987hitting})
\label{Lemma 1 1987}
The series 
\beq
\label{discrete Green}
a(x)=\sum_{n=0}^\infty [P_0(S_n=0)-P_0(S_n=x)]
\eeq
converge for each $x\in \ZZ^2$, and the function $a(\cdot)$ has the following properties:

\beq
\label{property_a_1}
a(x)\ge 0, \ \forall x\in \ZZ^2, \ a(0)=0,
\eeq 

\beq
\label{property_a_2}
a\big((\pm 1,0)\big)=a\big((0,\pm 1)\big)=1
\eeq 

\beq
\label{property_a_3}
E_x[a(S_1)]-a(x)=\delta(x,0),
\eeq 
so $a(S_{n\wedge \tau_v}-v)$ is a nonnegative martingale, where $\tau_v=\tau_{\{v\}}$, for any $v\in \ZZ^2$. And there is some suitable $c_0$ such that
\beq
\label{property_a_4}
\left|a(x)-\frac{1}{2\pi} \log\|x\|-c_0 \right|=O(\|x\|^{-2}),
\eeq
as $\|x\|\to\infty$. 

\end{lemma}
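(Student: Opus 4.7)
The plan is to establish the four listed properties of the potential kernel $a(\cdot)$ by combining a telescoping identity with the two-dimensional local central limit theorem (LCLT). The argument is classical (essentially Spitzer's treatment) and I would not reproduce it in detail in the paper, but the key steps are as follows.

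First, for convergence of the series in \eqref{discrete Green}, I would invoke the sharp LCLT for planar simple random walk in the form
$$
P_0(S_n=y) = \frac{2}{\pi n}\, e^{-\|y\|^2/n}\,\mathbf{1}\bigl[n+y^{(1)}+y^{(2)}\text{ even}\bigr] + O(n^{-2})
$$
uniformly in $y$. For fixed $x$ and large $n$, the leading contribution to $P_0(S_n=0) - P_0(S_n=x)$ is $\frac{2}{\pi n}\bigl(1-e^{-\|x\|^2/n}\bigr) = O(\|x\|^2/n^2)$, up to a $1/n$ parity oscillation that cancels when consecutive terms are grouped; the $O(n^{-2})$ remainders then yield absolute convergence. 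Property \eqref{property_a_1} follows at once: $a(0)=0$ is by definition, and nonnegativity comes from the Fourier inequality $P_0(S_n=0) \ge P_0(S_n=y)$, itself immediate from $P_0(S_n=y)=(2\pi)^{-2}\int_{[-\pi,\pi]^2} \phi(\theta)^n \cos(y\cdot\theta)\,d\theta$ with $\phi(\theta)=\tfrac12(\cos\theta_1+\cos\theta_2)$.

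The harmonicity identity \eqref{property_a_3} is the heart of the lemma. Substituting the series into $E_x[a(S_1)]-a(x)$ and using the one-step Markov identity $\sum_y \tfrac14 \mathbf{1}[\|y-x\|=1] P_0(S_n=y) = P_0(S_{n+1}=x)$ collapses the expression to the telescoping sum
$$
\sum_{n\ge 0}\bigl[P_0(S_n=x) - P_0(S_{n+1}=x)\bigr] = P_0(S_0=x) = \delta(x,0);
$$
optional stopping then gives the martingale property for $a(S_{n\wedge\tau_v}-v)$. For the neighbor values \eqref{property_a_2}, apply \eqref{property_a_3} at $x=0$ to obtain $E_0[a(S_1)]=1$, and then use the four-fold symmetry of the walk to conclude that each of $a(\pm e_1), a(\pm e_2)$ equals $1$. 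For the asymptotic \eqref{property_a_4}, I would pass to the Fourier representation $a(x) = (2\pi)^{-2}\int_{[-\pi,\pi]^2} \frac{1-\cos(x\cdot\theta)}{1-\phi(\theta)}\,d\theta$. Near $\theta=0$ one has $1-\phi(\theta)\sim\|\theta\|^2/4$, and the change of variables $\theta = u/\|x\|$ on a small disk extracts the logarithm $\frac{1}{2\pi}\log\|x\|$; the complementary region contributes the constant $c_0$, while the next Taylor coefficient of $1-\phi$ near the origin produces the $O(\|x\|^{-2})$ error.

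The main obstacle is the parity issue at the LCLT step: the naive leading term produces $1/n$ contributions that are not summably small, and they only cancel in pairs, so extracting absolute convergence requires either the improved LCLT with the explicit parity indicator or a separate argument pairing successive terms. Once convergence is in hand, the harmonicity identity is a clean telescope, the neighbor values follow by symmetry, and \eqref{property_a_4} is a standard (though careful) Fourier asymptotic computation.
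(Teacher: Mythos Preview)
The paper does not supply a proof of this lemma: it is quoted as Lemma~1 of \cite{kesten1987hitting} and used as a black box. Your outline is the classical Spitzer argument and is essentially correct. One small caveat: the termwise inequality $P_0(S_n=0)\ge P_0(S_n=y)$ you invoke for \eqref{property_a_1} fails for odd $n$ when $y^{(1)}+y^{(2)}$ is odd, since then $\phi(\theta)^n$ changes sign on $[-\pi,\pi]^2$ and in fact $P_0(S_n=0)=0<P_0(S_n=y)$; nonnegativity of $a(\cdot)$ therefore does not follow term by term. One instead uses, for instance, the representation $a(x)=\lim_{N\to\infty}\bigl[G_{B(0,N)}(0,0)-G_{B(0,N)}(0,x)\bigr]$ together with the maximum principle, or argues a~posteriori from \eqref{property_a_3} and \eqref{property_a_4} via the discrete minimum principle. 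The remaining steps --- the telescoping proof of \eqref{property_a_3}, the symmetry deduction of \eqref{property_a_2}, and the Fourier asymptotic for \eqref{property_a_4} --- are all standard and correctly sketched.
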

Now we prove the following lower bound on the denominator:
\begin{lemma}
\label{lemma low denominator}
There is a finite constant $C<\infty$ such that for any nonzero $x\in \ZZ^2$, 
$$
\BP_0(\tau_x<\tau_0)\ge \frac{C}{(\log\|x\|)^2}. 
$$
\end{lemma}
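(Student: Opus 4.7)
The plan is to compute $\mathbb{P}_0(\tau_x<\tau_0)$ in closed form via the potential kernel $a(\cdot)$ of Lemma \ref{Lemma 1 1987}, targeting the classical Spitzer identity
$$\mathbb{P}_0(\tau_x<\tau_0)=\frac{1}{2a(x)}.$$
Combined with the logarithmic bound $a(x)\le C(1+\log\|x\|)$ coming from \eqref{property_a_4}, this yields $\mathbb{P}_0(\tau_x<\tau_0)\ge c/\log\|x\|$ for $\|x\|\ge 2$, which is strictly stronger than the claimed $C/(\log\|x\|)^2$; the stated weaker bound is presumably chosen to match later applications. The case $\|x\|=1$ is trivial since $\mathbb{P}_0(\tau_x<\tau_0)\ge 1/4$.

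To derive the Spitzer identity, I would introduce the candidate
$$h(v):=\frac{a(v)+a(x)-a(v-x)}{2a(x)}, \qquad v\in\mathbb{Z}^2.$$
By \eqref{property_a_3}, $a(\cdot)$ is discrete harmonic off $0$ and $a(\cdot-x)$ is discrete harmonic off $x$, hence $h$ is harmonic on $\mathbb{Z}^2\setminus\{0,x\}$. The symmetry $a(-x)=a(x)$ and $a(0)=0$ give $h(0)=0$ and $h(x)=1$, while \eqref{property_a_4} shows $a(v)-a(v-x)=O(1)$ as $\|v\|\to\infty$, so $h$ is bounded. Since two-dimensional simple random walk is recurrent, $\tau_0\wedge\tau_x<\infty$ almost surely, and bounded optional stopping at $\tau_0\wedge\tau_x$ identifies $h(v)=\mathbb{P}_v(\tau_x<\tau_0)$ for every $v\notin\{0,x\}$.

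The formula then follows from a one-step decomposition at the origin: $\mathbb{P}_0(\tau_x<\tau_0)=\frac{1}{4}\sum_{e\in N(0)}h(e)$ where $N(0)$ denotes the four nearest neighbors of $0$, with the convention $h(x)=1$ covering the case that some $e\in N(0)$ coincides with $x$. Substituting the definition of $h$, using $a(e)=1$ for $e\in N(0)$ from \eqref{property_a_2}, and applying \eqref{property_a_3} at the nonzero vertex $x$ in the form $\frac{1}{4}\sum_{e\in N(0)}a(x-e)=a(x)$, the four terms collapse to $\frac{1}{2a(x)}$. The conclusion of the lemma then follows from the upper bound on $a(x)$ supplied by \eqref{property_a_4}.

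The only delicate step is the uniqueness argument identifying the bounded harmonic $h$ with the hitting probability. This rests on the boundedness of $h$, which in turn depends on the cancellation of the divergent $\tfrac{1}{2\pi}\log\|v\|$ contributions of $a(v)$ and $a(v-x)$ at infinity, guaranteed by \eqref{property_a_4}. Every other step is a direct computation.
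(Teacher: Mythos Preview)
Your argument is correct and in fact proves a sharper inequality than the lemma states: via the Spitzer identity $P_0(\tau_x<\tau_0)=1/(2a(x))$ together with \eqref{property_a_4} you obtain a lower bound of order $c/\log\|x\|$ rather than $c/(\log\|x\|)^2$. The derivation you sketch is standard and your verification of the one-step collapse to $1/(2a(x))$ is accurate; the only point worth stressing is that the optional stopping step requires boundedness of $h$, which you correctly deduce from the cancellation of the logarithmic terms in \eqref{property_a_4}.

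The paper's proof takes a genuinely different, more hands-on route. Rather than invoking an exact formula, it runs a two-stage escape argument: first it uses the martingale $a(S_{n\wedge\tau_0})$ to show $P_0(\tau_{\|x\|/2}<\tau_0)\ge \pi/\log\|x\|$, and then, conditioning on the exit point $y\in\partial^{out}B(0,\|x\|/2)$, it shows $P_y(\tau_x<\tau_0)\ge c/\log\|x\|$ by passing through $\partial^{out}B(x,\|x\|/3)$ and repeating the potential-kernel computation around $x$, with an invariance-principle input for the crossing probability. Multiplying the two stages gives the $(\log\|x\|)^{-2}$ bound. Your approach is shorter and sharper; the paper's approach is more modular and avoids the uniqueness/boundedness argument for the harmonic function $h$, at the cost of a suboptimal power of the logarithm. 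Either bound suffices for the applications in Section~\ref{s2}, where only a polylogarithmic lower bound on $P_z(\tau_{A_n}<\tau_z)$ is needed.
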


\begin{proof}
First, it suffices to show this lemma for all $x$ sufficiently far away from $0$. We consider stopping time 
$$
\Gamma=\tau_{0}\wedge \tau_{\|x\|/2},
$$
By Lemma \ref{Lemma 1 1987}, we have 
$$
1=E_0\left[a(S_\Gamma)\big|  \tau_{\|x\|/2}<\tau_0\right]\BP_0\left(  \tau_{\|x\|/2}<\tau_0\right). 
$$
Thus by \eqref{property_a_4}, 
\beq
\label{denominator 1}
\BP_0\left(  \tau_{\|x\|/2}<\tau_0\right)=\frac{1}{E_0\left[a(S_\Gamma)\big|  \tau_{\|x\|/2}<\tau_0\right]}\ge \frac{\pi}{\log\|x\|}
\eeq
for all $x$ sufficiently far away from $0$. By strong Markov property, 
\beq
\label{denominator 1_5}
\begin{aligned}
\BP_0(\tau_x<\tau_0)&=\hspace{-0.2 in}\sum_{y\in \partial^{out}B(0,\|x\|/2)}\hspace{-0.2 in}\BP_0\left( \tau_{\|x\|/2}<\tau_0, \ S_{\tau_{\|x\|/2}}=y\right)\BP_y\left(\tau_x<\tau_0\right)\\
&\ge \frac{\pi}{\log\|x\|}  \min_{y\in \partial^{out}B(0,\|x\|/2)} \BP_y\left(\tau_x<\tau_0\right). 
\end{aligned}
\eeq
At the same time, for stopping time $\Gamma_1=\tau_{ \partial^{out} B(x,\|x\|/3)}$, and $\Gamma_2=\tau_{ \partial^{out} B(x,\|x\|/2)}$, we have 
\beq
\label{denominator 2}
\BP_y\left(\tau_x<\tau_0\right)\ge \hspace{-0.2 in}\sum_{z\in \partial^{out} B(x,\|x\|/3)} \hspace{-0.2 in}\BP_y\left(\Gamma_1<\tau_{\|x\|/3} , \ S_{\Gamma_1}=z\right)\BP_z\left(\tau_x<\Gamma_2 \right). 
\eeq
For the right hand side of \eqref{denominator 2}, we have by translation invariance of simple random walk,
$$
\BP_z\left(\tau_x<\Gamma_2 \right)=\BP_{z-x}\left(\tau_0<\tau_{\|x\|/2} \right).
$$
Moreover,
$$
\left[1-\BP_{z-x}\left(\tau_0<\tau_{\|x\|/2} \right)\right]E_{z-x}\left[a(S_{\Gamma})\big| \tau_{\|x\|/2}<\tau_0\right]=a(z-x),
$$
which implies that 
\beq
\label{denominator 3}
\BP_{z-x}\left(\tau_0<\tau_{\|x\|/2} \right)=\frac{E_{z-x}\left[a(S_{\Gamma})\big| \tau_{\|x\|/2}<\tau_0\right]-a(z-x)}{E_{z-x}\left[a(S_{\Gamma})\big| \tau_{\|x\|/2}<\tau_0\right]}. 
\eeq
Again, by Lemma  \ref{Lemma 1 1987}, we have that there are positive constants $c, C\in (0,\infty)$ such that uniformly for all $n$, $x$ and $z$ defined above,  
$$
E_{z-x}\left[a(S_{\Gamma})\big| \tau_{\|x\|/2}<\tau_0\right]-a(z-x)\ge c, 
$$
while 
$$
E_{z-x}\left[a(S_{\Gamma})\big| \tau_{\|x\|/2}<\tau_0\right]\le C \log\|x\|. 
$$
Thus we have
\beq
\label{denominator 4}
\BP_z\left(\tau_x<\Gamma_2 \right)=\BP_{z-x}\left(\tau_0<\tau_{\|x\|/2} \right)\ge \frac{c}{\log\|x\|}
\eeq
uniformly for all $n$, $x$ and $z$ defined above. 

On the other hand,  by invariance principle, there is a constant $c>0$ such that for any $y\in \partial^{out}B(0,\|x\|/2)$,
$$
\BP_y\left(\Gamma_1<\tau_{\|x\|/3} \right)\ge c. 
$$
Thus,
\beq
\label{denominator 5}
\BP_y\left(\tau_x<\tau_0\right)\ge \hspace{-0.2 in}\sum_{z\in \partial^{out} B(x,\|x\|/3)} \hspace{-0.2 in}\BP_y\left(\Gamma_1<\tau_{\|x\|/3} , \ X_{\Gamma_1}=z\right)\BP_z\left(\tau_x<\Gamma_2 \right)\ge \frac{c}{\log\|x\|}. 
\eeq
Now combining, \eqref{denominator 1}, \eqref{denominator 1_5}, and  \eqref{denominator 5}. The proof of this lemma is complete. \end{proof}

With Lemma \ref{lemma low denominator}, we look back at the right hand side of \eqref{away 6}. Noting that for any $z\in T_y$, $\tau_{T_y}\le\tau_z$ and that $\tau_{A_n}\le \tau_{D_n}$, we give the following upper bound estimate on its numerator: 
\begin{lemma}
\label{lemma up numerator}
Recall that $\alpha_2=(7+\alpha)/8$. Then for each $x\in A$, 
\beq
\BP_x\left(\tau_{T_y}<\tau_{D_n} \right)\le \frac{c}{n^{\alpha_2}}
\eeq
for all sufficiently large $n$ and all $y\in l_n^c$. 
\end{lemma}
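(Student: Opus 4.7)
My plan is to decompose the event $\{\tau_{T_y}<\tau_{D_n}\}$ according to whether the random walk reaches $T_y$ before touching the full line $L_0$, or instead first hits $L_0$ at a point outside $D_n$ and then continues to $T_y$. The geometric observation driving the proof is that for every $y\in l_n^c$ we have $|y^{(1)}|\ge \lfloor n^{\alpha_2}\rfloor+1$, so the vertical line $T_y=\{\lfloor y^{(1)}/2\rfloor\}\times[0,\infty)$ lies at horizontal distance $L:=\lfloor |y^{(1)}|/2\rfloor\ge n^{\alpha_2}/3$ from the fixed point $x$ once $n$ is large. Writing
$$
P_x(\tau_{T_y}<\tau_{D_n})\;\le\; P_x(\tau_{T_y}\le \tau_{L_0}) \;+\; P_x(\tau_{L_0}<\tau_{T_y}<\tau_{D_n}),
$$
it suffices to show each term is $O(n^{-\alpha_2})$ uniformly in $y\in l_n^c$.

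For the first term, I note that $T_y$ is contained in the two-line set $F_{m,\alpha}$ of Lemma \ref{l4} for any $\alpha\in(0,1)$ and $m$ with $\lfloor m^{1/\alpha}\rfloor=L$ (for instance $\alpha=\alpha_2$ and $m=\lfloor L^{\alpha_2}\rfloor$). Since hitting $T_y$ forces hitting $F_{m,\alpha}$, Lemma \ref{l4} immediately gives
$$
P_x(\tau_{T_y}<\tau_{L_0})\le P_x(\tau_{F_{m,\alpha}}<\tau_{L_0})\le c m^{-1/\alpha_2}\le c/L\le c n^{-\alpha_2}.
$$
The negligible contribution from $\{\tau_{T_y}=\tau_{L_0}\}$ (first passage to the single point $(\pm L,0)$) is controlled by the discrete Poisson kernel and is of order $O(x^{(2)}/L^2)$, which is absorbed into the constant.

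For the second term, observe that on $\{\tau_{L_0}<\tau_{T_y}<\tau_{D_n}\}$ the walk reaches $L_0$ strictly before $D_n$, which forces $S_{\tau_{L_0}}=(k,0)$ with $|k|>n$. Applying Theorem 8.1.2 of \cite{lawler2010random} (the discrete Poisson kernel asymptotic for a walk crossing $L_0$) gives
$$
P_x\bigl(S_{\tau_{L_0}}=(k,0)\bigr)\;\le\;\frac{C x^{(2)}}{(x^{(2)})^2+(k-x^{(1)})^2},
$$
and summing over $|k|>n$ yields an upper bound of order $C x^{(2)}/n$. Since $x$ is fixed and $\alpha_2=(7+\alpha)/8<1$, this is $O(1/n)=o(n^{-\alpha_2})$. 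Combining the two estimates gives the desired bound $P_x(\tau_{T_y}<\tau_{D_n})\le c\, n^{-\alpha_2}$.

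The only subtle point is uniformity in $y\in l_n^c$, but each of the two bounds above depends on $y$ only through $L$, and the uniform lower estimate $L\ge n^{\alpha_2}/3$ holds for every such $y$. I do not expect any serious obstacle: the proof is essentially a bookkeeping combination of the ``escape to a distant vertical line'' estimate of Lemma \ref{l4} with the standard half-plane Poisson kernel estimate; the only mildly delicate issue is to verify that the walk has no way to exploit the finiteness of $D_n$ better than by escaping to $|k|>n$ on $L_0$, which is exactly what the second term handles.
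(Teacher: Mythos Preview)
Your proof is correct. The decomposition into $\{\tau_{T_y}\le\tau_{L_0}\}$ and $\{\tau_{L_0}<\tau_{T_y}<\tau_{D_n}\}$ works: the first piece is controlled by Lemma~\ref{l4} (since $T_y$ is one of the two half-lines in the corresponding $F_{m,\alpha}$, so $\tau_{F_{m,\alpha}}\le\tau_{T_y}$), and the second piece forces $S_{\tau_{L_0}}\in L_0\setminus D_n$, which the half-plane Poisson kernel bounds by $O(x^{(2)}/n)=O(n^{-1})\subset O(n^{-\alpha_2})$ since $\alpha_2=(7+\alpha)/8<1$.

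The paper's argument is shorter and avoids the Poisson kernel entirely. It first reduces from $x$ to its projection $x_0=(x^{(1)},0)\in L_0$: since the vertical segment from $x_0$ to $x$ avoids both $T_y$ and $D_n\setminus\{x_0\}$ for large $n$, one has $P_{x_0}(\tau_{T_y}<\tau_{D_n})\ge 4^{-x^{(2)}}P_x(\tau_{T_y}<\tau_{D_n})$. From $x_0\in D_n$, reaching $T_y$ before returning to $D_n$ forces the walk to exit the rectangle $I_{\lfloor n^{\alpha_2}/4\rfloor}$ (after translating $x_0$ to the origin) through a non-bottom side, and Lemma~\ref{lemma combinatorial} bounds this directly by $C/\lfloor n^{\alpha_2}/4\rfloor$.

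So your approach trades the paper's ``project to $L_0$'' trick for an explicit case split; what you gain is that Lemma~\ref{l4} applies verbatim to the starting point $x$ (no need for the reduction step), at the cost of having to handle separately the event that the walk first lands on $L_0$ outside $D_n$. Both routes ultimately rest on the same rectangle-escape estimate; the paper's version packages it in a single application.
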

\begin{proof}
For any given $x\in A$, define $x_0=(x^{(1)},0)$ be the projection of $x$ on $L_0$. Note that $x_0$ and $x$ are connected by a path independent to $n$, which implies that there is a constant $c>0$ also independent to $n$ such that 
$$
\BP_{x_0}\left(\tau_{T_y}<\tau_{D_n} \right)\ge c \BP_x\left(\tau_{T_y}<\tau_{D_n} \right).
$$
Thus to prove Lemma \ref{lemma up numerator} it suffices to replace $x$ by $x_0$. Moreover, recall that  $l_n^c=\partial^{in}_lBox(n)\cup\partial^{in}_rBox(n)\cup\partial^{in}_uBox(n)\setminus l_n$. For any $y\in l_n^c$, by the translation invariance of simple random walk, we have
$$
\BP_{x_0}\left(\tau_{T_y}<\tau_{D_n} \right)\le \BP_0\left(\tau_{I_{\lfloor n^{\alpha_2}/4\rfloor}}<\tau_{D_n} \right).
$$
Here recall the definition of $I_n$ in \eqref{set A_n}. Now by lemma \ref{lemma combinatorial},
$$
\BP_0\left(\tau_{I_{\lfloor n^{\alpha_2}/4\rfloor}}<\tau_{D_n} \right)\le \frac{C}{\lfloor n^{\alpha_2}/4\rfloor}
$$
and the proof of this lemma is complete. 
\end{proof}
Now apply \eqref{away 5}, \eqref{away 6}, Lemma \ref{lemma low denominator}, and Lemma \ref{lemma up numerator} together to the last term of \eqref{away 2}, we have 
$$
\begin{aligned}
\hspace{-0.2 in}\max_{z\in T_y, \ z^{(2)}< n^4}\hspace{-0.2 in} \BP_z \left(S_{\bar \tau_{A_n}}=x \right) \BP_y\left(\tau_{T_y}<\bar \tau_{A_n}\right)&\le C n^{-\alpha_2-(\alpha_2-\alpha_1)/2}(\log n)^2\\
&\le Cn^{-\frac{17}{16}+\frac{\alpha}{16}}(\log n)^2\ll n^{-1} 
\end{aligned}
$$
for all sufficiently large $n$. Thus, the proof of Proposition \ref{lemma away} is complete. \qed

\subsection{Proof of Proposition \ref{lemma stationary harmonic}}

To show 
$$
\lim_{n\to\infty} \sum_{y\in l_n}  \BP_y\left(S_{\bar \tau_{A_n}}=x\right)= \sharm_A(x), 
$$
we first prove that 
\begin{lemma}
\label{lemma away 2}
For any $x\in A$ and the truncations $A_n$ defined in \eqref{truncation}
\beq
\label{stationary harmonic}
\lim_{n\to\infty} \sum_{y\in l_n}  \BP_y\left(S_{\bar \tau_{A}}=x\right)= \sharm_A(x). 
\eeq
\end{lemma}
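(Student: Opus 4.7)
The plan is to identify $\sum_{y\in l_n}\BP_y(S_{\bar\tau_A}=x)$ with the finite-level stationary measure $\sharm_{A,m}(x)$ for $m=\lfloor n^{\alpha_1}\rfloor$, up to a tail contribution that vanishes, and then invoke the existence of $\sharm_A(x)$ as the $N\to\infty$ limit of $\sharm_{A,N}(x)$.

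Set $m=m(n)=\lfloor n^{\alpha_1}\rfloor$. Since $L_0\subset A$, the stopping times $\bar\tau_A$ and $\bar\tau_{A\cup L_0}$ coincide, and since $A$ has polynomial sub-linear growth with exponent $\alpha<\alpha_1$ we have $L_m\cap A=\emptyset$ for all sufficiently large $n$. Hence by the definition of $\sharm_{A,m}(x)$,
\beq
\sharm_{A,m}(x)=\sum_{y\in L_m}\BP_y(S_{\bar\tau_A}=x)=\sum_{y\in l_n}\BP_y(S_{\bar\tau_A}=x)+\mathcal{T}(n),
\eeq
where $\mathcal{T}(n):=\sum_{y\in L_m,\,|y^{(1)}|>\lfloor n^{\alpha_2}\rfloor}\BP_y(S_{\bar\tau_A}=x)$. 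By Proposition \ref{pp} and the existence of the vertex-version limit stated immediately thereafter, $\sharm_{A,m(n)}(x)\to\sharm_A(x)$ as $n\to\infty$, so the lemma reduces to showing $\mathcal{T}(n)\to 0$.

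For the tail I recycle the time-reversibility bound from the proof of Lemma \ref{l5}. Using $\BP_y(\tau_x\le\tau_{L_0})\le E_x[\#\text{ visits to }y\text{ before }\tau_{L_0}]$ and summing over $y$ in the tail range,
\beq
\mathcal{T}(n)\le E_x\Big[\#\text{ visits to }\{w\in L_m:|w^{(1)}|>\lfloor n^{\alpha_2}\rfloor\}\text{ before }\tau_{L_0}\Big]\le 4m\,\BP_x(\tau_{F^\star_n}<\tau_{L_0}),
\eeq
where $F^\star_n=\{\pm\lfloor n^{\alpha_2}\rfloor\}\times\ZZ_{\ge 0}$, and the factor $4m$ is the standard upper bound on the expected number of visits to $L_m$ before $\tau_{L_0}$ used inside Lemma \ref{l5}. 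Applying Lemma \ref{lemma combinatorial} to the rectangle $I_{\lfloor n^{\alpha_2}/2\rfloor}$ exactly as in the proof of Lemma \ref{l4} yields $\BP_x(\tau_{F^\star_n}<\tau_{L_0})\le C_x n^{-\alpha_2}$, and hence $\mathcal{T}(n)\le C_x n^{\alpha_1-\alpha_2}\to 0$, since $\alpha_2-\alpha_1=3(1-\alpha)/8>0$ for $\alpha\in(0,1)$.

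The argument is essentially a repackaging of Lemmas \ref{l4}--\ref{l5}; the one substantive observation is the strict inequality $\alpha_2>\alpha_1$, built into the specific exponents chosen in the definition of $Box(n)$ and $l_n$, which is what makes the narrow horizontal window of width $2\lfloor n^{\alpha_2}\rfloor$ on the line $L_{\lfloor n^{\alpha_1}\rfloor}$ still asymptotically capture the full stationary harmonic measure $\sharm_A(x)$. The main (but minor) obstacle is verifying that the $4m$ visit-count bound and the Beurling-type escape estimate both hold with constants independent of $n$, which is immediate from the proofs of the cited lemmas.
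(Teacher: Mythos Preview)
Your proof is correct and follows essentially the same route as the paper: split $\sharm_{A,\lfloor n^{\alpha_1}\rfloor}(x)$ into the $l_n$ portion plus a tail, and kill the tail via reversibility together with the rectangle escape estimate of Lemma~\ref{lemma combinatorial}, arriving at the same $n^{\alpha_1-\alpha_2}$ decay (the paper writes the visit-count bound as a ratio $P_x/\min_z P_z$ rather than your $4m\cdot P_x$, but these are the same estimate). One minor slip: the claim $L_m\cap A=\emptyset$ need not hold, since polynomial sub-linear growth allows points of $A$ at height $m$ with first coordinate $\gtrsim m^{1/\alpha}$; however this is harmless, because any such $y\in L_m\cap A$ satisfies $\BP_y(S_{\bar\tau_A}=x)=\mathbb{1}_{y=x}=0$ for large $n$ and lies outside $l_n$, so your decomposition equation remains valid.
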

\begin{proof}
Recall that by definition that 
$$
 \sharm_A(x)=\lim_{k\to\infty} \sum_{z\in L_k} \BP_z\left(S_{\bar \tau_{A}}=x\right)
$$
and that 
$$
l_n=\left[- \lfloor n^{\alpha_2}\rfloor, \lfloor n^{\alpha_2}\rfloor\right]\times\left\{ \lfloor n^{\alpha_1}\rfloor\right\}.
$$
Thus 
$$
\lim_{n\to\infty} \sum_{z\in L_{\lfloor n^{\alpha_1}\rfloor}} \BP_z\left(S_{\bar \tau_{A}}=x\right)=  \sharm_A(x),
$$
while in order to prove Lemma \ref{lemma away 2}, it suffices to show that 
\beq
\label{away 2_1}
\lim_{n\to\infty} \sum_{z\in L_{\lfloor n^{\alpha_1}\rfloor}\setminus l_n} \BP_z\left(S_{\bar \tau_{A}}=x\right)= 0.
\eeq
Apply reversibility of simple random walk on each $z\in L_{\lfloor n^{\alpha_1}\rfloor}\setminus l_n$, we have 
\beq
\label{away 2_2}
\begin{aligned}
\sum_{z\in L_{\lfloor n^{\alpha_1}\rfloor}\setminus l_n} \BP_z\left(S_{\bar \tau_{A}}=x\right)&= E_x\left[\# \text{ of visits to }   L_{\lfloor n^{\alpha_1}\rfloor}\setminus l_n \text{ in } [0,\bar\tau_{A})\right]\\
&\le \frac{\BP_x\left(\tau_{  L_{\lfloor n^{\alpha_1}\rfloor}\setminus l_n }<\tau_{L_0} \right)}{\displaystyle \min_{z\in   L_{\lfloor n^{\alpha_1}\rfloor}\setminus l_n }\BP_z\left(\tau_{L_0}<\tau_{  L_{\lfloor n^{\alpha_1}\rfloor}\setminus l_n }\right)}. 
\end{aligned}
\eeq
First, for the denominator of \eqref{away 2_2}, note that 
$$
\tau_{  L_{\lfloor n^{\alpha_1}\rfloor} }\le \tau_{  L_{\lfloor n^{\alpha_1}\rfloor}\setminus l_n }
$$ 
We have for any $z\in   L_{\lfloor n^{\alpha_1}\rfloor}\setminus l_n$
\beq
\label{away 2_3}
\BP_z\left(\tau_{L_0}<\tau_{  L_{\lfloor n^{\alpha_1}\rfloor}\setminus l_n }\right)\ge \BP_z\left(\tau_{L_0}<\tau_{  l_{\lfloor n^{\alpha_1}\rfloor} }\right)\ge \frac{c}{\lfloor n^{\alpha_1}\rfloor}. 
\eeq
On the other hand, using exactly the same argument as in the proof of Lemma \ref{lemma up numerator}
\beq
\label{away 2_4}
\BP_x\left(\tau_{  L_{\lfloor n^{\alpha_1}\rfloor}\setminus l_n }<\tau_{L_0} \right)\le \frac{C}{\lfloor n^{\alpha_2}\rfloor}.
\eeq
Thus, combining \eqref{away 2_2}-\eqref{away 2_4}, the proof Lemma \ref{lemma away 2} is complete. 
\end{proof}

Now with Lemma \ref{lemma away 2}, it suffices to prove that 
\beq
\label{stationary harmonic 2}
\lim_{n\to\infty} \sum_{y\in l_n}  \left[\BP_y\left(S_{\bar \tau_{A_n}}=x\right)-\BP_y\left(S_{\bar \tau_{A}}=x\right) \right]=0. 
\eeq
Again by reversibility, 
$$
\BP_y\left(S_{\bar \tau_{A_n}}=x\right)=E_x\left[\# \text{ of visits to }y \text{ in }[0, \tau_{A_n}) \right]
$$
and 
$$
\BP_y\left(S_{\bar \tau_{A}}=x\right)=E_x\left[\# \text{ of visits to }y \text{ in }[0, \tau_{A}) \right],
$$
which implies that for each $y$
$$
\BP_y\left(S_{\bar \tau_{A_n}}=x\right)-\BP_y\left(S_{\bar \tau_{A}}=x\right)=E_x\left[\# \text{ of visits to }y \text{ in }[\tau_{A}, \tau_{A_n}) \right]
$$
and that 
\beq
\label{stationary harmonic 2_5}
\begin{aligned}
& \sum_{y\in l_n}  \left[\BP_y\left(S_{\bar \tau_{A_n}}=x\right)-\BP_y\left(S_{\bar \tau_{A}}=x\right) \right]\\
&=E_x\left[\# \text{ of visits to }l_n \text{ in }[\tau_{A}, \tau_{A_n}) \right].
\end{aligned}
\eeq
Here we use the natural convention that the number of visits equals to 0 over an empty interval. Moreover, define $\bar T_n=\{-n,n\}\times [0,\infty)$ and 
$$
\Gamma_4=\inf\{n> \tau_A, \ S_n \in \bar T_n \}. 
$$
Noting that 
$$
\{\tau_{A}<\Gamma_4< \tau_{A_n}\}\subset \{\tau_{A}< \tau_{A_n}\}\subset \{\tau_{\bar T_n}<\tau_{A_n} \},
$$
thus by strong Markov property, one can see that 
\beq
\label{stationary harmonic 3}
E_x\left[\# \text{ of visits to }l_n \text{ in }[\tau_{A}, \tau_{A_n}) \right]\le \frac{\BP_x\left(\tau_{\bar T_n}<\tau_{A_n}  \right)}{\displaystyle\min_{z\in l_n}\BP_z(\tau_{A_n}<\tau_{l_n})}. 
\eeq
First, for any $z=(z^{(1)},z^{(2)})\in l_n$, consider 
$$
(z^{(1)},0)+ \Big\{ [-\lfloor n^{\alpha_1}\rfloor, \lfloor n^{\alpha_1}\rfloor]\times [0,\lfloor n^{\alpha_1}\rfloor] \Big\} . 
$$
By Lemma \ref{lemma combinatorial} and translation/reflection invariance of simple random walk, 
\beq
\label{stationary harmonic 4}
\begin{aligned}
\BP_z(\tau_{A_n}<\tau_{l_n})&\ge \BP_0\left(\tau_{\partial^{in}_{u} I_{\lfloor n^{\alpha_1}\rfloor}}<\tau_{L_0}\right)\\
&\ge \BP_0\left(\tau_{\partial^{in}_{u} I_{\lfloor n^{\alpha_1}\rfloor}} =\tau_{\partial^{in} I_{\lfloor n^{\alpha_1}\rfloor}}\right)\\
&\ge \frac{1}{2} \BP_0\left(\tau_{\partial^{in} I_{\lfloor n^{\alpha_1}\rfloor}}<\tau_{L_0}\right)\\
&\ge \frac{1}{2} \BP_0\left(\tau_{L_{\lfloor n^{\alpha_1}\rfloor}}<\tau_{L_0}\right)= \frac{1}{8 \lfloor n^{\alpha_1}\rfloor}. 
\end{aligned}
\eeq
On the other hand, we have 
\beq
\label{stationary harmonic 5}
\begin{aligned}
\BP_x\left(\tau_{\bar T_n}<\tau_{A_n}  \right)&\le \BP_x\left(\tau_{\bar T_n}<\tau_{L_0}  \right)\\
&\le C\BP_0\left(\tau_{\partial^{in} I_{\lfloor n/2 \rfloor}}<\tau_{L_0}\right)\\
&\le 2C  \BP_0\left(\tau_{\partial^{in}_{u} I_{\lfloor n^{\alpha_1}\rfloor}}=\tau_{\partial^{in} I_{\lfloor n/2\rfloor}}\right)
&\le 2 C\BP_0\left(\tau_{L_{\lfloor n/2\rfloor}}<\tau_{L_0} \right)\le \frac{C}{n}. 
\end{aligned}
\eeq
Now combining \eqref{stationary harmonic 2_5}-\eqref{stationary harmonic 5}, we have shown \eqref{stationary harmonic 2} and the proof of Proposition \ref{lemma stationary harmonic} is complete. \qed

\subsection{Proof of Proposition \ref{lemma coupling two}}

At this point, in order to prove Theorem  \ref{theorem part 2}, we only need to show that for all sufficiently large $n$ and any $y\in l_n$, $2\harm_{Box(n)}(y)/\harm_{D_n}(0)$ can be arbitrarily close to one. First, for any $y\in l_n$, define 
$$
M(y,n)=n+\big|y^{(1)}\big|, \ \ m(y,n)=n-\big|y^{(1)}\big|.
$$
Recall that $Box(n)=[-n,n]\times \left[0,\lfloor n^{\alpha_1}\rfloor\right]$ and that $l_n=\left[- \lfloor n^{\alpha_2}\rfloor, \lfloor n^{\alpha_2}\rfloor\right]\times\left\{ \lfloor n^{\alpha_1}\rfloor\right\}$. We have
$$
n-\lfloor n^{\alpha_2}\rfloor\le m(y,n)\le n\le M(y,n)\le n+\lfloor n^{\alpha_2}\rfloor.
$$
Moreover, noting that 
$$
Box(n)\subset \left[y^{(1)}-M(y,n), y^{(1)}+M(y,n)\right]\times  \left[0,\lfloor n^{\alpha_1}\rfloor\right]
$$
and that 
$$
\left[y^{(1)}-m(y,n), y^{(1)}+m(y,n)\right]\times  \left[0,\lfloor n^{\alpha_1}\rfloor\right]\subset  Box(n),
$$
by definition we have 
$$
H_{\left[y^{(1)}-M(y,n), y^{(1)}+M(y,n)\right]\times  \left[0,\lfloor n^{\alpha_1}\rfloor\right]}(y)\le \harm_{Box(n)}(y)
$$
and 
$$
H_{\left[y^{(1)}-m(y,n), y^{(1)}+m(y,n)\right]\times  \left[0,\lfloor n^{\alpha_1}\rfloor\right]}(y)\ge \harm_{Box(n)}(y).
$$
Thus, combine translation invariance and Theorem \ref{theorem harmonic limit}, and note that for all $y\in l_n$, $M^{-1}(y,n)-n^{-1}=o(n^{-1})$, $m^{-1}(y,n)-n^{-1}=o(n^{-1})$. It is immediate to see that Proposition \ref{lemma coupling two} is equivalent to the following statement:
\begin{lemma}
\label{lemma difference}
For all integers $m,n>0$, define 
$$
\widehat{Box}(m,n)=[-n,n]\times [-m,0]. 
$$
For any $\ep>0$, we have 
\beq
\label{difference 1}
\harm_{D_n}(0)-2\harm_{\widehat{Box}(m,n)}(0) \in \left[0, \ \frac{\ep}{n}\right)
\eeq
for all sufficiently large $n$ and all $0<m\le 2 n^{\alpha_1}$. 
\end{lemma}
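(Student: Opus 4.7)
The plan is to adapt the full strategy used for Theorem \ref{theorem harmonic limit} from the segment $D_n$ to the thin rectangle $\widehat{Box}(m,n)$, uniformly in $m\le 2n^{\alpha_1}$. The non-negativity $\harm_{D_n}(0) - 2\harm_{\widehat{Box}(m,n)}(0) \ge 0$ is immediate by reflection: since $D_n$ is symmetric about the $x$-axis, $\harm_{D_n}(0) = 2\harm^+_{D_n}(0)$, where $\harm^+_{D_n}(0)$ counts walks from infinity that first hit $D_n$ at $(0,0)$ arriving from $(0,1)$. Because $(0,0)$ lies on the top edge of $\widehat{Box}(m,n)$ and $D_n \subset \widehat{Box}(m,n)$, any walk that first hits $\widehat{Box}(m,n)$ at $(0,0)$ must approach from $(0,1)$, and in particular is counted in $\harm^+_{D_n}(0)$. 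Therefore $\harm_{\widehat{Box}(m,n)}(0) \le \harm^+_{D_n}(0) = \frac{1}{2}\harm_{D_n}(0)$.

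For the upper bound $\harm_{D_n}(0) - 2\harm_{\widehat{Box}(m,n)}(0) < \ep/n$, I would prove three analogues of the ingredients used to establish Theorem \ref{theorem harmonic limit}: (i) a rate bound $c/n \le \harm_{\widehat{Box}(m,n)}(0) \le C/n$ uniformly in $m \le 2n^{\alpha_1}$, obtained from the escape-probability/reversibility argument of Propositions \ref{Proposition limsup}--\ref{Proposition liminf}, using that $P_0(\tau_{2n}<\tau_{\widehat{Box}(m,n)})$ and $P_0(\tau_{2n}<\tau_{D_n})$ are of the same order since the slab lies well inside $B(0,2n)$; (ii) a coupling bound $|\harm_{\widehat{Box}(m,n)}(x) - \harm_{\widehat{Box}(m,n)}(0)| < \ep/n$ for $x \in [-\delta n, \delta n]\times\{0\}$, obtained by repeating the maximal-coupling argument of Proposition \ref{Proposition coupling}, noting that the slab $\widehat{Box}(m,n)$ is contained in the set $A^\ep_n$ used there so that the side-escape and coupling-completion estimates go through with only trivial modifications; and (iii) the Brownian scaling limit
\begin{equation*}
\lim_{n\to\infty}\harm_{\widehat{Box}(m,n)}\!\left([-\delta n, \delta n]\times\{0\}\right) = \frac{1}{2}\,\bharm_{[-1,1]\times\{0\}}\!\left([-\delta, \delta]\times\{0\}\right),
\end{equation*}
uniformly in $m = m(n) \le 2n^{\alpha_1}$. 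Combining (i)--(iii) in the same manner as in the proof of Theorem \ref{theorem harmonic limit} (writing the interval measure as $(2\delta n + 1)\harm_{\widehat{Box}(m,n)}(0)$ up to an error $O(\ep)$ via (ii), then passing to the limit via (iii)), and comparing with Theorem \ref{theorem harmonic limit}, yields $2n\harm_{\widehat{Box}(m,n)}(0)\to\lim_{n\to\infty} n\harm_{D_n}(0)$ uniformly in $m \le 2n^{\alpha_1}$, which gives the lemma.

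The main obstacle will be step (iii); the factor of $1/2$ is precisely the point. As the aspect ratio $m/n \to 0$, the rescaled rectangle $[-1,1]\times[-m/n, 0]$ degenerates to the segment $[-1,1]\times\{0\}$, and by Brownian reflection symmetry the hitting measure on its top edge should converge to half the Brownian hitting measure on the degenerate line. To make this rigorous uniformly in $m \le 2n^{\alpha_1}$, I would combine an invariance-principle argument akin to Lemma \ref{brownian} with the following reflection trick in the continuum: consider the doubled rectangle $[-1,1]\times[-m/n, m/n]$, which is symmetric under $y\mapsto -y$; Brownian motion from large circles hits its top and bottom long edges with equal probability, while as $m/n\to 0$ the two long edges together accumulate asymptotically all of the hitting mass (the two short sides contributing at most $O(m/n)$), which identifies the top-edge mass as one half of $\bharm_{[-1,1]\times\{0\}}([-\delta,\delta]\times\{0\})$ in the limit.
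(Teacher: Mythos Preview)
Your lower-bound argument (reflection through $L_0$, so that $\harm_{D_n}(0)=2\harm^+_{D_n}(0)\ge 2\harm_{\widehat{Box}(m,n)}(0)$) is exactly what the paper does.

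For the upper bound, however, the paper takes a completely different and far more direct route than your three-step programme. Instead of rebuilding the whole Theorem~\ref{theorem harmonic limit} machinery for the box, the paper simply makes the symmetric decomposition explicit at the path level: for a walk from $(k,0)$,
\[
\bigl\{\tau_{D_n}=\tau_0,\ S_{\tau_0-1}=(0,1)\bigr\}\setminus\bigl\{\tau_{\widehat{Box}(m,n)}=\tau_0\bigr\}
\subset \bigl\{\tau_{\widehat{Box}(m,n)\setminus D_n}<\tau_{D_n}\bigr\},
\]
so by the strong Markov property
\[
\tfrac12\harm_{D_n}(0)-\harm_{\widehat{Box}(m,n)}(0)\ \le\ \max_{y\in \widehat{Box}(m,n)\setminus D_n} P_y\bigl(\tau_{(0,1)}<\tau_{D_n}\bigr).
\]
Each such $y$ lies within distance $m\le 2n^{\alpha_1}$ of $D_n$, so a discrete Beurling estimate forces the walk to hit $D_n$ before escaping to a macroscopic ball with probability $1-O(n^{-(1-\alpha_1)/2})$; combined with a reversibility bound $P_z(\tau_{(0,1)}<\bar\tau_{D_n})\le C(\log n)^2/n$ for $z$ on that ball, this gives the right-hand side $\le Cn^{-(3-\alpha_1)/2}(\log n)^2=o(1/n)$, uniformly in $m$. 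One estimate, no liminf/limsup, no invariance principle.

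Your approach could in principle be pushed through, but step~(iii) as written has two soft spots. First, your doubled-rectangle reflection is about the symmetric box $[-1,1]\times[-m/n,m/n]$ with top edge at height $m/n$, whereas the object you need is the asymmetric box $[-1,1]\times[-m/n,0]$ with top edge at height $0$; the two are not related by the reflection you invoke, and an extra argument is needed to transfer the conclusion. Second, the invariance-principle step of Lemma~\ref{brownian} type must hold \emph{uniformly} over all $1\le m\le 2n^{\alpha_1}$; when $m$ is bounded the rescaled box has height $O(1/n)$ and the Brownian limit is degenerate, so the convergence needs separate justification rather than a direct appeal to Lemma~\ref{brownian}. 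Both points are repairable, but the total work would substantially exceed the paper's one-line difference bound above.
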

\begin{proof}
First, for the lower bound estimate, note that  
$$
D_n\subset \widehat{Box}(m,n)
$$
and that by the definition of harmonic measure, we have 
$$
\harm_{D_n}(0)=\lim_{k\to\infty} \BP_{(k,0)}\left(\tau_{D_n}=\tau_0 \right)
$$
and that 
$$
\harm_{\widehat{Box}(m,n)}(0)=\lim_{k\to\infty} \BP_{(k,0)}\left(\tau_{\widehat{Box}(m,n)}=\tau_0 \right).
$$
Moreover, by symmetry we have for all $k>n$, 
$$
\BP_{(k,0)}\left(\tau_{D_n}=\tau_0 \right)=2\BP_{(k,0)}\left(\tau_{D_n}=\tau_0, \ S_{\tau_0-1} =(0,1)\right).
$$
At the same time on can see that in the event $\left\{\tau_{\widehat{Box}(m,n)}=\tau_0 \right\}$, the random walk has to visit $0$ through $(0,1)$, which implies that 
$$
\BP_{(k,0)}\left(\tau_{D_n}=\tau_0, \ S_{\tau_0-1}=(0,1)\right)\ge  \BP_{(k,0)}\left(\tau_{\widehat{Box}(m,n)}=\tau_0 \right).
$$ 
Taking limit as $k\to\infty$, we have shown the lower bound estimate. For the upper bound estimate, again we note that for each sufficiently large $k$ and a random walk starting from $(k,0)$
\beq
\label{difference 2}
\begin{aligned}
&\left\{\tau_{D_n}=\tau_0, \ S_{\tau_0-1}=(0,1)\right\}\setminus \left\{\tau_{\widehat{Box}(m,n)}=\tau_0 \right\}\\
&=\left\{\tau_{D_n}=\tau_0, \ S_{\tau_0-1}=(0,1)\right\}\cap \left\{\tau_{\widehat{Box}(m,n)\setminus D_n}< \tau_{D_n}\right\},
\end{aligned}
\eeq
which, by strong Markov property implies that 
\beq
\label{difference 3}
\begin{aligned}
&\BP_{(k,0)}\left(\tau_{D_n}=\tau_0, \ S_{\tau_0-1}=(0,1)\right)-  \BP_{(k,0)}\left(\tau_{\widehat{Box}(m,n)}=\tau_0 \right)\\
\le& \max_{y\in \widehat{Box}(m,n)\setminus D_n} \BP_y\left(\tau_{(0,1)}<\tau_{D_n} \right). 
\end{aligned}
\eeq
Now in order to find the upper bound of the right hand side of \eqref{difference 3}, we consider the following two cases based on the location of point $y=(y^{(1)}, y^{(2)})\in \widehat{Box}(m,n)\setminus D_n$: 

\

\noindent \large {\bf Case 1:} \normalsize  

\begin{figure}[h]
\centering 
\begin{tikzpicture}[scale=0.5]
\tikzstyle{redcirc}=[circle,
draw=black,fill=red,thin,inner sep=0pt,minimum size=2mm]
\tikzstyle{bluecirc}=[circle,
draw=black,fill=blue,thin,inner sep=0pt,minimum size=2mm]

\draw [black,thick] (10,0) to (0,0);

\draw[black,thick] (10,0) to (20,0);

\draw [red,dashed] (0,0) to (0,-2);

\draw [red,dashed] (20,0) to (20,-2);

\draw[red,dashed] (0,-2) to (20,-2);

\node (v1) at (17,-1) {$Box(m,n)$};

\node (v1) at (11.2,0.5) {$(0,1)$};

\draw[blue,dashed] (15,0) arc (0:360:5) ;

\draw [red,thick] plot [smooth, tension=2.5] coordinates {(9,-2) (8, -1) (7,-3) (6.5,-2) (6,-3)};

\node (v5) at (6,-3) [bluecirc] {};

\draw[blue,dashed] (13, 0) to (13, 3); 

\draw[blue,dashed] (7, 0) to (7, 3); 

\draw[blue,dashed] (7, 3) to (13, 3);

\draw [black,dashed] plot [smooth, tension=2.5] coordinates {(6,-3) (-6,0)   (7,4)};

\draw [green,thick] plot [smooth, tension=2.5] coordinates {(10,0.5) (10.27, 2) (10.54, 1.5) (10.8,3) (11, 2.5) (9, 3.5) (8,3) (7.5,2) (7,4)};

\node (v5) at (7,4) [bluecirc] {};

\node (v0) at (10,0.5) [bluecirc] {};

\node (v0) at (10.8,3) [bluecirc] {};

\node (v5) at (9,-2) [bluecirc] {};

\node (v6) at (9.5,-2.5) {$y$};

\node (v6) at (16.9,5.2) {$\BP$};

\draw [green,thick] (17,4.9) to (17.5,4.9);

\node (v6) at (20.36,5.2) {$=O(n^{-1}\log n)$};

\node (v6) at (16.9,4.2) {$\BP$};

\draw [red,thick] (17,3.9) to (17.5,3.9);

\node (v6) at (20.4,4.2) {$=O(n^{-(1-\alpha)/2})$};

\end{tikzpicture}
\caption{Illustration of proof for Case 1}
\end{figure}

\mn If $\big| y^{(1)}\big|\le n/3$, for all nearest neighbor paths starting at $y$ which hit $(0,1)$ before $D_n$, they first have to hit $\partial^{out}B(0,n/2)$. Thus we have 
\beq
\label{difference 4}
\begin{aligned}
\BP_y\left(\tau_{(0,1)}<\tau_{D_n} \right)&=\hspace{-0.25 in}\sum_{z\in \partial^{out}B(0,n/2)} \hspace{-0.25 in}\BP_y\left(\tau_{n/2}<\tau_{D_n}, \ S_{\tau_{n/2}}=z \right)\BP_z\left(\tau_{(0,1)}<\tau_{D_n} \right)\\
&\le  \BP_y\left(\tau_{n/2}<\tau_{D_n}\right) \max_{z\in \partial^{out}B(0,n/2)}\BP_z\left(\tau_{(0,1)}<\bar \tau_{D_n} \right). 
\end{aligned}
\eeq
For the first term of the right hand side of \eqref{difference 4}, recalling that $d(y,D_n)=\big|y^{(2)}\big|=m\le 2n^{\alpha_1}$ and that $\big| y^{(1)}\big|<n/3$, we have by the same Beurling estimate, there exists a constant $C<\infty$ independent to the choice of $n,m$ and $y$ satisfying Case 1, such that 
\beq
\label{difference 4_5}
\BP_y\left(\tau_{n/2}<\tau_{D_n}\right)\le Cn^{-(1-\alpha_1)/2}. 
\eeq
At the same time, for any $z\in \partial^{out}B(0,n/2)$, to control the upper bound on $\BP_z\left(\tau_{(0,1)}<\bar \tau_{D_n} \right)$, one can concentrate on the upper half plane, since each path from $y$ to $(0,1)$ must pass through some point $z \in \partial^{out}B(0,n/2) \cap \{x \in \upspace : x^{(2)} >0 \}$. Now for any such $z$, by reversibility, we have 
\beq
\label{difference 5}
\begin{aligned}
\BP_z\left(\tau_{(0,1)}<\bar \tau_{D_n} \right)&=E_{(0,1)} \left[\# \text{ of visits to }z \text{ in }[0,\tau_{D_n\cup \{(0,1)\}})  \right]\le\frac{\BP_{(0,1)}\left(\tau_{z}<\bar \tau_{D_n} \right)}{\BP_z\left(\bar \tau_{D_n}<\tau_{z} \right)}.
\end{aligned}
\eeq
For the numerator, note that for all sufficiently large $n$, $[-\lfloor n/3 \rfloor,\lfloor n/3 \rfloor]\times [0,\lfloor n/3 \rfloor]\subset B(0,n/2)$. Applying the same argument as we repeatedly used in this paper, we have 
$$
\BP_{(0,1)}\left(\tau_{z}<\bar \tau_{D_n} \right)\le \frac{C}{n}. 
$$
At the same time, 
$$
\begin{aligned}
\BP_z&\left(\bar \tau_{D_n}<\tau_{z} \right)\\
&\ge \hspace{-0.26 in} \sum_{w\in \partial^{out} B(z, \frac{z^{(2)}}{2})} \hspace{-0.26 in} \BP_z\left(\bar \tau_{\partial^{out} B(z, \frac{z^{(2)}}{2})}<\tau_{z}, \bar \tau_{\partial^{out} B(z, \frac{z^{(2)}}{2})}=\tau_{w} \right) \BP_w\left(\bar \tau_{D_n}<\tau_{\partial^{out} B(z, \frac{z^{(2)}}{3})} \right). 
\end{aligned}
$$
And by invariance principle and the fact that $z^{(2)}\in (0,n]$, we have there is a constant $c>0$ independent to the choices of $n, z$ and $w$, such that 
$$
\BP_w\left(\bar \tau_{D_n}<\tau_{\partial^{out} B(z, \frac{z^{(2)}}{3})} \right)\ge c. 
$$
Thus by Lemma \ref{lemma low denominator},
$$
\BP_z\left(\bar \tau_{D_n}<\tau_{z} \right)\ge c \BP_z\left(\bar \tau_{\partial^{out} B(z, \frac{z^{(2)}}{2})}<\tau_{z}\right)\ge \frac{c}{ \big( \log \frac{z^{(2)}}{2} \big)^2 } \ge \frac{c}{(\log n)^2 },
$$
which by \eqref{difference 5} implies that 
\beq
\label{difference 6}
\begin{aligned}
\BP_z\left(\tau_{(0,1)}<\bar \tau_{D_n} \right)\le \frac{C (\log n)^2 }{n}.
\end{aligned}
\eeq
Now combining \eqref{difference 3}, \eqref{difference 4}, \eqref{difference 4_5}, and \eqref{difference 6}, 
\beq
\label{difference 7}
\begin{aligned}
\BP_y\left(\tau_{(0,1)}<\tau_{D_n} \right)\le C n^{-(3-\alpha_1)/2} (\log n)^2\ll n^{-1}
\end{aligned}
\eeq
and thus our lemma hold when $y$ in Case 1.

\noindent \large {\bf Case 2:} \normalsize  
\begin{figure}[h]
\centering 
\begin{tikzpicture}[scale=0.5]
\tikzstyle{redcirc}=[circle,
draw=black,fill=red,thin,inner sep=0pt,minimum size=2mm]
\tikzstyle{bluecirc}=[circle,
draw=black,fill=blue,thin,inner sep=0pt,minimum size=2mm]

\draw [black,thick] (10,0) to (0,0);

\draw[black,thick] (10,0) to (20,0);

\draw [red,dashed] (0,0) to (0,-2);

\draw [red,dashed] (20,0) to (20,-2);

\draw[red,dashed] (0,-2) to (20,-2);

\draw[blue,dashed] (13,0) arc (0:360:3) ;

\draw[blue,dashed] (4,-2) arc (0:360:3) ;

\draw [black, dashed] plot [smooth, tension=2.5] coordinates {(1-1.8,-2+2.4) (4,5) (10-2.4,1.8)};

\draw [red, thick] plot [smooth, tension=2.5] coordinates {(1,-2) (0.5,-3) (0,-3.5) (-0.5, -2) (-1, -1.7) (-0.5,-0.5) (1-1.8,-2+2.4)};

\draw  [green, thick] plot [smooth, tension=2.5] coordinates {(10-2.4,1.8) (8,1.4) (9,2) (9.5, 1) (10,0.5)};

\node (v5) at (1,-2) [bluecirc] {};

\node (v1) at (17,-1) {$Box(m,n)$};

\node (v1) at (11.2,0.5) {$(0,1)$};

\node (v0) at (10,0.5) [bluecirc] {};

\node (v6) at (1.5,-2.5) {$y$};

\node (v0) at (1-1.8,-2+2.4) [bluecirc] {};

\node (v0) at (10-2.4,1.8) [bluecirc] {};

\node (v6) at (16.9,5.2) {$\BP$};

\draw [green,thick] (17,4.9) to (17.5,4.9);

\node (v6) at (20.36,5.2) {$=O(n^{-1}\log n)$};

\node (v6) at (16.9,4.2) {$\BP$};

\draw [red,thick] (17,3.9) to (17.5,3.9);

\node (v6) at (20.4,4.2) {$=O(n^{-(1-\alpha)/2})$};

\end{tikzpicture}
\caption{Illustration of proof for Case 2}
\end{figure}

\mn Otherwise, if $\big|y^{(1)}\big|>n/3$, our proof follows the same techniques on slightly different stopping times. Consider two neighborhoods: $B(0,\frac{n}{7})$ and $B(y,\frac{n}{7})$. It is easy to see that 
$$
\partial^{out}B\left(0,\frac{n}{7}\right) \cap \partial^{out}B\left(y,\frac{n}{7}\right) =\emptyset.
$$
Using the same argument as in Case 1, 
$$
\BP_y\left(\tau_{(0,1)}<\tau_{D_n} \right)=\hspace{-0.25 in}\sum_{w\in \partial^{out}B(y,\frac{n}{7})} \hspace{-0.25 in}\BP_y\left(\tau_{ \partial^{out}B(y,\frac{n}{7})}<\tau_{D_n}, \ S_{\tau_{ \partial^{out}B(y,\frac{n}{7})}}=w \right)\BP_w\left(\tau_{(0,1)}<\tau_{D_n} \right).
$$
Moreover for any $w\in \partial^{out}B(y,\frac{n}{7})$ the random walk starting at $w$ has to first visit $\partial^{out}B(0,\frac{n}{7})$ before ever reaches $(0,1)$. This implies that 
$$
\begin{aligned}
\BP_w\left(\tau_{(0,1)}<\tau_{D_n} \right)&=\hspace{-0.25 in}\sum_{z\in \partial^{out}B(0,\frac{n}{7})} \hspace{-0.25 in}\BP_w\left(\tau_{n/7}<\tau_{D_n}, \ S_{\tau_{n/7}}=z \right)\BP_z\left(\tau_{(0,1)}<\tau_{D_n} \right)\\
&\le \max_{z\in \partial^{out}B(0,\frac{n}{7})}\BP_z\left(\tau_{(0,1)}<\tau_{D_n} \right). 
\end{aligned}
$$
Thus we have 
\beq
\label{difference 8}
\BP_y\left(\tau_{(0,1)}<\tau_{D_n} \right)\le \BP_y\left(\tau_{ \partial^{out}B(y,\frac{n}{7})}<\tau_{D_n}\right) \max_{z\in \partial^{out}B(0,\frac{n}{7})}\BP_z\left(\tau_{(0,1)}<\tau_{D_n} \right). 
\eeq
Now since $y^{(2)}=-m\ge -2n^{\alpha_1}$, it is easy to see that 
$$
\text{rad}\left(B\left(y,\frac{n}{7}\right)\cap D_n\right)\ge \frac{n}{4}
$$
for all sufficiently large $n$. Thus by \eqref{difference 4_5} and  \eqref{difference 6}, there exists a constant $C<\infty$ independent to the choice of $n,m$ and $y$ satisfying Case 2, such that 
\beq
\label{difference 9}
 \BP_y\left(\tau_{ \partial^{out}B(y,\frac{n}{7})}<\tau_{D_n}\right)\le Cn^{-(1-\alpha_1)/2}. 
\eeq
and that 
\beq
\label{difference 10}
 \max_{z\in \partial^{out}B(0,\frac{n}{7})}\BP_z\left(\tau_{(0,1)}<\tau_{D_n} \right)\le \frac{C (\log n)^2 }{n}.
\eeq
Thus we also have 
\beq
\label{difference 11}
\begin{aligned}
\BP_y\left(\tau_{(0,1)}<\tau_{D_n} \right)\le C n^{-(3-\alpha_1)/2} (\log n)^2 \ll n^{-1}
\end{aligned}
\eeq
and thus our lemma hold when $y$ in Case 2 and the proof of Lemma \ref{lemma difference} is complete. 
\end{proof}
With Lemma \ref{lemma difference}, we have concluded the proof of Proposition  \ref{lemma coupling two}.  \qed

\bibliographystyle{plain}
\bibliography{eigen}

\begin{thebibliography}{1}

\bibitem{2017stationary}
T.~Antunovi{\'c} and E.~B. Procaccia.
\newblock Stationary eden model on cayley graphs.
\newblock {\em The Annals of Applied Probability}, 27(1):517--549, 2017.

\bibitem{Berger-Kagan-Procaccia}
N.~Berger, J.~J. Kagan, and E.~B. Procaccia.
\newblock Stretched {IDLA}.
\newblock {\em ALEA Lat. Am. J. Probab. Math. Stat.}, 11(1):471--481, 2014.

\bibitem{delmotte1999parabolic}
T.~Delmotte.
\newblock Parabolic harnack inequality and estimates of markov chains on
  graphs.
\newblock {\em Revista Matemtica Iberoamericana}, 15(1):181--232, 1999.

\bibitem{kesten1987hitting}
H.~Kesten.
\newblock Hitting probabilities of random walks on zd.
\newblock {\em Stochastic Processes and their Applications}, 25:165--184, 1987.

\bibitem{lawler2004beurling}
G.~F. Lawler and V.~Limic.
\newblock The beurling estimate for a class of random walks.
\newblock {\em Electron. J. Probab}, 9(27):846--861, 2004.

\bibitem{lawler2010random}
G.F. Lawler and V.~Limic.
\newblock {\em {Random walk: a modern introduction}}.
\newblock Cambridge Univ Pr, 2010.

\bibitem{procaccia2017sets}
E.~B. Procaccia and Y.~Zhang.
\newblock On sets of zero stationary harmonic measure.
\newblock {\em arXiv preprint arXiv:1711.01013}, 2017.

\bibitem{procaccia2017stationary}
E.~B. Procaccia and Y.~Zhang.
\newblock Stationary harmonic measure and dla in the upper half plane.
\newblock {\em arXiv preprint arXiv:1711.01011}, 2017.

\end{thebibliography}

\end{document}